\newtheorem{thm}{Theorem}[section]
\newtheorem{prop}[thm]{Proposition}
\newtheorem{lem}[thm]{Lemma}
\theoremstyle{definition}
\newtheorem{defn}[thm]{Definition}
\newtheorem{rem}[thm]{Remark}
\newcounter{labelflag} \setcounter{labelflag}{0}
\newcommand{\Label}[1]{
                       \ifnum\thelabelflag=1
                          \ifmmode
                             \makebox[0in][l]{\qquad\fbox{\rm#1}}
                          \else
                             \marginpar{\vspace{0.7\baselineskip}
                                        \hspace{-1.1\textwidth}
                                        \fbox{\rm#1}}
                          \fi
                       \fi
                       \label{#1} }
\newcommand{\be}{\begin{equation}}
\newcommand{\ee}{\end{equation}}
\newcommand{\R}{\mathbb{R}}
\newcommand{\N}{\mathbb{N}}
\def \calf {{  {\mathcal{F}} }}
\def \calftwo {{  {\mathcal{F}}_2  }}
\def \cala {{  {\mathcal{A}}  }}
\def \calb {{  {\mathcal{B}}  }}
\def \cald {{  {\mathcal{D}}  }}
\def \caln {{  {\mathcal{N}}  }}
\def \omegat { {\tilde{\omega} }}
\def \thonet {{  \theta_{1,t}  }}
\def \thtwot {{  \theta_{2,t}  }}
 \def  \ltwo {{L^2 (\R^n)}}
  \def  \hone {{H^1 (\R^n)}}
  \newcommand{\ii}{\int_{\R^n}}
\newcommand{\zto}{{z(\theta_{2, t} \omega)} }
\newcommand{\rh }{\rho ({\frac {|x|^2}{k^2}})}
\newcommand{\rhp }{\rho^\prime ({\frac {|x|^2}{k^2}})}
\begin{document}

\begin{titlepage}
\title{\Large  \bf  Sufficient and Necessary Criteria for 
Existence of Pullback Attractors  for Non-compact  Random Dynamical Systems }
\vspace{10mm}

\author{
Bixiang Wang
\vspace{5mm}\\
Department of Mathematics\\
 New Mexico Institute of Mining and
Technology  \\ Socorro,  NM~87801, USA  \vspace{3mm}\\
Email: bwang@nmt.edu }
\date{}
\end{titlepage}

\maketitle

\medskip

\begin{abstract}
We study pullback attractors of non-autonomous 
 non-compact dynamical  systems generated by
differential  equations  with non-autonomous 
  deterministic as well as stochastic forcing terms. 
  We first introduce the concepts of 
  pullback attractors and asymptotic 
  compactness for such systems.
We then prove a sufficient and necessary
 condition for existence of pullback attractors. 
 We also introduce the concept of 
 complete orbits for this sort of systems 
 and use  these special solutions to 
 characterize the structures of pullback 
 attractors. For random systems containing 
 periodic deterministic forcing terms, we show 
 the pullback attractors are  also periodic
 under certain conditions.
As an application of the abstract theory, we 
prove the existence of a unique pullback 
attractor for Reaction-Diffusion equations  
on $\R^n$ with both deterministic and random 
external terms.   Since Sobolev embeddings are 
not compact on unbounded domains,  the uniform 
estimates on the tails of solutions are employed to 
establish the asymptotic compactness of solutions.
\end{abstract}

{\bf Key words.}       Pullback  attractor;  
  periodic  random attractor;  random complete solution;  \\
 unbounded domain.

 {\bf MSC 2000.} Primary 35B40. Secondary 35B41, 37L30.

\baselineskip=1.3\baselineskip

\section{Introduction}
\setcounter{equation}{0}

 This paper is concerned  with   the theory of   pullback   attractors 
of non-autonomous non-compact  dynamical  systems generated by
differential  equations   with  both non-autonomous   
deterministic and stochastic
forcing terms.  We will prove a  sufficient   and necessary   
condition for existence of pullback attractors 
for  such systems.  We will
also  introduce   the concept of  $\cald$-complete orbits  for
 random  systems
and   use these special solutions to
characterize  the  structures of $\cald$-pullback attractors. 
In the deterministic case,  the characterization  of attractors 
by complete orbits is well known,
see, e.g., \cite{bal2, tem1} for autonomous systems
and \cite{carva1, carva2, che1} for non-autonomous systems.
However,   for random dynamical systems,
  no such characterization of attractors  is  available 
 in the literature. 
  For random  systems  containing   periodic  
     deterministic forcing,  we prove  
 the   pullback  attractors    are also
 periodic under some conditions.
 It seems   that    periodic
 random attractors  for PDEs 
 was first studied 
 in \cite{dua1}  where   a
 quasigeostrophic flow model
 was   considered. 
As an application of our abstract results, 
we will investigate   the pullback attractors of
 the  following Reaction-Diffusion equation defined on $\R^n$: 
\be
  \label{intro1}
  du +  (\lambda u - \Delta u) dt    =  f(x, u) dt  +  g(x, t) dt + h(x) d \omega,
 \ee
 with   initial data  $
 u(x, \tau) = u_\tau (x)$,  
 where  $t > \tau$ with  $\tau \in\R$,  
 $x \in \R^n$,    $\lambda$ is a  positive constant,
$g \in L^2_{loc}(\R, \ltwo)$    and   $h \in H^2(\R^n)\bigcap W^{2,p} (\R^n)$
for some $p \ge 2$.
$f$  is a smooth nonlinear function satisfying some dissipativeness 
 and growth conditions, and $\omega$
  is a  two-sided real-valued Wiener process on 
  a probability space. 
  The reader is also referred to 
  \cite{wan32} 
  for    existence of   periodic random attractors
  for   the Navier-Stokes equations, 
  and to \cite{wan33} for bifurcation of
  random periodic solutions  for parabolic equations.

  Stochastic  equations  like    \eqref{intro1} have  been used as  
   models to study the phenomena of stochastic resonance
    in biology and physics, where
  $g$ is a  time-dependent  input  signal and $\omega$
  is a Wiener process  used to test the impact of stochastic 
  fluctuations  on  $g$. 
  In this  respect,   we refer the reader to \cite{gamma1,
  moss1, tuckwell1, tuckwell2, wiesen1, wiesen2}
  for  more details,  where the authors have demonstrated 
  that, under certain circumstances,  the noise can 
  help the system to detect 
    weak   signals.  This is an interesting 
   phenomenon  because noise is  generally considered as 
   a nuisance  in the process of signal transmission. 
  In the present paper, we will  investigate   the long term 
  behavior of   stochastic  equations like \eqref{intro1},  
  especially the pullback attractors
  of the equation.

   The concept of pullback attractors for
   random dynamical systems was introduced by the authors
   in \cite{cra2, fla1, schm1}, and the existence of such attractors
   for compact systems
   was   established in  \cite{arn1, car1, car2, chu2, cra1, cra2,
   dua1,  fla1, huang1, kloe1, schm1} and the references therein.
   By a compact random dynamical system, we mean
   the system  has a compact pullback absorbing set.
   The  results   of \cite{cra2, fla1, schm1} can be used  to show the existence of
   pullback attractors for  many PDEs defined in bounded domains.
   However,  these  results do not apply
   to PDEs defined on unbounded domains or
    to weakly dissipative  PDEs even defined in bounded domains.
    The reason is simply because the dynamical systems 
    generated by the  equations  in such a case are no longer
    compact.
     To solve the problem,  the authors of  \cite{bat1}
     introduced    the concept
    of   asymptotic compactness
    for random dynamical systems which is an extension
     of    deterministic systems. 
    This type of  asymptotic compactness does not require 
     a system to have a compact pullback  absorbing set, 
     and hence it works for dissipative ODEs on infinite lattices 
      as well as   PDEs   on unbounded domains, 
      see e.g., \cite{bat1, bat2, wan2, wan3, wan31}.
    In this paper,   we will further extend 
      the concept of asymptotic compactness 
        to the case of differential  equations with
    both non-autonomous deterministic 
    and random forcing terms,
    and  as an example,  prove   
    the existence of pullback attractors  
     for equation
    \eqref{intro1}   defined on the 
    unbounded domain $\R^n$.

    In order to deal with  pullback   attractors 
    for equations like \eqref{intro1}, we  need to 
     introduce        the concept of cocycles
    over two parametric spaces $\Omega_1$   and $\Omega_2$
    (see Definition \ref{ds1} below),
    where $\Omega_1$  is responsible for the 
    non-autonomous deterministic forcing terms 
    and  $\Omega_2$ is responsible   for the stochastic 
     forcing terms.  This kind of cocycle 
    is an extension of the classical non-autonomous 
     deterministic cocycles  (see, e.g., \cite{car3, car4,  
     carva1, carva2, wan4}) as well as  the random cocycles
    (see, e.g., \cite{arn1, cra2, fla1, schm1}).
    More precisely,  if $\Omega_2$  is a singleton,  
    then Definition \ref{ds1} reduces  to the concept of  
    non-autonomous deterministic cocycles;   if $\Omega_1$
      is  a singleton instead,   the definition reduces to that of 
      random cocycles; if both $\Omega_1$  and $\Omega_2$  
      are singletons,    we get the
   autonomous dynamical systems as discussed 
   in \cite{bab1, hal1, sel1, tem1}.  
   In the case   where the non-autonomous 
   deterministic terms are
   periodic, we will examine the periodicity of the pullback attractors. 
   To this end,  we      introduce the concept of   periodic cocycles  in 
      Definition \ref{periodic-ds1}.
   To define a pullback   attractor,  as usual, 
   we need  to specify  its  attraction domain 
   which, in general,   is a collection  $\cald$ of some families of nonempty 
   sets.    In the next section,  we will extend the concepts of 
   $\cald$-pullback absorbing sets, $\cald$-pullback  
   asymptotic compactness and $\cald$-pullback attractors
     to the cocycles with two parametric spaces,  
      and then prove a sufficient and necessary 
      condition for existence of $\cald$-pullback 
      attractors for such cocycles (see Theorem \ref{att} below).
   When a cocycle  is $T$-periodic,   we   will   show    
   its  $\cald$-pullback attractor  is  also $T$-periodic  
   under some conditions (see 
      Theorems \ref{periodatt} and \ref{periodatt2}). 
    In order   to characterize the structures of pullback attractors, 
    we    introduce   the concept of $\cald$-complete orbit of
     cocycle. As demonstrated by
    Theorems \ref{att}  and \ref{periodatt},  the structure 
    of a $\cald$-pullback attractor  is fully determined by 
    the $\cald$-complete orbits.
    It seems that the
     concept of $\cald$-complete orbit 
     of  cocycle    was not 
     introduced in the literature before,  and was  not  used to
     describe   the 
      structures of $\cald$-pullback  random attractors.

Section 3 is devoted to applications of Theorems \ref{att},
 \ref{periodatt} and \ref{periodatt2}
to the cocycles generated by differential equations with 
deterministic and random forcing terms. Particularly,  we 
discuss how to choose the parametric space $\Omega_1$ 
for non-autonomous deterministic terms so that the solution 
operators of an equation can be formulated into the setting of 
cocycles presented in Section 2.  As we will see,  there are at 
least two options for choosing the space $\Omega_1$. We may
 take  $\Omega_1$   either
as the set of all  initial times (i.e., $\Omega_1 =\R$) or as the 
set of all translations of the deterministic terms. We further 
demonstrate  that the results
obtained by these two approaches are essentially   the same.
By appropriately choosing $\Omega_1$  and $\Omega_2$, we 
show  that the classical existence results of attractors are  
covered by Theorems \ref{att}  and \ref{periodatt} as special cases. 
 We will emphasize 
the    definitions of
$\cald$-complete orbits for different parametric spaces.

In the last section,  we prove the existence of $\cald$-pullback attractors  for
equation \eqref{intro1} on $\R^n$ by using the results obtained in Sections  2
and 3.  Since the domain $\R^n$ is unbounded and Sobolev embeddings 
are not compact in this case,  we have to appeal to the idea of uniform 
estimates on the tails of solutions to establish   the $\cald$-pullback  
asymptotic compactness of the equation. This idea  can be found in \cite{wan1}
for autonomous equations and in \cite{wan4} for non-autonomous 
deterministic equations and
in \cite{bat1, bat2} for random equations with only stochastic  forcing terms.
The existence of $\cald$-pullback attractor for
  equation \eqref{intro1} presented in Section 4  is an extension of
   those results from \cite{bat2, wan4}.

Throughout the rest of this paper,    we use  
$\| \cdot \|$ and $(\cdot, \cdot)$ to denote  the norm and the inner product
of $\ltwo$,  respectively.   The norm of  $L^p(\R^n)$ is written   as
 $\| \cdot\|_{p}$   and the norm of a       Banach space $X$  is written as    $\|\cdot\|_{X}$.
   The letters $c$ and $c_i$ ($i=1, 2, \ldots$)
are  generic positive constants  which may change their  values occasionally.

\section{Pullback Attractors  for Cocycles}
\setcounter{equation}{0}

In this section,  we discuss the theory of pullback attractors  
for random dynamical systems which is applicable to differential 
equations with both non-autonomous  deterministic and random terms.
This is an extension of the theory  
for random  systems  with only stochastic terms as developed 
in \cite{bat1, cra1, cra2, fla1, schm1}.
Our treatment  is closest to that of \cite{bat1, fla1}.
The reader is also referred to  
\cite{bab1, bal2, hal1, sel1, tem1}  for the attractors theory for
 autonomous dynamical systems.

Let $\Omega_1$ be a nonempty  set,   $(\Omega_2, \calftwo, P)$
be a probability space, and    $(X, d)$   be  a complete
separable  metric space with  Borel $\sigma$-algebra $\calb (X)$.
If $x \in X$ and  $B \subseteq X$, we write
$d(x, B) = \inf \{ d(x, b):  b \in B\}$. If $B \subseteq X$ and
$C\subseteq X$, we  write
$d(C, B) = \sup \{ d(x, B):   x \in C\}$
for  the   Hausdorff semi-metric between $C$ and $B$.
 Given $\varepsilon>0$ and $B\subseteq X$, the open $\varepsilon$-neighborhood of
$B$ in $X$  is denoted by
$$
\caln _\varepsilon (B) = \{ x \in X:  d(x, B) < \varepsilon \}.
$$

Let $2^X$ be the collection of all subsets of $X$. A set-valued mapping
$K: \Omega_1 \times \Omega_2 \to  2^X$   is called measurable
with respect to $\calftwo$
in $\Omega_2$
if  the value  $K(\omega_1, \omega_2)$
is a closed  nonempty subset  of $X$
for all $\omega_1 \in \Omega_1$ and $\omega_2 \in \Omega_2$,
 and  the mapping
$ \omega_2 \in  \Omega_2
 \to d(x, K(\omega_1, \omega_2) )$
is $(  \calftwo, \ \calb(\R) )$-measurable
for every  fixed $x \in X$ and $\omega_1 \in \Omega_1$.
In this case, we also say  the family,
$\{K(\omega_1, \omega_2): \omega_1 \in \Omega_1, \omega_2 \in \Omega_2 \}$,
consisting of  all values of $K$,   is measurable
with respect to $\calftwo$
 in $\Omega_2$.

Suppose  that there are two groups
$\{\theta_1 (t)\}_{t \in \R} $ and
$\{\theta_2 (t)\}_{t \in \R} $ acting on $\Omega_1$ and $\Omega_2$,
respectively. More precisely,   $\theta_1 : \R \times \Omega_1 \to \Omega_1$
  is a    mapping
   such that $\theta_1(0, \cdot) $ is the
identity on $\Omega_1$, $\theta_1 (s+t, \cdot) = \theta_1 (t, \cdot) \circ \theta_1 (s, \cdot)$ for all
$t, s \in \R$.
Similarly,   $\theta_2 : \R \times \Omega_2 \to \Omega_2$
is a    $(\calb (\R) \times \calftwo, \calftwo)$-measurable mapping
 such that $\theta_2(0,\cdot) $ is the
identity on $\Omega_2$, $\theta_2 (s+t,\cdot) = \theta_2 (t,\cdot) \circ \theta_2 (s,\cdot)$ for all
$t, s \in \R$
and $P\theta_2 (t,\cdot)  =P$
for all $t \in \R$.
For convenience, we often write
$\theta_1 (t, \cdot)$ and
$\theta_2 (t, \cdot)$ as $\thonet$
and $\thtwot$, respectively.
In the sequel, we will call both
$(\Omega_1,  \{\thonet\}_{t \in \R})$
and
$(\Omega_2, \calftwo, P,  \{\thtwot\}_{t \in \R})$
a parametric   dynamical system.

\begin{defn} \label{ds1}
 Let
$(\Omega_1,  \{\thonet\}_{t \in \R})$
and
$(\Omega_2, \calftwo, P,  \{\thtwot\}_{t \in \R})$
be parametric  dynamical systems.
A mapping $\Phi$: $ \R^+ \times \Omega_1 \times \Omega_2 \times X
\to X$ is called a continuous  cocycle on $X$
over $(\Omega_1,  \{\thonet\}_{t \in \R})$
and
$(\Omega_2, \calftwo, P,  \{\thtwot\}_{t \in \R})$
if   for all
  $\omega_1\in \Omega_1$,
  $\omega_2 \in   \Omega_2 $
  and    $t, \tau \in \R^+$,  the following conditions are satisfied:
\begin{itemize}
\item [(i)]   $\Phi (\cdot, \omega_1, \cdot, \cdot): \R ^+ \times \Omega_2 \times X
\to X$ is
 $(\calb (\R^+)   \times \calftwo \times \calb (X), \
\calb(X))$-measurable;

\item[(ii)]    $\Phi(0, \omega_1, \omega_2, \cdot) $ is the identity on $X$;

\item[(iii)]    $\Phi(t+\tau, \omega_1, \omega_2, \cdot) = \Phi(t, \theta_{1,\tau} 
\omega_1,  \theta_{2,\tau} \omega_2, \cdot)
 \circ \Phi(\tau, \omega_1, \omega_2, \cdot)$;

\item[(iv)]    $\Phi(t, \omega_1, \omega_2,  \cdot): X \to  X$ is continuous.
    \end{itemize}
\end{defn}

In this section, we always assume that
$\Phi$ is a continuous  cocycle on $X$
over $(\Omega_1,  \{\thonet\}_{t \in \R})$
and
$(\Omega_2, \calftwo, P,  \{\thtwot\}_{t \in \R})$.
The purpose of this section is to study the dynamics of   continuous cocycles  on  $X$,
including the periodic  ones. The periodic cocycles are defined as follows.

\begin{defn}
\label{periodic-ds1}
Let  $T$ be a positive number and
$\Phi$  be  a continuous  cocycle on $X$
over $(\Omega_1,  \{\thonet\}_{t \in \R})$
and
$(\Omega_2, \calftwo, P,  \{\thtwot\}_{t \in \R})$.
We say $\Phi$ is a  continuous periodic  cocycle  on $X$ with period $T$
if for every $t\ge 0$, $\omega_1 \in \Omega_1$  and $\omega_2 \in \Omega_2$,
there holds
$$
\Phi(t, \theta_{1, T} \omega_1, \omega_2, \cdot)
= \Phi(t, \omega_1,  \omega_2, \cdot ).
$$
\end{defn}

Let $D$ be   a family of some subsets of $X$
which is  parametrized by $(\omega_1, \omega_2) \in \Omega_1 \times \Omega_2$:
  $D =\{D(\omega_1, \omega_2 ) \subseteq X:    \omega_1 \in \Omega_1, \
  \omega_2 \in \Omega_2 \}  $. Then   we can associate with $D$
a set-valued map $f_D: \Omega_1 \times \Omega_2
\to 2^X$   such that
$f_D(\omega_1, \omega_2) = D(\omega_1, \omega_2)$
for all  $\omega_1 \in \Omega_1$ and $
  \omega_2 \in \Omega_2$.
  It is clear that  $D = f_D (\Omega_1 \times \Omega_2)$.
  We say two  such families  $B$  and $D$
    are equal if and only if the corresponding maps
  $f_B$ and $f_D$   are equal. In other words, we have the  following definition.

  \begin{defn}
  \label{familyequal}
  Let $B$ and $D$
be  two families   of    subsets of $X$ which  are  
parametrized by $(\omega_1, \omega_2) \in \Omega_1 \times \Omega_2$.
Then  $B$ and $D$  are said to be equal      if
$B(\omega_1, \omega_2)
= D(\omega_1, \omega_2)$ for all $\omega_1 \in \Omega_1$
and $\omega_2 \in \Omega_2$.
\end{defn}

In the sequel, we use  $\cald$ to denote
 a  collection  of  some families of  nonempty subsets of $X$:
\be
\label{defcald}
{\cald} = \{ D =\{\emptyset\neq  D(\omega_1, \omega_2 ) \subseteq X:
  \omega_1 \in \Omega_1, \
  \omega_2 \in \Omega_2\}: f_D
  \mbox{ satisfies  some  conditions} \}.
\ee
Note  that a family $D$ belongs to $\cald$  if and only   the 
corresponding map $f_D$  satisfies  certain conditions  rather than
the image $f_D(\Omega_1 \times \Omega_2)$  of $f_D$.
We will discuss existence of   attractors for $\Phi$   which pullback  attract
all elements of $\cald$. To this end, we require that the collection
$\cald$  is neighborhood closed in the following sense.

\begin{defn} 
\label{defepsneigh1}
A collection $\cald$ of some families 
of nonempty subsets of $X$
is said  to be   neighborhood closed if   for each
$D=\{D(\omega_1, \omega_2): 
\omega_1 \in \Omega_1, \omega_2 \in \Omega_2 \}
\in \cald$,   there exists a positive number
$\varepsilon$ depending on $D$ such that  the family
\be\label{defepsneigh2}
 \{ {B}(\omega_1, \omega_2) :
 {B}(\omega_1, \omega_2) \mbox{ is a  nonempty subset of }
 \caln_\varepsilon ( D (\omega_1, \omega_2) ),  \forall \
 \omega_1 \in \Omega_1,  \forall\  \omega_2
\in  \Omega_2\}
\ee
also belongs to $\cald$.
\end{defn}

 Note that the   neighborhood closedness  of $\cald$
 implies    for each 
$D\in \cald$,   
\be
\label{inclu1}
 \{ \tilde{D}(\omega_1, \omega_2) :
 \tilde{D}(\omega_1, \omega_2) \mbox{ is a  nonempty subset of }
  D (\omega_1, \omega_2 ),  \forall \
 \omega_1 \in \Omega_1,  \forall\  \omega_2
\in  \Omega_2\} \in \cald.
\ee 
A collection $\cald$ satisfying  \eqref{inclu1} 
 is said to be inclusion-closed in the literature, see, e.g., \cite{fla1}.
The concept of  inclusion-closedness  of  $\cald$  is used to find  a sufficient
condition for existence of a $\cald$-pullback attractor. In the present  paper,
we want to investigate  both sufficient and necessary  criteria  for existence of
such attractors.  As  we will see later,  we need  the concept of 
neighborhood closedness  of $\cald$ 
 in order to  derive a necessary condition for  the  existence of 
$\cald$-pullback attractors.
It is evident that  the  neighborhood  closedness of $\cald$  implies 
the  inclusion-closedness of $\cald$.

When we study the dynamics of periodic cocycles, we require that
the collection $\cald$ is closed under translations as defined below.
Let $T \in \R$    and $D=\{D(\omega_1, \omega_2):
\omega_1 \in \Omega_1, \omega_2 \in \Omega_2 \} \in \cald$.
For each $ \omega_1 \in \Omega_1$ and $\omega_2 \in \Omega_2$,  define a set
$D_T(\omega_1, \omega_2)$ by
\be\label{DTfamily1}
D_T(\omega_1, \omega_2) = D(\theta_{1, T} \omega_1,  \omega_2) .
\ee  Let  $D_T$ be the collection of all $ D_T(\omega_1, \omega_2)$
as defined by \eqref{DTfamily1}, that is,
\be
\label{DTfamily2}
D_T = \{ D_T(\omega_1, \omega_2): D_T(\omega_1, \omega_2) \mbox{ is given by }
 \eqref{DTfamily1}, \omega_1 \in \Omega_1,
\omega_2 \in \Omega_2
\}.
\ee
Then the family $D_T$ given by \eqref{DTfamily2} is called
the  $T$-translation of
$D \in \cald$. Note that, as sets, $D_T$  and $D$ are equal for any
$D \in \cald$. However, in general,  $D_T$  and $D$ are not equal
in terms of Definition \ref{familyequal}.

\begin{defn} 
\label{defTlation}
Suppose   $T  \in \R$     and
  $\cald$  is  a collection  of some families of nonempty subsets of $X$
  as given by \eqref{defcald}.
  Let $\cald_T$ be the collection of
    $T$-translations of  all elements of $\cald$, that is,
  $$
  \cald_T = \{ D_T: D_T \mbox{ is the }  T   \mbox{-translation of } D,
  \  D \in \cald \}.
  $$
  Then $\cald_T$ is called the $T$-translation of $\cald$.
  If $\cald_T \subseteq \cald$,   we say $\cald$ is
      $T$-translation   closed.
      If  $\cald_T =  \cald$,   we say $\cald$ is
      $T$-translation  invariant.
\end{defn}

Regarding $T$-translations, we have the following lemma.

\begin{lem}
\label{Tlation1}
Suppose   $T  \in \R$     and
  $\cald$  is  a collection  of some families of nonempty subsets of $X$
  as given by \eqref{defcald}. 
  Then $\cald_{-T} \subseteq \cald$ if and only if
  $\cald \subseteq \cald_T$. 
  \end{lem}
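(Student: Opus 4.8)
The plan is to observe that translation by $T$ and translation by $-T$ are mutually inverse operations on families of subsets of $X$, so that the asserted equivalence is a purely formal consequence of the group law for $\{\theta_{1,t}\}_{t\in\R}$. First I would record the key identity: for \emph{every} family $D=\{D(\omega_1,\omega_2):\omega_1\in\Omega_1,\omega_2\in\Omega_2\}$ of nonempty subsets of $X$ one has $(D_{-T})_T=D$ and $(D_T)_{-T}=D$ in the sense of Definition \ref{familyequal}. Indeed, by \eqref{DTfamily1} together with $\theta_{1,-T}\circ\theta_{1,T}=\theta_{1,0}=\mathrm{id}_{\Omega_1}$ and $\theta_{1,T}\circ\theta_{1,-T}=\theta_{1,0}=\mathrm{id}_{\Omega_1}$ (both of which follow from $\theta_1(s+t,\cdot)=\theta_1(t,\cdot)\circ\theta_1(s,\cdot)$ and $\theta_1(0,\cdot)=\mathrm{id}_{\Omega_1}$), for all $\omega_1\in\Omega_1$ and $\omega_2\in\Omega_2$ we have
\[
(D_{-T})_T(\omega_1,\omega_2)=D_{-T}(\theta_{1,T}\omega_1,\omega_2)=D(\theta_{1,-T}\theta_{1,T}\omega_1,\omega_2)=D(\omega_1,\omega_2),
\]
and symmetrically $(D_T)_{-T}=D$; note also that $D_T$ and $D_{-T}$ again consist of nonempty subsets of $X$, so they are admissible families.

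Granting this, each implication is one line. For the forward direction, I would assume $\cald_{-T}\subseteq\cald$ and take an arbitrary $D\in\cald$; then $D_{-T}\in\cald_{-T}\subseteq\cald$, and since $D=(D_{-T})_T$ is the $T$-translation of the element $D_{-T}\in\cald$, Definition \ref{defTlation} gives $D\in\cald_T$, whence $\cald\subseteq\cald_T$. For the converse, I would assume $\cald\subseteq\cald_T$ and take an arbitrary $E\in\cald_{-T}$, so that $E=D_{-T}$ for some $D\in\cald$; since $\cald\subseteq\cald_T$ we may write $D=F_T$ with $F\in\cald$, and then $E=D_{-T}=(F_T)_{-T}=F\in\cald$, whence $\cald_{-T}\subseteq\cald$.

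I do not anticipate any genuine obstacle. The only point requiring a little care is to keep the two notions of equality of families apart --- equality as indexed families (Definition \ref{familyequal}) versus equality merely as sets (cf.\ the remark after \eqref{DTfamily2}) --- but all the computations above are carried out pointwise in $(\omega_1,\omega_2)$, which is exactly Definition \ref{familyequal}, and that is the notion used in the definition of $\cald_T$. As a byproduct, applying the lemma once with $T$ and once with $-T$ shows that $\cald$ is simultaneously $T$-translation closed and $(-T)$-translation closed if and only if it is $T$-translation invariant.
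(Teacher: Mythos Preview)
Your proof is correct and follows essentially the same approach as the paper: both establish the identity $(D_T)_{-T}=(D_{-T})_T=D$ and then chase the two inclusions in one line each, with the same use of Definition~\ref{defTlation}. Your version is slightly more explicit about deriving the identity from the group law for $\{\theta_{1,t}\}$, but otherwise matches the paper step for step.
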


\begin{proof}
It follows from Definition \ref{defTlation} that, for every $B \in \cald$,
\be\label{ptlation2}
(B_T)_{-T} = ( B_{-T})_T = B,
\ee
where $B_T$   and $B_{-T}$ are $T$-translation and $-T$-translation
of $B$, respectively.
Note that \eqref{ptlation2}
should   be understood in the sense of Definition
\ref{familyequal}.
Suppose $\cald_{-T} \subseteq \cald$ and  $B \in \cald$.
Then we have $B_{-T} \in \cald$ and hence 
$(B_{-T})_T \in \cald_T$, which along with
\eqref{ptlation2} shows that
$B \in  \cald_T$.  Since $B$ is an arbitrary element
of $\cald$ we get 
$\cald \subseteq \cald_T$. 

We now suppose $\cald \subseteq \cald_T$ and
$B \in \cald_{-T}$.  Then there exists
$E \in \cald$   such that $B = E_{-T}$.
By $E \in \cald$  and $\cald \subseteq \cald_T$,  we find that there exists
$G \in \cald$   such that $E= G_T$.
It follows from
\eqref{ptlation2} that
$B= E_{-T} = (G_T)_{-T} =G$. 
Since $ G \in \cald$ we  have
 $B \in \cald$. 
 Since $B$ is an arbitrary element
 of $\cald_{-T}$ we get 
     $\cald_{-T} \subseteq \cald$.
     This completes the proof.
\end{proof}

As an immediate consequence of Lemma \ref{Tlation1} and Definition
\ref{defTlation}, we have the following invariance criterion for $\cald$.
\begin{lem}
\label{Tlation2}
Suppose   $T  \in \R$     and
  $\cald$  is  a collection  of some families of nonempty subsets of $X$
  as given by \eqref{defcald}. Then $\cald$ is $T$-translation  invariant
  if    and only if $\cald$ is both $T$-translation closed   and $-T$-translation 
  closed. 
  \end{lem}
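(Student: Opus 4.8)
The plan is to deduce Lemma~\ref{Tlation2} directly from Lemma~\ref{Tlation1} together with the definitions in Definition~\ref{defTlation}; no new construction is needed. Recall the terminology being unwound: $\cald$ is $T$-translation invariant means $\cald_T = \cald$; it is $T$-translation closed means $\cald_T \subseteq \cald$; and it is $-T$-translation closed means $\cald_{-T} \subseteq \cald$. So the whole statement is really just the elementary fact that, for two collections, $\cald_T = \cald$ holds iff both $\cald_T \subseteq \cald$ and $\cald \subseteq \cald_T$ hold, with the second inclusion rephrased via Lemma~\ref{Tlation1}.

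For the forward implication I would assume $\cald$ is $T$-translation invariant, i.e. $\cald_T = \cald$. Then trivially $\cald_T \subseteq \cald$, so $\cald$ is $T$-translation closed. Also $\cald \subseteq \cald_T$, and by Lemma~\ref{Tlation1} (used in the direction ``$\cald \subseteq \cald_T$ implies $\cald_{-T} \subseteq \cald$'') this gives $\cald_{-T} \subseteq \cald$, i.e. $\cald$ is $-T$-translation closed.

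For the converse I would assume $\cald$ is both $T$-translation closed and $-T$-translation closed, that is, $\cald_T \subseteq \cald$ and $\cald_{-T} \subseteq \cald$. Applying Lemma~\ref{Tlation1} to the hypothesis $\cald_{-T} \subseteq \cald$ yields $\cald \subseteq \cald_T$. Combining this with $\cald_T \subseteq \cald$ gives $\cald_T = \cald$, which is precisely $T$-translation invariance, completing the argument.

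I do not expect any real obstacle here. The only points requiring a little care are applying Lemma~\ref{Tlation1} in the correct orientation (it is an ``if and only if'', so both directions are freely available) and keeping in mind that the equality $\cald_T = \cald$, and more generally equalities of families, are to be read in the sense of Definition~\ref{familyequal}, exactly as already noted in the proof of Lemma~\ref{Tlation1}.
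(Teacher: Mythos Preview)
Your proposal is correct and follows essentially the same approach as the paper, which states that the lemma is an immediate consequence of Lemma~\ref{Tlation1} and Definition~\ref{defTlation}. You have spelled out exactly those details, using Lemma~\ref{Tlation1} in both directions to pass between $\cald_{-T}\subseteq\cald$ and $\cald\subseteq\cald_T$.
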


 For later purpose, we need the concept of a complete orbit
 of $\Phi$ which is given below.

\begin{defn}
\label{comporbit}
 Let $\cald$ be a collection of some families of
 nonempty  subsets of $X$. A mapping $\psi: \R \times \Omega_1 \times \Omega_2$
 $\to X$ is called a complete orbit of $\Phi$ if for every $\tau \in \R$, $t \ge 0$,
 $\omega_1 \in \Omega_1$ and $\omega_2 \in \Omega_2$,  the following holds:
\be
\label{comporbit1}
 \Phi (t, \theta_{1, \tau} \omega_1, \theta_{2, \tau} \omega_2,
  \psi (\tau, \omega_1, \omega_2) )
  = \psi (t + \tau, \omega_1, \omega_2 ).
\ee
 If, in  addition,    there exists $D=\{D(\omega_1, \omega_2): \omega_1 \in \Omega,
 \omega_2 \in \Omega_2 \}\in \cald$ such that
 $\psi(t, \omega_1, \omega_2)$ belongs to
 $D(\theta_{1,t} \omega_1, \theta_{2, t} \omega_2 )$
 for every  $t \in \R$, $\omega_1 \in \Omega_1$
 and $\omega_2 \in \Omega_2$, then $\psi$ is called a
 $\cald$-complete orbit of $\Phi$.
 \end{defn}

The following are  the concepts regarding invariance
of a family of subsets of $X$.

\begin{defn}
Let $B=\{B(\omega_1, \omega_2): \omega_1 \in \Omega_1, \ \omega_2  \in \Omega_2\}$
be a family of nonempty subsets of $X$.
We say that $B$ is positively  invariant under $\Phi$ if
$$ \Phi(t, \omega_1, \omega_2, B(\omega_1, \omega_2)   )
\subseteq B (\theta_{1,t} \omega_1, \theta_{2,t} \omega_2
), \ \mbox{ for all } t \ge 0, \  \omega_1 \in \Omega_1, \
\omega_2 \in \Omega_2.
$$
 We say that $B$ is    invariant under $\Phi$ if
$$ \Phi(t, \omega_1, \omega_2, B(\omega_1, \omega_2)   )
= B (\theta_{1,t} \omega_1, \theta_{2,t} \omega_2
), \ \mbox{ for all } t \ge 0, \  \omega_1 \in \Omega_1, \
\omega_2 \in \Omega_2.
$$
We say that $B$ is  quasi-invariant  under $\Phi$ if
 for each map $y: \Omega_1 \times \Omega_2 \to  X$   with
 $y(\omega_1, \omega_2) \in B( \omega_1, \omega_2)$
for  all   $\omega_1 \in \Omega_1$
and $\omega_2 \in \Omega_2$,  there exists a complete
orbit $\psi$ such that $\psi(0, \omega_1, \omega_2)
= y(\omega_1, \omega_2)$
and $\psi (t, \omega_1, \omega_2) \in B( \theta_{1, t} \omega_1,
\theta_{2, t} \omega_2 )$ for all $t \in \R$,
$\omega_1 \in \Omega_1$
and $\omega_2 \in \Omega_2$.
\end{defn}

It can be proved that   a family of nonempty subsets of $X$
is invariant under $\Phi$ if and only if the family is
positively invariant and quasi-invariant, which is given by the
following lemma.

\begin{lem}
\label{posquainv}
Let $B=\{B(\omega_1, \omega_2): \omega_1 \in \Omega_1, \ \omega_2  \in \Omega_2\}$
be a family of nonempty subsets of $X$.

(i)  If $B$ is quasi-invariant under $\Phi$,  then
\be
\label{pposquainv1}
 B (\theta_{1,t} \omega_1, \theta_{2,t} \omega_2)
\subseteq  \Phi(t, \omega_1, \omega_2, B(\omega_1, \omega_2)   )
, \ \mbox{ for all } t \ge 0, \  \omega_1 \in \Omega_1, \
\omega_2 \in \Omega_2.
\ee

(ii) If $B$ is invariant under $\Phi$,  then $B$ is quasi-invariant  under $\Phi$.

(iii)    $B$ is
invariant under $\Phi$ if and only if $B$  is
positively invariant and quasi-invariant.
\end{lem}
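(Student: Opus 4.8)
The plan is to prove (i) and (ii) separately, after which (iii) follows at once. Throughout I use that every fibre $B(\omega_1,\omega_2)$ is nonempty, so selections $\Omega_1\times\Omega_2\to X$ picking a point of each fibre exist by the axiom of choice.

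\emph{Proof of (i).} Fix $t\ge 0$, $\omega_1\in\Omega_1$, $\omega_2\in\Omega_2$, and let $z\in B(\theta_{1,t}\omega_1,\theta_{2,t}\omega_2)$; I must produce $y\in B(\omega_1,\omega_2)$ with $\Phi(t,\omega_1,\omega_2,y)=z$. First choose any selection $y_0$ of $B$, then redefine $y_0$ at the single point $(\theta_{1,t}\omega_1,\theta_{2,t}\omega_2)$ to equal $z$; this is still a legitimate selection because $z$ lies in that fibre. Quasi-invariance supplies a complete orbit $\psi$ with $\psi(0,\cdot,\cdot)=y_0$ and $\psi(s,\omega_1',\omega_2')\in B(\theta_{1,s}\omega_1',\theta_{2,s}\omega_2')$ for all $s,\omega_1',\omega_2'$. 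Evaluating the complete-orbit identity \eqref{comporbit1} at base point $(\theta_{1,t}\omega_1,\theta_{2,t}\omega_2)$ with elapsed time $t$ and shift $-t$, and using $\theta_{i,-t}\theta_{i,t}=\mathrm{id}$, gives $\Phi(t,\omega_1,\omega_2,\psi(-t,\theta_{1,t}\omega_1,\theta_{2,t}\omega_2))=\psi(0,\theta_{1,t}\omega_1,\theta_{2,t}\omega_2)=z$, while $\psi(-t,\theta_{1,t}\omega_1,\theta_{2,t}\omega_2)\in B(\omega_1,\omega_2)$. This is exactly \eqref{pposquainv1}.

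\emph{Proof of (ii).} Assume $B$ invariant and let $y$ be a selection of $B$. For $t\ge 0$ set $\psi(t,\omega_1,\omega_2)=\Phi(t,\omega_1,\omega_2,y(\omega_1,\omega_2))$; invariance puts this in $B(\theta_{1,t}\omega_1,\theta_{2,t}\omega_2)$. For negative times, fix $(\omega_1,\omega_2)$ and use the surjectivity contained in invariance, namely $\Phi(1,\theta_{1,-(n+1)}\omega_1,\theta_{2,-(n+1)}\omega_2,B(\theta_{1,-(n+1)}\omega_1,\theta_{2,-(n+1)}\omega_2))=B(\theta_{1,-n}\omega_1,\theta_{2,-n}\omega_2)$, together with the axiom of choice, to build a sequence $x_n\in B(\theta_{1,-n}\omega_1,\theta_{2,-n}\omega_2)$ with $x_0=y(\omega_1,\omega_2)$ and $\Phi(1,\theta_{1,-(n+1)}\omega_1,\theta_{2,-(n+1)}\omega_2,x_{n+1})=x_n$; the cocycle property of $\Phi$ then upgrades this to $\Phi(k,\theta_{1,-(n+k)}\omega_1,\theta_{2,-(n+k)}\omega_2,x_{n+k})=x_n$ for all $k\ge 0$. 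For $t<0$ set $n=\lceil-t\rceil$, so $t+n\in[0,1)$, and $\psi(t,\omega_1,\omega_2)=\Phi(t+n,\theta_{1,-n}\omega_1,\theta_{2,-n}\omega_2,x_n)$, which by invariance lands in $B(\theta_{1,t}\omega_1,\theta_{2,t}\omega_2)$. It remains to verify that this piecewise-defined $\psi$ obeys \eqref{comporbit1} for all $t\ge 0$, $\tau\in\R$; I would split into $\tau\ge 0$, and $\tau<0$ with either $t+\tau\ge 0$ or $t+\tau<0$, reducing each case to the cocycle property of $\Phi$ (rewriting base points via the group law) and the telescoping identities for the $x_n$. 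Since $\psi(0,\cdot,\cdot)=y$, this shows $B$ is quasi-invariant.

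\emph{Proof of (iii), and the main difficulty.} If $B$ is invariant it is trivially positively invariant, and quasi-invariant by (ii); conversely, if $B$ is positively invariant then $\Phi(t,\omega_1,\omega_2,B(\omega_1,\omega_2))\subseteq B(\theta_{1,t}\omega_1,\theta_{2,t}\omega_2)$, while quasi-invariance gives the reverse inclusion \eqref{pposquainv1} by (i), so the two sides coincide. The main obstacle is the backward construction in (ii): once the countably many choices defining $(x_n)$ are made, one must carefully track the shifts $\theta_{i,-n}$ and the composition law of $\Phi$ to confirm that $\psi$ genuinely satisfies \eqref{comporbit1}; the bookkeeping in the subcase $\tau<0$ with $t+\tau<0$, where $\psi(\tau,\cdot)$ and $\psi(t+\tau,\cdot)$ are built from $x_n$ and $x_m$ with $m=\lceil-(t+\tau)\rceil\le n$, is the most delicate point.
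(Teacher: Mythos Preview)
Your proposal is correct and follows essentially the same approach as the paper: part (i) uses a complete orbit through the target point and evaluates at time $-t$; part (ii) builds a backward chain $x_n\in B(\theta_{1,-n}\omega_1,\theta_{2,-n}\omega_2)$ by repeatedly applying surjectivity with step $1$, then flows forward; part (iii) is the trivial combination. The one presentational difference worth noting is that the paper, rather than fixing $n=\lceil -t\rceil$ and splitting into cases, first proves the telescoping identity $\Phi(t,\theta_{1,-n}\omega_1,\theta_{2,-n}\omega_2,x_n)=\Phi(t-1,\theta_{1,-(n-1)}\omega_1,\theta_{2,-(n-1)}\omega_2,x_{n-1})$ for $t\ge 1$ and then defines $\psi(t,\omega_1,\omega_2)$ as the \emph{common value} of $\Phi(t+n,\theta_{1,-n}\omega_1,\theta_{2,-n}\omega_2,x_n)$ over all $n\ge -t$; this makes the verification of \eqref{comporbit1} a single line and eliminates the case analysis you flag as the most delicate point.
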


\begin{proof} (i).
Suppose $B$ is quasi-invariant under $\Phi$.    We want to show
\eqref{pposquainv1}.
Let $t\ge 0$ be fixed.
For each  $\omega_1 \in \Omega_1$
and $\omega_2 \in \Omega_2$, let
$y(\omega_1, \omega_2)$ be an arbitrary
element in $ B(\omega_1, \omega_2)$.
Then the quasi-invariance of $B$ implies that
there exists a complete
orbit $\psi$ such that $\psi(0, \omega_1, \omega_2)
= y(\omega_1, \omega_2)$
and $\psi (s, \omega_1, \omega_2) \in B( \theta_{1, s} \omega_1,
\theta_{2, s} \omega_2 )$ for all $s \in \R$, for all $\omega_1  \in \Omega_1$
and $\omega_2 \in \Omega_2$.
Furthermore,  for each   $\omega_1 \in \Omega_1$
and $\omega_2 \in \Omega_2$, we have
$$
\Phi(t, \theta_{1,-t} \omega_1, \theta_{2,-t} \omega_2,
\psi(-t, \omega_1, \omega_2)) = \psi(0, \omega_1, \omega_2),
$$
which implies that
 \be
\label{pposquainv2}
\Phi(t,   \omega_1,   \omega_2,
\psi(-t, \theta_{1, t} \omega_1, \theta_{2, t} \omega_2))
 = \psi(0, \theta_{1, t} \omega_1, \theta_{1, t} \omega_2).
\ee
Since
$\psi(0,  \theta_{1, t}\omega_1,  \theta_{2, t}\omega_2)
= y( \theta_{1, t}\omega_1,  \theta_{1, t}\omega_2)$
and $\psi (-t,  \theta_{1, t} \omega_1, \theta_{2, t} \omega_2
 ) \in B(  \omega_1,
  \omega_2 )$,  we
  obtain \eqref{pposquainv1}
  from \eqref{pposquainv2}
  immediately.
  
  (ii).   Suppose  that
$B$ is invariant and    $y: \Omega_1 \times \Omega_2 \to  X$  is
 a mapping such that
 $y(\omega_1, \omega_2) \in B( \omega_1, \omega_2)$
for all   $\omega_1 \in \Omega_1$
and $\omega_2 \in \Omega_2$.  Then we need to show
that  there exists a complete
orbit $\psi$ such that $\psi(0, \omega_1, \omega_2)
= y(\omega_1, \omega_2)$
and $\psi (t, \omega_1, \omega_2) \in B( \theta_{1, t} \omega_1,
\theta_{2, t} \omega_2 )$ for all $t \in \R$,
$\omega_1 \in \Omega_1$
and $\omega_2 \in \Omega_2$.
Fix $\omega_1 \in \Omega_1$  and $\omega_2 \in \Omega_2$. We first construct
 a mapping $\psi(\cdot, \omega_1, \omega_2): \R \to X$
 with desired properties.
 By the invariance of $B$ we have, for all $t \ge 0$, ${\tilde{\omega}}_1 \in \Omega_1$
 and ${\tilde{\omega}}  _2 \in \Omega_2$,
 \be
 \label{pposquainv3}
 \Phi (t, \theta_{1, -t} {\tilde{\omega}}_1, \theta_{2, -t}  {\tilde{\omega}}   _2,
 B(\theta_{1, -t}  {\tilde{\omega}}   _1, \theta_{2, -t}  {\tilde{\omega}}  _2) )
 =B(  {\tilde{\omega}}  _1,   {\tilde{\omega}}  _2).
 \ee
 Since $y(\omega_1,  \omega_2) \in B(\omega_1,  \omega_2)$,
 it follows from \eqref{pposquainv3} with $t =1$  that
 there exists $z_1 \in B(\theta_{1, -1} \omega_1, \theta_{2, -1} \omega_2)$
 such that
 \be
 \label{pposquainv4}
 \Phi (1, \theta_{1, -1} \omega_1, \theta_{2, -1} \omega_2,
 z_1 ) =y( \omega_1,  \omega_2).
 \ee
 By \eqref{pposquainv4} and the cocycle property of $\Phi$
  we get that, for all $t \ge 1$,
 $$
 \Phi (t, \theta_{1, -1} \omega_1, \theta_{2, -1} \omega_2,
 z_1 )
 = \Phi (t-1,  \omega_1,   \omega_2,
 \Phi (1, \theta_{1, -1} \omega_1, \theta_{2, -1} \omega_2,
 z_1 ) )
 = \Phi (t-1,  \omega_1,   \omega_2, y(\omega_1, \omega_2)).
 $$
 Note that  the invariance of $B$  implies
 $\Phi (t, \theta_{1, -1} \omega_1, \theta_{2, -1} \omega_2,
 z_1 ) \in B(\theta_{1, t-1} \omega_1, \theta_{2, t-1} \omega_2)$.
 Replacing $ {\tilde{\omega}}  _1$   and $ {\tilde{\omega}}   _2$ in \eqref{pposquainv3}
 by $\theta_{1, -1} \omega_1$   and $\theta_{2, -1}\omega_2$,
 respectively, we get, for all $ t \ge 0$,
 \be
 \label{pposquainv5}
 \Phi (t, \theta_{1, -t-1} \omega_1, \theta_{2, -t-1} \omega_2,
 B(\theta_{1, -t-1} \omega_1, \theta_{2, -t-1} \omega_2) )
 =B( \theta_{1, -1}\omega_1,  \theta_{2, -1}\omega_2).
 \ee
 Since $z_1  \in B(\theta_{1, -1}\omega_1,  \theta_{2, -1}\omega_2)$,
   from \eqref{pposquainv5} with $t =1$  we  find  that
 there exists $z_2 \in B(\theta_{1, -2} \omega_1, \theta_{2, -2} \omega_2)$
 such that
$$
 \Phi (1, \theta_{1, -2} \omega_1, \theta_{2, -2} \omega_2,
 z_2 ) =z_1,
$$
which along with  the cocycle property of $\Phi$
implies that,  for all $t \ge 1$,
 $$
 \Phi (t, \theta_{1, -2} \omega_1, \theta_{2, -2} \omega_2,
 z_2 )
 = \Phi (t-1,  \theta_{1, -1}\omega_1,  \theta_{2, -1} \omega_2, z_1 ).
 $$
The invariance of $B$  again  implies
 $\Phi (t, \theta_{1, -2} \omega_1, \theta_{2, -2} \omega_2,
 z_2 ) \in B(\theta_{1, t-2} \omega_1, \theta_{2, t-2} \omega_2)$.
 Proceeding inductively,
  we find that for every $n=1,2, \ldots$,
 there exists $z_n \in B(\theta_{1, -n} \omega_1, \theta_{2, -n} \omega_2)$
 such that
 $\Phi (n, \theta_{1, -n} \omega_1, \theta_{2,-n} \omega_2 , z_n)
 =y(\omega_1, \omega_2)$ and
 for all $t \ge 0$,
 \be\label{pposquainv8}
 \Phi (t, \theta_{1, -n} \omega_1, \theta_{2,-n} \omega_2 , z_n )
 \in B( \theta_{1, t-n} \omega_1, \theta_{2, t-n}\omega_2 ).
\ee
 In addition,  for all $t \ge 1$, we have
 $$
 \Phi (t-1, \theta_{1, -n+1} \omega_1, \theta_{2, -n+1} \omega_2, z_{n-1} )
 =\Phi (t, \theta_{1, -n} \omega_1, \theta_{2,-n} \omega_2 , z_n ).
 $$
Define a map  $\psi(\cdot, \omega_1, \omega_2)$ from $ \R$
into  $ X$ such that
 for every $t \in \R$,  $\psi(t, \omega_1, \omega_2)$
 is  the common value of
 $\Phi(t+n, \theta_{1, -n} \omega_1, \theta_{2,-n} \omega_2, z_n )$
 for all $n \ge -t$.
 Then we have
 $\psi (0, \omega_1, \omega_2)  =y(\omega_1, \omega_2)$
 and
 $\psi (t, \omega_1, \omega_2) 
  \in B( \theta_{1, t} \omega_1, \theta_{2, t}\omega_2 )$ 
    by \eqref{pposquainv8}.
 If  $t \ge 0$,  $\tau \in \R$ and
   $n \ge -\tau$, by  definition
 we  find that
 $$
 \Phi(t, \theta_{1, \tau}\omega_1, \theta_{2, \tau}\omega_2,
 \psi(\tau, \omega_1, \omega_2) )
 =
 \Phi(t, \theta_{1, \tau}\omega_1, \theta_{2, \tau}\omega_2,
  \Phi(\tau +n, \theta_{1, -n} \omega_1, \theta_{2,-n} \omega_2, z_n ))
 $$
 $$
  =
  \Phi(t+ \tau +n, \theta_{1, -n} \omega_1, \theta_{2,-n} \omega_2, z_n )
  =\psi(t+\tau, \omega_1, \omega_2).
$$
So, $\psi$ is a complete orbit of $\Phi$ with desired properties
and thus $B$ is    quasi-invariant.

Note that  (iii)  follows    from (i)  and (ii) immediately and thus the proof is completed.
\end{proof}

\begin{defn}
\label{defomlit}
Let $B=\{B(\omega_1, \omega_2): \omega_1 \in \Omega_1, \ \omega_2  \in \Omega_2\}$
be a family of nonempty subsets of $X$.
For every $\omega_1 \in \Omega_1$ and
$\omega_2 \in \Omega_2$,  let
\be\label{omegalimit}
\Omega (B, \omega_1, \omega_2)
= \bigcap_{\tau \ge 0}
\  \overline{ \bigcup_{t\ge \tau} \Phi(t, \theta_{1,-t} \omega_1, 
\theta_{2, -t} \omega_2, B(\theta_{1,-t} \omega_1, \theta_{2,-t}\omega_2  ))}.
\ee
Then
the  family
 $\{\Omega (B, \omega_1, \omega_2): \omega_1 \in \Omega_1, \omega_2 \in \Omega_2 \}$
 is called the $\Omega$-limit set of $B$
 and is denoted by $\Omega(B)$.
 \end{defn}

 \begin{rem}
 \label{defomegalimitrem1}
 Given $\omega_1 \in \Omega_1$  and
  $\omega_2 \in \Omega_2$,
 it follows from  \eqref{omegalimit} that
 a point $y \in X$ belongs to
 $\Omega(B, \omega_1, \omega_2)$ if and only if  there exist two  sequences
 $t_n \to \infty$ and $x_n \in B(\theta_{1, -t_n}\omega_1, \theta_{2,  -t_n}\omega_2)$
 such that
 $$
 \lim_{n \to \infty} \Phi(t_n, \theta_{1, -t_n}\omega_1,
 \theta_{2, -t_n} \omega_2, x_n)
 = y.
 $$
 \end{rem}

\begin{defn}
Let $\cald$ be a collection of some families of nonempty subsets of $X$ and
$K=\{K(\omega_1, \omega_2): \omega_1 
\in \Omega_1, \ \omega_2  \in \Omega_2\} \in \mathcal{D}$. Then
$K$  is called a  $\cald$-pullback
 absorbing
set for   $\Phi$   if
for all $\omega_1 \in \Omega_1$,
$\omega_2 \in \Omega_2 $
and  for every $B \in \cald$,
 there exists $T= T(B, \omega_1, \omega_2)>0$ such
that
\be
\label{abs1}
\Phi(t, \theta_{1,-t} \omega_1, \theta_{2, -t} \omega_2, 
B(\theta_{1,-t} \omega_1, \theta_{2,-t} \omega_2  )) 
 \subseteq  K(\omega_1, \omega_2)
\quad \mbox{for all} \ t \ge T.
\ee
If, in addition, for all $\omega_1 \in \Omega_1$ and $\omega_2 \in \Omega_2$,
   $K(\omega_1, \omega_2)$ is a closed nonempty subset of $X$
   and $K$ is measurable with respect to the $P$-completion of $\calftwo$
   in $\Omega_2$,
 then we say $K$ is a  closed measurable
  $\cald$-pullback absorbing  set for $\Phi$.
\end{defn}

\begin{defn}
\label{asycomp}
 Let $\cald$ be a collection of  some families of  nonempty
 subsets of $X$.
 Then
$\Phi$ is said to be  $\cald$-pullback asymptotically
compact in $X$ if
for all $\omega_1 \in \Omega_1$ and
$\omega_2 \in \Omega_2$,    the sequence
\be
\label{asycomp1}
\{\Phi(t_n, \theta_{1, -t_n} \omega_1, \theta_{2, -t_n} \omega_2,
x_n)\}_{n=1}^\infty \mbox{  has a convergent  subsequence  in }   X
\ee
 whenever
  $t_n \to \infty$, and $ x_n\in   B(\theta_{1, -t_n}\omega_1,
  \theta_{2, -t_n} \omega_2 )$   with
$\{B(\omega_1, \omega_2): \omega_1 \in \Omega_1, \ \omega_2 \in \Omega_2
\}   \in \mathcal{D}$.
\end{defn}

\begin{defn}
\label{defatt}
 Let $\cald$ be a collection of some families of
 nonempty  subsets of $X$
 and
 $\cala = \{\cala (\omega_1, \omega_2): \omega_1 \in \Omega_1,
  \omega_2 \in \Omega_2 \} \in \cald $.
Then     $\cala$
is called a    $\cald$-pullback    attractor  for
  $\Phi$
if the following  conditions (i)-(iii) are  fulfilled:
\begin{itemize}
\item [(i)]   $\cala$ is measurable
with respect to the $P$-completion of $\calftwo$ in $\Omega_2$ and
 $\cala(\omega_1, \omega_2)$ is compact for all $\omega_1 \in \Omega_1$
and    $\omega_2 \in \Omega_2$.

\item[(ii)]   $\cala$  is invariant, that is,
for every $\omega_1 \in \Omega_1$ and
 $\omega_2 \in \Omega_2$,
$$ \Phi(t, \omega_1, \omega_2, \cala(\omega_1, \omega_2)   )
= \cala (\theta_{1,t} \omega_1, \theta_{2,t} \omega_2
), \ \  \forall \   t \ge 0.
$$

\item[(iii)]   $\cala  $
attracts  every  member   of   $\cald$,  that is, for every
 $B = \{B(\omega_1, \omega_2): \omega_1 \in \Omega_1, \omega_2 \in \Omega_2\}
 \in \cald$ and for every $\omega_1 \in \Omega_1$ and
 $\omega_2 \in \Omega_2$,
$$ \lim_{t \to  \infty} d (\Phi(t, \theta_{1,-t}\omega_1,
 \theta_{2,-t}\omega_2, B(\theta_{1,-t}\omega_1, \theta_{2,-t}\omega_2) ) , 
 \cala (\omega_1, \omega_2 ))=0.
$$
 \end{itemize}
 If, in addition, there exists $T>0$ such that
 $$
 \cala(\theta_{1, T} \omega_1, \omega_2) = \cala(\omega_1,    \omega_2 ),
 \quad \forall \  \omega_1 \in \Omega_1, \forall \
  \omega_2 \in \Omega_2,
 $$
 then we say $\cala$ is periodic with period $T$.
\end{defn}

From   Definition \ref{defatt},   the following observation is evident.

\begin{rem}\label{defattrem1}
 The  $\cald$-pullback attractor of $\Phi$, if it exists,
 must be unique within the collection $\cald$. In other words,
   if $\{\cala (\omega_1, \omega_2) :  \omega_1 \in \Omega_1,
   \omega_2 \in \Omega_2 \}   \in \cald $
and
$\{\mathcal{\tilde{A}}(\omega_1, \omega_2): \omega_1 
\in \Omega, \omega_2 \in \Omega_2  \}  \in \cald $ are  both
$\cald$-pullback    attractors  for
  $\Phi$, then we must have
  $\cala (\omega_1, \omega_2 )=
  {\mathcal{\tilde{A}}} (\omega_1, \omega_2)$ for all $\omega_1 \in \Omega_1$
  and  $\omega_2 \in \Omega_2$.
  This follows immediately from conditions (i)-(iii) in Definition \ref{defatt}.
  Furthermore, If $\cala$ is a periodic  $\cald$-pullback attractor of $\Phi$
  with period $T>0$, then we have
  $$
  \cala(\theta_{1, -T} \omega_1, \omega_2) = \cala(\omega_1,    \omega_2 ),
 \quad \forall \  \omega_1 \in \Omega_1, \forall \
  \omega_2 \in \Omega_2,
  $$
  \end{rem}

We next discuss the properties of
   $\Omega$-limit sets of a family of subsets of $X$.

\begin{lem}
\label{omegacomp}
 Let $\cald$ be a  collection of some  families of   nonempty subsets of
$X$ and $\Phi$  be a continuous   cocycle on $X$
over $(\Omega_1,  \{\thonet\}_{t \in \R})$
and
$(\Omega_2, \calftwo, P,  \{\thtwot\}_{t \in \R})$.
Suppose $\Phi$ is $\cald$-pullback asymptotically compact in $X$.
Then the following statements are true:

(i) For every $D \in \cald$,  $\omega_1 \in \Omega_1$ and
$\omega_2 \in \Omega_2$, the $\Omega$-limit set   $\Omega(D, \omega_1, \omega_2)$
   is nonempty, compact  and attracts $D$.

(ii) For every $D \in \cald$, the $\Omega$-limit set
$\Omega(D) = \{ \Omega (D, \omega_1, \omega_2): \omega_1 \in \Omega_1,
\omega_2 \in \Omega_2 \}$ is quasi-invariant.

(iii) Let $D=\{ D(\omega_1, \omega_2): \omega_1 \in \Omega_1, \omega_2
\in \Omega_2 \} $  belong to $\cald$ with
$D(\omega_1, \omega_2)$ being closed in $X$
for all $\omega_1 \in \Omega_1$
and $ \omega_2
\in \Omega_2 $. If  for each
$\omega_1 \in \Omega_1$ and $ \omega_2
\in \Omega_2 $,   there exists $t_0 =t_0 (D, \omega_1, \omega_2) \ge 0$ such that
$\Phi (t, \theta_{1, -t} \omega_1, \theta_{2, -t} \omega_2,
D(\theta_{1, -t} \omega_1, \theta_{2, -t} \omega_2) )
\subseteq D(\omega_1, \omega_2)$ for all $t\ge t_0$, then  the $\Omega$-limit set
$\Omega(D)  $ is invariant.
In addition, $\Omega(D, \omega_1, \omega_2) \subseteq D(\omega_1, \omega_2)$
for all $\omega_1 \in \Omega_1$ and $ \omega_2
\in \Omega_2 $.
\end{lem}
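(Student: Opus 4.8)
The plan is to prove the three assertions in the order stated, in each case translating everything into the sequential language of Remark~\ref{defomegalimitrem1}: a point $y$ lies in $\Omega(D,\omega_1,\omega_2)$ exactly when $y=\lim_n\Phi(t_n,\theta_{1,-t_n}\omega_1,\theta_{2,-t_n}\omega_2,x_n)$ for some $t_n\to\infty$ and $x_n\in D(\theta_{1,-t_n}\omega_1,\theta_{2,-t_n}\omega_2)$. The $\cald$-pullback asymptotic compactness of $\Phi$ (Definition~\ref{asycomp}) is the source of every convergent subsequence below, and the cocycle identity (iii) of Definition~\ref{ds1} together with the continuity (iv) is what moves the base point around; one must keep in mind that $\Phi(a,\eta_1,\eta_2,\cdot)\circ\Phi(b,\mu_1,\mu_2,\cdot)=\Phi(a+b,\mu_1,\mu_2,\cdot)$ only when $\eta_i=\theta_{i,b}\mu_i$.

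\emph{Part (i).} Nonemptiness: since every family in $\cald$ has nonempty fibers (see \eqref{defcald}), pick any $t_n\to\infty$ and $x_n\in D(\theta_{1,-t_n}\omega_1,\theta_{2,-t_n}\omega_2)$; asymptotic compactness extracts a convergent subsequence of $\Phi(t_n,\theta_{1,-t_n}\omega_1,\theta_{2,-t_n}\omega_2,x_n)$ whose limit lies in $\Omega(D,\omega_1,\omega_2)$ by Remark~\ref{defomegalimitrem1}. Closedness is immediate from \eqref{omegalimit}, $\Omega(D,\omega_1,\omega_2)$ being an intersection of closed sets. For compactness it remains to prove sequential precompactness: given $y_n\in\Omega(D,\omega_1,\omega_2)$, use Remark~\ref{defomegalimitrem1} to choose $s_n>n$ and $z_n\in D(\theta_{1,-s_n}\omega_1,\theta_{2,-s_n}\omega_2)$ with $d\big(\Phi(s_n,\theta_{1,-s_n}\omega_1,\theta_{2,-s_n}\omega_2,z_n),y_n\big)<1/n$; a convergent subsequence of $\Phi(s_n,\theta_{1,-s_n}\omega_1,\theta_{2,-s_n}\omega_2,z_n)$ has its limit $z$ in $\Omega(D,\omega_1,\omega_2)$ by Remark~\ref{defomegalimitrem1}, and $d(y_n,z)\to0$ along that subsequence. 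Attraction: if it failed there would be $\varepsilon>0$, $t_n\to\infty$ and $x_n\in D(\theta_{1,-t_n}\omega_1,\theta_{2,-t_n}\omega_2)$ with $d\big(\Phi(t_n,\theta_{1,-t_n}\omega_1,\theta_{2,-t_n}\omega_2,x_n),\Omega(D,\omega_1,\omega_2)\big)\ge\varepsilon$ for all $n$, contradicting the existence of a subsequence converging into $\Omega(D,\omega_1,\omega_2)$.

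\emph{Part (ii).} This runs exactly parallel to the proof of Lemma~\ref{posquainv}(ii). Fix $(\omega_1,\omega_2)$ and $y(\omega_1,\omega_2)\in\Omega(D,\omega_1,\omega_2)$, with witnessing sequences $t_n\to\infty$, $x_n\in D(\theta_{1,-t_n}\omega_1,\theta_{2,-t_n}\omega_2)$, $\Phi(t_n,\theta_{1,-t_n}\omega_1,\theta_{2,-t_n}\omega_2,x_n)\to y(\omega_1,\omega_2)$. For $k=1,2,\dots$ extract nested subsequences and diagonalize so that, along the final subsequence, $\Phi(t_n-k,\theta_{1,-t_n}\omega_1,\theta_{2,-t_n}\omega_2,x_n)\to\xi_k$; because $\theta_{i,-t_n}\omega_i=\theta_{i,-(t_n-k)}(\theta_{i,-k}\omega_i)$ and $t_n-k\to\infty$, asymptotic compactness applies and Remark~\ref{defomegalimitrem1} gives $\xi_k\in\Omega(D,\theta_{1,-k}\omega_1,\theta_{2,-k}\omega_2)$. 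Setting $\xi_0:=y(\omega_1,\omega_2)$, continuity of $\Phi(1,\theta_{1,-k}\omega_1,\theta_{2,-k}\omega_2,\cdot)$ and the cocycle identity give $\Phi(1,\theta_{1,-k}\omega_1,\theta_{2,-k}\omega_2,\xi_k)=\xi_{k-1}$. Define $\psi(t,\omega_1,\omega_2):=\Phi\big(t+k,\theta_{1,-k}\omega_1,\theta_{2,-k}\omega_2,\xi_k\big)$ for any integer $k\ge\max\{0,-t\}$; the cocycle property shows this is independent of $k$, that $\psi(0,\omega_1,\omega_2)=y(\omega_1,\omega_2)$, and that \eqref{comporbit1} holds. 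Moreover $\psi(t,\omega_1,\omega_2)=\lim_n\Phi(t_n+t,\theta_{1,-t_n}\omega_1,\theta_{2,-t_n}\omega_2,x_n)$, and since $\theta_{i,-t_n}\omega_i=\theta_{i,-(t_n+t)}(\theta_{i,t}\omega_i)$, Remark~\ref{defomegalimitrem1} gives $\psi(t,\omega_1,\omega_2)\in\Omega(D,\theta_{1,t}\omega_1,\theta_{2,t}\omega_2)$. Since \eqref{comporbit1} couples $\psi$ only at a fixed base pair, repeating this for every $(\omega_1,\omega_2)$ produces the complete orbit required for quasi-invariance of $\Omega(D)$.

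\emph{Part (iii).} For the inclusion, let $y\in\Omega(D,\omega_1,\omega_2)$ with $t_n\to\infty$ and $x_n\in D(\theta_{1,-t_n}\omega_1,\theta_{2,-t_n}\omega_2)$ as above; for $n$ large $t_n\ge t_0(D,\omega_1,\omega_2)$, so by hypothesis $\Phi(t_n,\theta_{1,-t_n}\omega_1,\theta_{2,-t_n}\omega_2,x_n)\in D(\omega_1,\omega_2)$, and closedness of $D(\omega_1,\omega_2)$ forces $y\in D(\omega_1,\omega_2)$. For invariance, first note $\Omega(D)$ is positively invariant: if $y\in\Omega(D,\omega_1,\omega_2)$ and $s\ge0$, then by continuity and the cocycle identity $\Phi(s,\omega_1,\omega_2,y)=\lim_n\Phi(s+t_n,\theta_{1,-t_n}\omega_1,\theta_{2,-t_n}\omega_2,x_n)$, and since $\theta_{i,-t_n}\omega_i=\theta_{i,-(s+t_n)}(\theta_{i,s}\omega_i)$, Remark~\ref{defomegalimitrem1} yields $\Phi(s,\omega_1,\omega_2,y)\in\Omega(D,\theta_{1,s}\omega_1,\theta_{2,s}\omega_2)$. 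Since $\Omega(D)$ is nonempty (part (i)), positively invariant, and quasi-invariant (part (ii)), Lemma~\ref{posquainv}(iii) shows it is invariant. (The absorbing hypothesis is used only for the inclusion $\Omega(D,\omega_1,\omega_2)\subseteq D(\omega_1,\omega_2)$; positive invariance, and hence invariance, holds already under asymptotic compactness.) The one genuinely delicate step is the backward construction in part (ii): beyond the nested-subsequence and diagonal bookkeeping, one must track the $\theta_1$- and $\theta_2$-shifts on the base points with care, since the cocycle identity collapses a composition into a single $\Phi$ only when the base of the inner map is translated by exactly the right amount into that of the outer map. Everything else (the precompactness diagonal argument in (i), the positive-invariance computation in (iii)) is a milder instance of the same bookkeeping.
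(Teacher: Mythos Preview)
Your proof is correct. Parts (i) and (ii) follow the paper's argument essentially line for line: the same sequential characterization from Remark~\ref{defomegalimitrem1}, the same $1/n$-approximation trick for compactness, the same contradiction for attraction, and the same step-by-step backward construction with a diagonal extraction for quasi-invariance.

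Part (iii) is where you diverge from the paper, and your route is the cleaner one. The paper obtains the inclusion $\Omega(D,\theta_{1,t}\omega_1,\theta_{2,t}\omega_2)\subseteq\Phi(t,\omega_1,\omega_2,\Omega(D,\omega_1,\omega_2))$ from quasi-invariance via Lemma~\ref{posquainv}(i), but then proves the reverse inclusion (positive invariance) by an indirect argument that genuinely uses the absorbing hypothesis: it embeds $\Phi(t,\omega_1,\omega_2,\Omega(D,\omega_1,\omega_2))$ inside $\Phi(s,\theta_{1,-s}\theta_{1,t}\omega_1,\theta_{2,-s}\theta_{2,t}\omega_2,\Omega(D,\cdots))$ for arbitrarily large $s$, then uses $\Omega(D,\cdot,\cdot)\subseteq D(\cdot,\cdot)$ (which needs the absorbing hypothesis) together with the attraction statement from part~(i) to push this into $\Omega(D,\theta_{1,t}\omega_1,\theta_{2,t}\omega_2)$. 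You instead observe that positive invariance follows directly from continuity plus the cocycle identity applied to any witnessing sequence, with no absorbing assumption needed, and then invoke Lemma~\ref{posquainv}(iii). This is shorter and, as you correctly note in the parenthetical remark, shows that invariance of $\Omega(D)$ already holds under asymptotic compactness alone; the absorbing hypothesis and closedness of the fibers are needed only for the final inclusion $\Omega(D,\omega_1,\omega_2)\subseteq D(\omega_1,\omega_2)$. (The paper in fact establishes the positive-invariance computation implicitly in the course of part~(ii), around \eqref{pocomp2}, but does not reuse it in part~(iii).)
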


\begin{proof}
 (i). Let $t_n \to \infty$ and $x_n \in D(\theta_{1, -t_n}\omega_1,
 \theta_{2, -t_n} \omega_2)$. Then the $\cald$-pullback
 asymptotic compactness of $\Phi$ implies that there exists
 $y \in X$ such that, up to a subsequence,
 $$
 \lim_{n \to \infty}
 \Phi (t_n, \theta_{1, -t_n}\omega_1,
 \theta_{2, -t_n} \omega_2, x_n ) =y,
 $$
 which along with  Remark \ref{defomegalimitrem1}
 shows that $y \in \Omega(D, \omega_1, \omega_2)$
 and hence $\Omega(D, \omega_1, \omega_2)$
 is nonempty
 for all
 $\omega_1 \in \Omega_1$   and $\omega_2 \in \Omega_2$.

 Let $\{y_n \}_{n=1}^\infty$ be a sequence in
 $\Omega(D, \omega_1, \omega_2)$. It follows from
 Remark \ref{defomegalimitrem1} that there exist
 $t_n \to \infty$ and $x_n \in D(\theta_{1, -t_n}\omega_1,
 \theta_{2, -t_n} \omega_2)$ such that, for all $n \in \N$,
 \be
 \label{pomegacomp1}
 d(\Phi (t_n, \theta_{1, -t_n}\omega_1,
 \theta_{2, -t_n} \omega_2, x_n ),\  y_n ) \le {\frac 1n}.
 \ee
 By the $\cald$-pullback
 asymptotic compactness of $\Phi$ we find that there exists
 $y \in X$ such that, up to a subsequence,
 $$
 \lim_{n \to \infty}
 \Phi (t_n, \theta_{1, -t_n}\omega_1,
 \theta_{2, -t_n} \omega_2, x_n ) =y,
 $$
 which together with Remark \ref{defomegalimitrem1}
 and inequality \eqref{pomegacomp1} implies that
 $y \in  \Omega(D, \omega_1, \omega_2)$
 and $y_n \to y$. Therefore, $\Omega(D, \omega_1, \omega_2)$
 is compact  for all
 $\omega_1 \in \Omega_1$   and $\omega_2 \in \Omega_2$.

We now prove that for all
 $\omega_1 \in \Omega_1$   and $\omega_2 \in \Omega_2$,
 \be
 \label{pomegacomp2}
 \lim_{t \to \infty}
 d (\Phi (t, \theta_{1, -t}\omega_1, \theta_{2, -t} \omega_2,
  D(\theta_{1, -t}\omega_1, \theta_{2, -t} \omega_2) ), \ 
  \Omega (D,\omega_1,  \omega_2) ) =0.
 \ee
 Suppose \eqref{pomegacomp2}  is not true. Then there exist
 $\omega_1 \in \Omega_1$,  $\omega_2 \in \Omega_2$,
$\varepsilon_0>0$,
 $t_n \to \infty$ and   $x_n 
 \in D(\theta_{1, -t_n}\omega_1, \theta_{2, -t_n} \omega_2 )$ such that
 for all $n \in  \N$,
 \be
 \label{pomegacomp3}
 d(  \Phi(t_n, \theta_{1, -t_n}\omega_1, \theta_{2, -t_n} \omega_2, x_n )  , 
 \  \Omega(D, \omega_1, \omega_2 )) \ge \varepsilon_0.
 \ee
 By the $\cald$-pullback compactness of $\Phi$ and 
  Remark \ref{defomegalimitrem1},
 there exist $y \in \Omega(D, \omega_1, \omega_2 )$
  such that, up to a subsequence,
 $\Phi(t_n, \theta_{1, -t_n}\omega_1, \theta_{2, -t_n} \omega_2, x_n )
 \to y$. On the other hand, it follows from   \eqref{pomegacomp3}
 that  $d( y, \  \Omega(D, \omega_1, \omega_2 )) \ge \varepsilon_0$, 
 a contradiction
 with $y \in \Omega(D, \omega_1, \omega_2 )$. 
 This  proves \eqref{pomegacomp2}.

(ii).  Let    $y: \Omega_1 \times \Omega_2 \to  X$  be
 a mapping such that
 $y(\omega_1, \omega_2) \in \Omega(D, \omega_1, \omega_2)$
for all   $\omega_1 \in \Omega_1$
and $\omega_2 \in \Omega_2$.  Then we need to show
that  there exists a complete
orbit $\psi$ such that $\psi(0, \omega_1, \omega_2)
= y(\omega_1, \omega_2)$
and $\psi (t, \omega_1, \omega_2) \in \Omega(D, \theta_{1, t} \omega_1,
\theta_{2, t} \omega_2 )$ for all $t \in \R$,
$\omega_1 \in \Omega_1$
and $\omega_2 \in \Omega_2$.
Fix $\omega_1 \in \Omega_1$  and $\omega_2 \in \Omega_2$. We first construct
 a mapping $\psi_{\omega_1, \omega_2}: \R \to X$ such that
 $\psi_{\omega_1, \omega_2}(0) = y(\omega_1, \omega_2)$
and $\psi_{\omega_1, \omega_2}(t) \in \Omega(D, \theta_{1, t} \omega_1,
\theta_{2, t} \omega_2 )$ for all $t \in \R$.
Since $y(\omega_1, \omega_2) \in   \Omega(D, \omega_1, \omega_2)$,
by Remark \ref{defomegalimitrem1} we find that
there exist $t_n \to \infty$ and $x_n
 \in D(\theta_{1,-t_n}\omega_1, \theta_{2, -t_n}\omega_2 )$ 
 such that
\be
\label{pocomp1}
\lim_{n \to \infty}
\Phi (t_n, \theta_{1,-t_n}\omega_1, \theta_{2, -t_n}\omega_2 , x_n)
=y(\omega_1, \omega_2).
\ee
By \eqref{pocomp1} and the continuity of $\Phi$  we have, for all $t \ge 0$,
$$
\lim_{n \to \infty}
\Phi(t, \omega_1, \omega_2, \Phi (t_n, \theta_{1,-t_n}\omega_1, \theta_{2, -t_n}\omega_2 , x_n) )
=\Phi(t, \omega_1, \omega_2, y(\omega_1, \omega_2) ).
$$
Then the cocycle property of $\Phi$ implies that,  for all $t \ge 0$,
\be
\label{pocomp2}
\lim_{n \to \infty}
\Phi(t_n +t,  \theta_{1,-t_n-t} \theta_{1,t}\omega_1, \theta_{2, -t_n-t}\theta_{2, t} \omega_2 , x_n) )
=\Phi(t, \omega_1, \omega_2, y(\omega_1, \omega_2) ).
\ee
It follows from \eqref{pocomp2} and Remark \ref{defomegalimitrem1}
that, for all $t \ge 0$,
$$
\Phi (t, \omega_1, \omega_2, y(\omega_1, \omega_2))
\in \Omega(D, \theta_{1, t} \omega_1, \theta_{2, t} \omega_2 ).
$$
Let $N_1$ be large enough such that
$t_n \ge 1$ for $n \ge N_1$.
Consider the sequence
$ \Phi(t_n -1,  \theta_{1,-t_n} \omega_1, \theta_{2, -t_n} \omega_2 , x_n )$
$ =\Phi(t_n -1,  \theta_{1,-t_n +1} \theta_{1, -1}\omega_1, 
\theta_{2, -t_n+1} \theta_{2, -1}  \omega_2 , x_n )$.
Then
the $\cald$-pullback asymptotic compactness of $\Phi$ implies that there
exists $y_1 \in X$ and
 a subsequence, which is still denoted by the same one, such that
\be
\label{pocomp3}
 \lim_{n \to \infty}
 \Phi(t_n -1,  \theta_{1,-t_n} \omega_1, \theta_{2, -t_n} \omega_2 , x_n )
 =\lim_{n \to \infty}
 \Phi(t_n -1,  \theta_{1,-t_n +1} \theta_{1, -1}\omega_1, 
 \theta_{2, -t_n+1} \theta_{2, -1}  \omega_2 , x_n )
 = y_1.
 \ee
 By \eqref{pocomp3} and Remark \ref{defomegalimitrem1}
 we have $y_1 \in \Omega(D, \theta_{1, -1}\omega_1, \theta_{2, -1}\omega_2 )$.
 By the continuity of $\Phi$ again, we get that, for all $t \ge 0$,
 $$\lim_{n \to \infty}
 \Phi (t, \theta_{1, -1} \omega_1, \theta_{2, -1} \omega_2,
 \Phi(t_n -1,  \theta_{1,-t_n} \omega_1, \theta_{2, -t_n} \omega_2 , x_n ))
 =\Phi (t, \theta_{1, -1} \omega_1, \theta_{2, -1} \omega_2, y_1),
 $$
 which along with the cocycle property of $\Phi$ shows that,  for $t \ge 0$,
 \be
 \label{pocomp4}
 \lim_{n \to \infty}
 \Phi(t_n +t -1,  \theta_{1,-t_n} \omega_1, \theta_{2, -t_n} \omega_2 , x_n)
 =\Phi (t, \theta_{1, -1} \omega_1, \theta_{2, -1} \omega_2, y_1).
\ee
By \eqref{pocomp4} and Remark  \ref{defomegalimitrem1} we obtain that, for $t \ge 0$,
$$
\Phi (t, \theta_{1, -1} \omega_1, \theta_{2, -1} \omega_2, y_1)
\in \Omega(D, \theta_{1, t-1} \omega_1, \theta_{2, t-1} \omega_2 ).$$
On the other hand, it follows from \eqref{pocomp2} that, for all $t \ge 1$,
\be
\label{pocomp5}
\lim_{n \to \infty}
 \Phi(t_n +t -1,  \theta_{1,-t_n} \omega_1, \theta_{2, -t_n} \omega_2 , x_n)
 =\Phi (t-1,   \omega_1,   \omega_2, y(\omega_1, \omega_2) ).
 \ee
 By \eqref{pocomp4}-\eqref{pocomp5} we get, for all $t \ge 1$,
 $$\Phi (t, \theta_{1, -1} \omega_1, \theta_{2, -1} \omega_2, y_1)
 = \Phi (t-1,   \omega_1,   \omega_2, y(\omega_1, \omega_2) ).
 $$
 In particular,   for $t =1$ we have
 $y(\omega_1, \omega_2) = \Phi(1, \theta_{1,-1} \omega_1, \theta_{2, -1} \omega_2, y_1)$.
 Continuing this process, by a diagonal argument,
  we find that for every $m=1,2, \ldots$,
 there exists $y_m \in \Omega(D,\theta_{1, -m} \omega_1, \theta_{2, -m} \omega_2)$
 such that
 $\Phi (m, \theta_{1, -m} \omega_1, \theta_{2,-m} \omega_2 , y_m )
 =y(\omega_1, \omega_2)$ and
 for all $t \ge 0$,
 \be\label{pocomp6}
 \Phi (t, \theta_{1, -m} \omega_1, \theta_{2,-m} \omega_2 , y_m )
 \in \Omega(D, \theta_{1, t-m} \omega_1, \theta_{2, t-m}\omega_2 ).
\ee
 Furthermore, for all $t \ge 1$, we have
 $$
 \Phi (t-1, \theta_{1, -m+1} \omega_1, \theta_{2, -m+1} \omega_2, y_{m-1} )
 =\Phi (t, \theta_{1, -m} \omega_1, \theta_{2,-m} \omega_2 , y_m ).
 $$

 We now define a mapping $\psi_{\omega_1, \omega_2}: \R \to X$ such that
 for every $t \in \R$,  $\psi_{\omega_1, \omega_2} (t)$
 is  the common value of
 $\Phi(t+m, \theta_{1, -m} \omega_1, \theta_{2,-m} \omega_2, y_m )$
 for all $m \ge -t$.
 Then we have
 $\psi_{\omega_1, \omega_2}(0) =y(\omega_1, \omega_2)$. It also follows from
 \eqref{pocomp6} that
 $\psi_{\omega_1, \omega_2}(t) \in \Omega(D, \theta_{1, t} \omega_1, 
 \theta_{2, t}\omega_2 )$ for all $t \in \R$.
 If  $t \ge 0$,  $\tau \in \R$ and
   $m \ge -\tau$, by the definition of $\psi_{\omega_1, \omega_2}$
 we have
 $$
 \Phi(t, \theta_{1, \tau}\omega_1, \theta_{2, \tau}\omega_2,
 \psi_{\omega_1, \omega_2}(\tau) )
 =
 \Phi(t, \theta_{1, \tau}\omega_1, \theta_{2, \tau}\omega_2,
  \Phi(\tau +m, \theta_{1, -m} \omega_1, \theta_{2,-m} \omega_2, y_m ))
  $$
 \be\label{pocomp9}
  =
  \Phi(t+ \tau +m, \theta_{1, -m} \omega_1, \theta_{2,-m} \omega_2, y_m )
  =\psi_{\omega_1, \omega_2} (t+\tau).
 \ee
 Let $\psi: \R \times \Omega_1 \times \Omega_2
 \to X$   be the map given  by
 $\psi (t, \omega_1, \omega_2) =\psi_{\omega_1, \omega_2} (t)$
 for all $t \in \R$, $\omega_1 \in \Omega_1$ and
   $\omega_2 \in \Omega_2$. It follows from
   \eqref{pocomp9} that
   $\psi$ is a complete orbit
   of $\Phi$. In addition,
$\psi(0,\omega_1, \omega_2) =y(\omega_1, \omega_2)$
and
 $\psi (t, \omega_1, \omega_2 )
  \in \Omega(D, \theta_{1, t} \omega_1, \theta_{2, t}\omega_2 )$ 
  for all $t \in \R$. This shows that $\Omega(D)$ is quasi-invariant under $\Phi$.

  (iii). We now  prove the invariance  of $\Omega(D)$.
  By the quasi-invariance (ii) of $\Omega (D)$
  and Lemma \ref{posquainv} we get that,
  for every $t\ge 0$, $\omega_1 \in \Omega_1$ and $\omega_2 \in \Omega_2$,
\be\label{pocomp40}
\Omega(D, \theta_{1, t} \omega_1, \theta_{2, t} \omega_2 )
 \subseteq \Phi(t, \omega_1, \omega_2, \Omega(D, \omega_1, \omega_2 )).
 \ee
   We now prove the converse inclusion relation  of  \eqref{pocomp40}.
    Note that \eqref{pocomp40} implies that,  for all $r \ge 0$,
    $\Omega(D, \omega_1, \omega_2)
    \subseteq  \Phi (r, \theta_{1, -r} \omega_1, \theta_{2, -r} \omega_2,
    \Omega (D, \theta_{1, -r} \omega_1, \theta_{2, -r} \omega_2 ))$ and therefore
    for every  $t \ge 0$    and  $r \ge 0$,
    $$
     \Phi(t, \omega_1, \omega_2, \Omega(D, \omega_1, \omega_2) )
     \subseteq
     \Phi (t, \omega_1, \omega_2,
     \Phi (r, \theta_{1, -r} \omega_1, \theta_{2, -r} \omega_2,
    \Omega (D, \theta_{1, -r} \omega_1, \theta_{2, -r} \omega_2 )))
    $$
    $$
    = \Phi (t+r, \theta_{1, -r} \omega_1, \theta_{2, -r} \omega_2,
    \Omega (D, \theta_{1, -r} \omega_1, \theta_{2, -r} \omega_2  ))
    $$
   \be
    \label{pocomp41}
   = \Phi (s, \theta_{1, -s}  \theta_{1,t} \omega_1, \theta_{2, -s} \theta_{2,t} \omega_2,
    \Omega (D, \theta_{1, -s}  \theta_{1,t} \omega_1, \theta_{2, -s} \theta_{2,t} \omega_2 )),
   \ee
    where $s = t+r$.
    Note that
    $\Phi(t, \theta_{1, -t} \omega_1, \theta_{2, -t} \omega_2, 
    D( \theta_{1, -t} \omega_1, \theta_{2, -t} ))  \subseteq D(\omega_1, \omega_2)$
    for all $t \ge t_0$,
    and hence, by Definition  \ref{defomlit} we get
  \be\label{pocomp42}
    \Omega(D, \omega_1, \omega_2)
    \subseteq {\overline{\bigcup_{t \ge t_0} 
    \Phi (t,  \theta_{1, -t} \omega_1, \theta_{2, -t} \omega_2, 
    D( \theta_{1, -t} \omega_1, \theta_{2, -t} \omega_2 ))  }}
     \subseteq {\overline{D(\omega_1, \omega_2) }} 
     = D(\omega_1, \omega_2).
  \ee
  Here we have used the assumption that
  $ D(\omega_1, \omega_2)$ is closed in $X$.
      It follows from \eqref{pomegacomp2} and \eqref{pocomp42}   that
    \be
    \label{pocomp43}
    \lim_{s  \to \infty}
     d ( \Phi (s, \theta_{1, -s}  \theta_{1,t} \omega_1, \theta_{2, -s} \theta_{2,t} \omega_2,
    \Omega (D, \theta_{1, -s}  \theta_{1,t} \omega_1, 
    \theta_{2, -s} \theta_{2,t} \omega_2 ) ), \Omega(D, \theta_{1,t} \omega_1, 
     \theta_{2,t} \omega_2 ) ) =0.
    \ee
    By \eqref{pocomp41}  and \eqref{pocomp43} we get
    $$
     d( \Phi(t, \omega_1, \omega_2, \Omega(D, \omega_1, \omega_2) ),
     \Omega(D, \theta_{1,t} \omega_1,  \theta_{2,t} \omega_2 ))
     =0,
    $$
   and hence we have
    $ \Phi(t, \omega_1, \omega_2, \Omega(D, \omega_1, \omega_2) )
    \subseteq
     \Omega(D, \theta_{1,t} \omega_1,  \theta_{2,t} \omega_2  )$,
     from which and   \eqref{pocomp40}
      the invariance of $\Omega(D)$ follows.
      The invariance of $\Omega(D)$ and \eqref{pocomp42}  yield
      (iii).
\end{proof}

 As stated below, the structure of the $\cald$-pullback attractor
 of $\Phi$ is fully  determined by the
 $\cald$-complete orbits.

\begin{lem}
\label{attstr1}
 Let $\cald$ be a  collection of some  families of   nonempty subsets of
$X$ and $\Phi$  be a continuous   cocycle on $X$
over $(\Omega_1,  \{\thonet\}_{t \in \R})$
and
$(\Omega_2, \calftwo, P,  \{\thtwot\}_{t \in \R})$.
Suppose that
$\Phi$ has a  $\cald$-pullback
attractor $\cala = \{\cala (\omega_1, \omega_2): \omega_1 \in \Omega_1, 
\omega_2 \in \Omega_2 \}   \in \cald$. Then, for
 every $\omega_1 \in \Omega_1$ and
$\omega_2 \in \Omega_2$,
\begin{center}
$\cala(\omega_1, \omega_2)
=\{
\psi(0, \omega_1, \omega_2):  \psi$ \mbox{is a } 
$\cald$-complete  \mbox{  orbit of } $\Phi
\}.
$
\end{center}
\end{lem}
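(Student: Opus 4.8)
The plan is to establish the two inclusions separately, relying on the invariance and pullback‑attraction properties of $\cala$ together with Lemma \ref{posquainv}. Throughout, fix $\omega_1 \in \Omega_1$ and $\omega_2 \in \Omega_2$.

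For the inclusion $\cala(\omega_1,\omega_2) \subseteq \{\psi(0,\omega_1,\omega_2) : \psi \text{ is a } \cald\text{-complete orbit of } \Phi\}$, I would take $x \in \cala(\omega_1,\omega_2)$ and use that $\cala$ is invariant, hence quasi-invariant by Lemma \ref{posquainv}(ii). I then build a selection $y : \Omega_1 \times \Omega_2 \to X$ by setting $y$ equal to $x$ at the fixed pair $(\omega_1,\omega_2)$ and, at every other pair $(\tilde\omega_1,\tilde\omega_2)$, equal to an arbitrary element of the nonempty set $\cala(\tilde\omega_1,\tilde\omega_2)$; this uses that $\cala \in \cald$ so that all fibers are nonempty. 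Quasi-invariance of $\cala$ then furnishes a complete orbit $\psi$ with $\psi(0,\omega_1,\omega_2) = y(\omega_1,\omega_2) = x$ and $\psi(t,\tilde\omega_1,\tilde\omega_2) \in \cala(\theta_{1,t}\tilde\omega_1,\theta_{2,t}\tilde\omega_2)$ for all $t \in \R$. Since $\cala \in \cald$, this $\psi$ is a $\cald$-complete orbit of $\Phi$, so $x$ lies in the right-hand set.

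For the reverse inclusion, let $\psi$ be a $\cald$-complete orbit, so there is $D = \{D(\omega_1,\omega_2) : \omega_1 \in \Omega_1, \omega_2 \in \Omega_2\} \in \cald$ with $\psi(t,\omega_1,\omega_2) \in D(\theta_{1,t}\omega_1,\theta_{2,t}\omega_2)$ for all $t \in \R$. Applying \eqref{comporbit1} with $\tau$ replaced by $-t$ gives, for every $t \ge 0$,
$$
\Phi\bigl(t,\theta_{1,-t}\omega_1,\theta_{2,-t}\omega_2,\psi(-t,\omega_1,\omega_2)\bigr) = \psi(0,\omega_1,\omega_2),
$$
and since $\psi(-t,\omega_1,\omega_2) \in D(\theta_{1,-t}\omega_1,\theta_{2,-t}\omega_2)$, the point $\psi(0,\omega_1,\omega_2)$ belongs to $\Phi(t,\theta_{1,-t}\omega_1,\theta_{2,-t}\omega_2,D(\theta_{1,-t}\omega_1,\theta_{2,-t}\omega_2))$ for every $t \ge 0$. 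Hence
$$
d\bigl(\psi(0,\omega_1,\omega_2),\cala(\omega_1,\omega_2)\bigr) \le d\bigl(\Phi(t,\theta_{1,-t}\omega_1,\theta_{2,-t}\omega_2,D(\theta_{1,-t}\omega_1,\theta_{2,-t}\omega_2)),\cala(\omega_1,\omega_2)\bigr)
$$
for all $t \ge 0$, and the right-hand side tends to $0$ as $t \to \infty$ by the attraction property (iii) in Definition \ref{defatt} applied to $D \in \cald$. Thus $d(\psi(0,\omega_1,\omega_2),\cala(\omega_1,\omega_2)) = 0$, and since $\cala(\omega_1,\omega_2)$ is compact (in particular closed) we conclude $\psi(0,\omega_1,\omega_2) \in \cala(\omega_1,\omega_2)$. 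The only delicate step is the first inclusion, where one must secure quasi-invariance of $\cala$ and a genuine selection $y$; this is exactly where Lemma \ref{posquainv}(ii), membership $\cala \in \cald$, and nonemptiness of the fibers are invoked. The second inclusion is routine once the pullback-attraction estimate is set up.
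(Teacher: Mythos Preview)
Your proof is correct and follows essentially the same approach as the paper's: the inclusion $\cala \subseteq \{\psi(0,\cdot,\cdot)\}$ via invariance $\Rightarrow$ quasi-invariance (Lemma \ref{posquainv}) together with $\cala \in \cald$, and the reverse inclusion via the complete-orbit identity at $\tau = -t$ combined with pullback attraction and closedness of $\cala(\omega_1,\omega_2)$. The only cosmetic difference is that the paper proves the two inclusions in the opposite order.
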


\begin{proof}
 Given $\omega_1 \in \Omega_1$  and $\omega_2 \in \Omega_2$,  let
 $B(\omega_1, \omega_2)  $ be the set consisting of all
 $ \psi(0, \omega_1, \omega_2)$ for
    $\cald$-complete     orbits $\psi$.
    We first prove $B(\omega_1, \omega_2)  \subseteq \cala(\omega_1, \omega_2) $.
    Let $y \in B(\omega_1, \omega_2)$. Then there exist
    $ D \in \cald$  and a complete orbit $\psi$
    such that   $y = \psi(0, \omega_1, \omega_2)$ and
 $\psi(t, \omega_1, \omega_2) \in
  D(\theta_{1,t} \omega_1, \theta_{2, t} \omega_2 )$
 for all  $t \in \R$. By the attraction property of $\cala$ we have
$$
 \lim_{t \to \infty}
 d (\Phi(t, \theta_{1,-t}\omega_1, \theta_{2,-t}\omega_2, 
 D(\theta_{1,-t}\omega_1, \theta_{2,-t}\omega_2) ) , 
 \cala (\omega_1, \omega_2 ))=0,
$$
which implies that
 \be
 \label{pattstr1_1}
 \lim_{t \to \infty}
 d (\Phi(t, \theta_{1,-t}\omega_1, \theta_{2,-t}\omega_2, 
 \psi(-t, \omega_1, \omega_2)) , \cala (\omega_1, \omega_2 ))=0.
 \ee
It follows from \eqref{comporbit1}
and \eqref{pattstr1_1} that
$d(\psi(0, \omega_1, \omega_2), \cala (\omega_1, \omega_2 ) )
=0$, and hence $y =\psi(0, \omega_1, \omega_2 ) \in
\cala (\omega_1, \omega_2 )$.
 This shows that
 $B(\omega_1, \omega_2)  \subseteq \cala(\omega_1, \omega_2) $.

   We next prove
$ \cala(\omega_1, \omega_2) \subseteq  B(\omega_1, \omega_2)$.
Since $\cala$ is invariant, by Lemma \ref{posquainv} we find that
$\cala$ is quasi-invariant.  Therefore,
given $\omega_1 \in \Omega_1$,
$\omega_2 \in  \Omega_2$ and
 $y(\omega_1, \omega_2) \in \cala( \omega_1, \omega_2)$
  there exists a complete
orbit $\psi$ such that $\psi(0, \omega_1, \omega_2)
= y(\omega_1, \omega_2)$
and $\psi (t, \omega_1, \omega_2) \in \cala( \theta_{1, t} \omega_1,
\theta_{2, t} \omega_2 )$ for all $t \in \R$.
Since $\cala \in \cald$, we see that
$\psi$ is a $\cald$-complete orbit. Therefore,
by definition,  $y(\omega_1, \omega_2)$
$=\psi(0, \omega_1, \omega_2)
\in B(\omega_1, \omega_2)$. This proves
$ \cala(\omega_1, \omega_2) \subseteq  B(\omega_1, \omega_2)$.
\end{proof}

\begin{lem}
\label{aclem1}
 Let $\cald$ be a    collection of some  families of   nonempty subsets of
$X$ and $\Phi$  be a continuous   cocycle on $X$
over $(\Omega_1,  \{\thonet\}_{t \in \R})$
and
$(\Omega_2, \calftwo, P,  \{\thtwot\}_{t \in \R})$.
Suppose that
$\Phi$ has a  $\cald$-pullback
attractor $\cala = \{\cala (\omega_1, \omega_2): 
\omega_1 \in \Omega_1, \omega_2 \in \Omega_2 \}   \in \cald$. Then
$\Phi$ is $\cald$-pullback asymptotically
compact in $X$.
\end{lem}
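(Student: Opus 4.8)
The plan is to deduce asymptotic compactness from the existence of the attractor exactly as in the classical deterministic theory, working fibrewise over $\Omega_1 \times \Omega_2$. Fix $\omega_1 \in \Omega_1$ and $\omega_2 \in \Omega_2$, and suppose $t_n \to \infty$ together with $x_n \in B(\theta_{1,-t_n}\omega_1, \theta_{2,-t_n}\omega_2)$ for some $B = \{B(\omega_1,\omega_2): \omega_1 \in \Omega_1, \omega_2 \in \Omega_2\} \in \cald$. We must produce a convergent subsequence of $\{\Phi(t_n, \theta_{1,-t_n}\omega_1, \theta_{2,-t_n}\omega_2, x_n)\}$.

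First I would observe that since $x_n \in B(\theta_{1,-t_n}\omega_1, \theta_{2,-t_n}\omega_2)$, the point $\Phi(t_n, \theta_{1,-t_n}\omega_1, \theta_{2,-t_n}\omega_2, x_n)$ lies in the set $\Phi(t_n, \theta_{1,-t_n}\omega_1, \theta_{2,-t_n}\omega_2, B(\theta_{1,-t_n}\omega_1, \theta_{2,-t_n}\omega_2))$. Applying condition (iii) of Definition \ref{defatt} to the family $B$ with $t = t_n \to \infty$, the Hausdorff semi-metric of this set to $\cala(\omega_1,\omega_2)$ tends to $0$, hence so does $d(\Phi(t_n, \theta_{1,-t_n}\omega_1, \theta_{2,-t_n}\omega_2, x_n), \cala(\omega_1,\omega_2))$. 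Therefore I can pick $y_n \in \cala(\omega_1,\omega_2)$ with $d(\Phi(t_n, \theta_{1,-t_n}\omega_1, \theta_{2,-t_n}\omega_2, x_n), y_n) \le d(\Phi(t_n, \theta_{1,-t_n}\omega_1, \theta_{2,-t_n}\omega_2, x_n), \cala(\omega_1,\omega_2)) + \tfrac1n \to 0$.

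Next I would use condition (i) of Definition \ref{defatt}: $\cala(\omega_1,\omega_2)$ is compact, so $\{y_n\}$ has a subsequence $y_{n_k} \to y$ for some $y \in \cala(\omega_1,\omega_2)$. The triangle inequality then gives $\Phi(t_{n_k}, \theta_{1,-t_{n_k}}\omega_1, \theta_{2,-t_{n_k}}\omega_2, x_{n_k}) \to y$, which is the desired convergent subsequence; by Definition \ref{asycomp} this shows $\Phi$ is $\cald$-pullback asymptotically compact in $X$. I do not expect any genuine obstacle here: the proof is short, and the only point requiring a little care is that the attraction in (iii) is stated via the Hausdorff semi-metric rather than pointwise, which is why one chooses the $y_n$ only up to an error $\tfrac1n$; compactness of the fibre $\cala(\omega_1,\omega_2)$ then closes the argument.
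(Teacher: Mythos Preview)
Your proof is correct and follows essentially the same approach as the paper: both use the attraction property (iii) to find points $y_n\in\cala(\omega_1,\omega_2)$ approximating $\Phi(t_n,\theta_{1,-t_n}\omega_1,\theta_{2,-t_n}\omega_2,x_n)$, then invoke compactness of the fibre $\cala(\omega_1,\omega_2)$ to extract a convergent subsequence and finish by the triangle inequality. Your version is in fact slightly cleaner, since you work along the full sequence before extracting a subsequence rather than reindexing at the outset.
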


\begin{proof}
 Let $t_n \to \infty$,  $B \in \cald$
and $x_n \in B(\theta_{1, -t_n} \omega_1, \theta_{2, -t_n}\omega_2)$.
 We will show  that the sequence
$\{ \Phi(t_n, \theta_{1, -t_n} \omega_1, 
\theta_{2, -t_n}\omega_2, x_n \}_{n=1}^\infty$
has a convergent subsequence in $X$.
By the attraction property of $\cala$,
for every $\omega_1 \in \Omega_1$,
$\omega_2 \in \Omega_2$ and
  $m \in \N$, there
   exist $z_m \in \cala(\omega_1, \omega_2)$
and     $ t_{n_m} \in     \{t_n: n\in \N \}$ with $t_{n_m} \ge m$  and
$  x_{n_m}   \in   \{x_n: n \in \N \}$ such that
\be
\label{patt_a1}
d (\Phi(t_{n_m}, \theta_{1, -t_{n_m}}\omega_1,
\theta_{2, -t_{n_m}}\omega_2, x_{n_m}), z_m) \le {\frac 1m}.
\ee
The compactness of $\cala(\omega_1, \omega_2)$ implies that
$\{z_m\}_{m=1}^\infty$ has a convergent  subsequence
in $X$, which along with \eqref{patt_a1} shows that
$\Phi(t_{n_m}, \theta_{1, -t_{n_m}}\omega_1,
\theta_{2, -t_{n_m}}\omega_2, x_{n_m} )$
has a convergent subsequence, and hence
$\Phi$ is $\cald$-pullback asymptotically
compact in $X$.
\end{proof}

\begin{lem}
\label{attlem1}
 Let $\cald$ be a  neighborhood closed  collection
  of some  families of   nonempty subsets of
$X$,  and $\Phi$  
be a continuous   cocycle on $X$
over $(\Omega_1,  \{\thonet\}_{t \in \R})$
and
$(\Omega_2, \calftwo, P,  \{\thtwot\}_{t \in \R})$.
Suppose that
$\Phi$ has a  $\cald$-pullback
attractor $\cala = \{\cala (\omega_1, \omega_2): 
\omega_1 \in \Omega_1, \omega_2 \in \Omega_2 \}  
 \in \cald$. Then  $\Phi$ has a  closed
     $\cald$-pullback absorbing set
  $K $    that is measurable
with respect to the $P$-completion of $\calftwo$ in $\Omega_2$.
\end{lem}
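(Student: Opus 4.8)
The plan is to take $K$ to be a closed neighborhood of the attractor $\cala$, of radius small enough that $K$ still lies in $\cald$. Apply Definition \ref{defepsneigh1} (neighborhood closedness of $\cald$) to $\cala\in\cald$: there is $\varepsilon>0$ such that every family $\{B(\omega_1,\omega_2):\omega_1\in\Omega_1,\ \omega_2\in\Omega_2\}$ with $\emptyset\neq B(\omega_1,\omega_2)\subseteq\caln_\varepsilon(\cala(\omega_1,\omega_2))$ for all $\omega_1,\omega_2$ belongs to $\cald$. I would then define
\[
K(\omega_1,\omega_2)=\{x\in X:\ d(x,\cala(\omega_1,\omega_2))\le \varepsilon/2\},\qquad \omega_1\in\Omega_1,\ \omega_2\in\Omega_2,
\]
and set $K=\{K(\omega_1,\omega_2):\omega_1\in\Omega_1,\omega_2\in\Omega_2\}$. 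Each $K(\omega_1,\omega_2)$ is closed, being a sublevel set of the $1$-Lipschitz function $x\mapsto d(x,\cala(\omega_1,\omega_2))$, and nonempty since it contains $\cala(\omega_1,\omega_2)$; moreover $K(\omega_1,\omega_2)\subseteq\caln_\varepsilon(\cala(\omega_1,\omega_2))$ because $\varepsilon/2<\varepsilon$, so $K\in\cald$ by the choice of $\varepsilon$.

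Next I would verify that $K$ is $\cald$-pullback absorbing. Fix $\omega_1\in\Omega_1$, $\omega_2\in\Omega_2$ and $B\in\cald$. By condition (iii) in Definition \ref{defatt},
\[
\lim_{t\to\infty}d\Big(\Phi\big(t,\theta_{1,-t}\omega_1,\theta_{2,-t}\omega_2,B(\theta_{1,-t}\omega_1,\theta_{2,-t}\omega_2)\big),\ \cala(\omega_1,\omega_2)\Big)=0,
\]
so there is $T=T(B,\omega_1,\omega_2)>0$ such that for every $t\ge T$ the Hausdorff semi-distance above is $<\varepsilon/2$; equivalently, every point of $\Phi(t,\theta_{1,-t}\omega_1,\theta_{2,-t}\omega_2,B(\theta_{1,-t}\omega_1,\theta_{2,-t}\omega_2))$ lies within distance $\varepsilon/2$ of $\cala(\omega_1,\omega_2)$, hence in $K(\omega_1,\omega_2)$. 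Thus \eqref{abs1} holds, and combined with $K\in\cald$ this shows $K$ is a $\cald$-pullback absorbing set for $\Phi$. Note that no asymptotic compactness is needed here; only the attraction property of $\cala$.

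It remains to establish that $K$ is measurable with respect to the $P$-completion $\overline{\calftwo}$ of $\calftwo$, and this is the one point requiring care. Fix $\omega_1\in\Omega_1$ and $x\in X$. Since $\cala$ is measurable with respect to $\overline{\calftwo}$, the map $(y,\omega_2)\mapsto d(y,\cala(\omega_1,\omega_2))$ is continuous in $y$ and $\overline{\calftwo}$-measurable in $\omega_2$, hence — because $(\Omega_2,\overline{\calftwo},P)$ is complete and $X$ is separable — jointly $\calb(X)\otimes\overline{\calftwo}$-measurable. Consequently $M:=\{(y,\omega_2):\ d(y,\cala(\omega_1,\omega_2))\le\varepsilon/2\}\in\calb(X)\otimes\overline{\calftwo}$, and for every $c>0$,
\[
\{\omega_2\in\Omega_2:\ d(x,K(\omega_1,\omega_2))<c\}=\pi_{\Omega_2}\big(M\cap(\{y\in X:d(x,y)<c\}\times\Omega_2)\big).
\]
By the measurable projection theorem for Polish spaces over a complete probability space, this projection lies in $\overline{\calftwo}$; hence $\omega_2\mapsto d(x,K(\omega_1,\omega_2))$ is $\overline{\calftwo}$-measurable, so $K$ is a closed measurable $\cald$-pullback absorbing set. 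The main (indeed essentially only) obstacle is this measurability step: because in a general metric space the distance to $\caln_\delta(\cala(\omega_1,\omega_2))$ need not equal the truncation $(d(\cdot,\cala(\omega_1,\omega_2))-\delta)^+$, one cannot rely on an elementary distance formula and must instead invoke completeness of the probability space together with the projection theorem; everything else reduces to neighborhood closedness of $\cald$ and the attraction property in Definition \ref{defatt}.
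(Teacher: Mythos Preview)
Your proof is correct and follows essentially the same route as the paper's: define $K$ as a closed $\varepsilon_1$-neighborhood of $\cala$ for some $\varepsilon_1<\varepsilon$, use neighborhood closedness to get $K\in\cald$, and use the attraction property to get absorption. For the measurability step the paper argues, as you do, that $(y,\omega_2)\mapsto d(y,\cala(\omega_1,\omega_2))$ is jointly $\overline{\calftwo}\times\calb(X)$-measurable, so that the graph of $\omega_2\mapsto K(\omega_1,\omega_2)$ lies in $\overline{\calftwo}\times\calb(X)$; it then concludes by citing Theorem~8.1.4 of Aubin--Frankowska (closed-valued maps with measurable graph over a complete probability space and a Polish target are measurable), whereas you unfold this into an explicit application of the measurable projection theorem---the two are essentially the same argument.
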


\begin{proof} 
Since  $\cala  \in \cald$ and $\cald$ is  neighborhood closed,
 there exists  
$\varepsilon>0$   such that  
\be\label{attlem1_p1a1}
 \{ {B}(\omega_1, \omega_2) :
 {B}(\omega_1, \omega_2) \mbox{ is a  nonempty subset of }
 \caln_\varepsilon ( \cala (\omega_1, \omega_2) ),  \forall \
 \omega_1 \in \Omega_1,  \forall\  \omega_2
\in  \Omega_2\} \in \cald.
\ee
Choose an arbitrary  number $\varepsilon_1 \in (0, \varepsilon)$.
Given $\omega_1 \in \Omega_1$ and $\omega_2 \in \Omega_2$,
let $K(\omega_1, \omega_2)$ be the closed neighborhood of
$\cala(\omega_1, \omega_2)$ with radius $\varepsilon_1$, that is,
\be\label{attlem1_p2}
K(\omega_1, \omega_2) = \{x \in X: \  d(x, \cala(\omega_1, \omega_2))
\le  \varepsilon_1\}.
\ee
By \eqref{attlem1_p1a1}  we have
$K=\{K(\omega_1, \omega_2): \omega_1 \in \Omega_1, \omega_2 \in \Omega_2 \}
\in \cald$.
 By the attraction property of $\cala$, we find that
$K$ is a closed $\cald$-pullback  absorbing set of $\Phi$.
We now prove the measurability of $K$  with respect to
the $P$-completion of $\calftwo$,  that is denoted by
${\bar{\calftwo}}$.
Given $\omega_1 \in \Omega_1$
and $x \in X$, by the measurability of $\cala$ we find the mapping:
$\omega_2 \in \Omega_2 \to d(x, \cala(\omega_1, \omega_2))$,
is measurable  with respect to  ${\bar{\calftwo}}$. In addition,
the mapping: $ x \in X \to   d(x, \cala(\omega_1, \omega_2))$
is continuous. Therefore  the mapping
$ (\omega_2, x )   \to d(x, \cala(\omega_1, \omega_2))$
is measurable with respect to $\bar{\calftwo} \times \calb (X)$.
Thus we have the set $\{ (\omega_2, x) \in \Omega_2 \times X: \
 x \in K (\omega_1, \omega_2 ) \} $
  $ = \{ (\omega_2, x) \in \Omega_2 \times X: \
d(x, \cala(\omega_1, \omega_2)) \le  \varepsilon_1\} \in
 \bar{\calftwo} \times \calb (X)$.
 This shows that the graph of the
 set-valued
 mapping: $ \omega_2 \to  K(\omega_1, \omega_2)$,
 is in
$\bar{\calftwo} \times \calb (X)$. Then
the measurability of $K$
with respect to
 $\bar{\calftwo}$ follows from
Theorem 8.1.4 in \cite{aubin1}.
\end{proof}

\begin{lem}
\label{attlem2}
 Let $\cald$ be an inclusion-closed  collection of some  families of   nonempty subsets of
$X$ and $\Phi$  be a continuous   cocycle on $X$
over $(\Omega_1,  \{\thonet\}_{t \in \R})$
and
$(\Omega_2, \calftwo, P,  \{\thtwot\}_{t \in \R})$.
 Suppose  that $K = \{K(\omega_1, \omega_2): \omega_1 \in \Omega_1,
  \omega_2 \in \Omega_2 ) \}    \in \cald $ is a   closed
    measurable (w.r.t. the $P$-completion of $\calftwo$)
     $\cald$-pullback absorbing set  for  $\Phi$  in
$\cald$ and $\Phi$ is $\cald$-pullback asymptotically
compact in $X$. Then $\Phi$ has a unique $\cald$-pullback
attractor $\cala = \{\cala (\omega_1, \omega_2): \omega_1 
\in \Omega_1, \omega_2 \in \Omega_2 \}   \in \cald$ which is
given by,
for all $\omega_1 \in \Omega_1$   and  $\omega_2 \in \Omega_2$,
\be
\label{att1}
\cala (\omega_1, \omega_2) =
\Omega(K, \omega_1, \omega_2)
=  \bigcap_{\tau \ge 0}
\  \overline{ \bigcup_{t\ge \tau} \Phi(t, \theta_{1,-t} \omega_1, 
\theta_{2, -t} \omega_2, K(\theta_{1,-t} \omega_1, \theta_{2,-t}\omega_2  ))}.
\ee
\end{lem}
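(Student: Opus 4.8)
The plan is to define $\cala(\omega_1,\omega_2):=\Omega(K,\omega_1,\omega_2)$ by the right-hand side of \eqref{att1} and to verify, one by one, properties (i)--(iii) of Definition \ref{defatt}, the membership $\cala\in\cald$, and uniqueness. Nonemptiness and compactness of each $\cala(\omega_1,\omega_2)$, as well as the attraction
\[
\lim_{t\to\infty} d\Big(\Phi\big(t,\theta_{1,-t}\omega_1,\theta_{2,-t}\omega_2,K(\theta_{1,-t}\omega_1,\theta_{2,-t}\omega_2)\big),\ \cala(\omega_1,\omega_2)\Big)=0,
\]
follow immediately from Lemma \ref{omegacomp}(i) applied with $D=K$ (legitimate because $K\in\cald$ and $\Phi$ is $\cald$-pullback asymptotically compact), the last display being exactly \eqref{pomegacomp2}. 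Since $K\in\cald$, $K(\omega_1,\omega_2)$ is closed, and $K$ is a $\cald$-pullback absorbing set, the absorbing property used with $B=K$ supplies $t_0=T(K,\omega_1,\omega_2)$ with $\Phi(t,\theta_{1,-t}\omega_1,\theta_{2,-t}\omega_2,K(\theta_{1,-t}\omega_1,\theta_{2,-t}\omega_2))\subseteq K(\omega_1,\omega_2)$ for all $t\ge t_0$; this is precisely the hypothesis of Lemma \ref{omegacomp}(iii), so $\cala=\Omega(K)$ is invariant (property (ii)) and satisfies $\cala(\omega_1,\omega_2)\subseteq K(\omega_1,\omega_2)$. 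Because each $\cala(\omega_1,\omega_2)$ is then a nonempty subset of $K(\omega_1,\omega_2)$ and $\cald$ is inclusion-closed with $K\in\cald$, we obtain $\cala\in\cald$.

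For the attraction property (iii), fix $B\in\cald$, $\omega_1\in\Omega_1$ and $\omega_2\in\Omega_2$. For $s\ge0$, the absorbing property applied at the base point $(\theta_{1,-s}\omega_1,\theta_{2,-s}\omega_2)$ gives $T_s=T(B,\theta_{1,-s}\omega_1,\theta_{2,-s}\omega_2)$ such that $\Phi(t-s,\theta_{1,-t}\omega_1,\theta_{2,-t}\omega_2,B(\theta_{1,-t}\omega_1,\theta_{2,-t}\omega_2))\subseteq K(\theta_{1,-s}\omega_1,\theta_{2,-s}\omega_2)$ for $t\ge s+T_s$, where we used $\theta_{i,-t}\omega_i=\theta_{i,-(t-s)}\theta_{i,-s}\omega_i$ ($i=1,2$). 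Applying $\Phi(s,\theta_{1,-s}\omega_1,\theta_{2,-s}\omega_2,\cdot)$ and the cocycle identity of Definition \ref{ds1}(iii) turns this into
\[
\Phi\big(t,\theta_{1,-t}\omega_1,\theta_{2,-t}\omega_2,B(\theta_{1,-t}\omega_1,\theta_{2,-t}\omega_2)\big)\subseteq \Phi\big(s,\theta_{1,-s}\omega_1,\theta_{2,-s}\omega_2,K(\theta_{1,-s}\omega_1,\theta_{2,-s}\omega_2)\big),\qquad t\ge s+T_s.
\]
By monotonicity of the Hausdorff semi-metric this yields, for every $s\ge0$,
\[
\limsup_{t\to\infty} d\Big(\Phi\big(t,\theta_{1,-t}\omega_1,\theta_{2,-t}\omega_2,B(\theta_{1,-t}\omega_1,\theta_{2,-t}\omega_2)\big),\cala(\omega_1,\omega_2)\Big)\le d\Big(\Phi\big(s,\theta_{1,-s}\omega_1,\theta_{2,-s}\omega_2,K(\theta_{1,-s}\omega_1,\theta_{2,-s}\omega_2)\big),\cala(\omega_1,\omega_2)\Big),
\]
and letting $s\to\infty$ the right-hand side vanishes by the limit recorded in the first paragraph; this proves (iii).

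The remaining and most delicate point is the measurability of $\cala$ with respect to the $P$-completion $\bar\calftwo$ of $\calftwo$. Arguing as in the compactness part of the proof of Lemma \ref{omegacomp}(i) --- take near-minimizers, extract a convergent subsequence by $\cald$-pullback asymptotic compactness, and place its limit inside $\Omega(K,\omega_1,\omega_2)$ via Remark \ref{defomegalimitrem1} --- one shows that only integer times are needed in the $\Omega$-limit, and hence that for every $x\in X$,
\[
d\big(x,\cala(\omega_1,\omega_2)\big)=\lim_{s\to\infty}\ \inf_{t\in\N,\ t\ge s}\ d\Big(x,\ \Phi\big(t,\theta_{1,-t}\omega_1,\theta_{2,-t}\omega_2,K(\theta_{1,-t}\omega_1,\theta_{2,-t}\omega_2)\big)\Big).
\]
For fixed integer $t$ and fixed $\omega_1$, the map $\omega_2\mapsto K(\theta_{1,-t}\omega_1,\theta_{2,-t}\omega_2)$ is a $\bar\calftwo$-measurable closed-valued map --- the measurable $K$ composed with the measurable $\omega_2\mapsto\theta_{2,-t}\omega_2$ --- so it possesses $\bar\calftwo$-measurable selections $k_n$, $n\in\N$, whose union is dense in it (Castaing representation). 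Continuity of $\Phi(t,\omega_1,\omega_2,\cdot)$ from Definition \ref{ds1}(iv) then gives $d(x,\Phi(t,\theta_{1,-t}\omega_1,\theta_{2,-t}\omega_2,K(\theta_{1,-t}\omega_1,\theta_{2,-t}\omega_2)))=\inf_{n}d(x,\Phi(t,\theta_{1,-t}\omega_1,\theta_{2,-t}\omega_2,k_n(\omega_2)))$, and each term on the right is $\bar\calftwo$-measurable in $\omega_2$ by the joint measurability of Definition \ref{ds1}(i) composed with the measurable maps $\omega_2\mapsto\theta_{2,-t}\omega_2$ and $\omega_2\mapsto k_n(\omega_2)$. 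Taking countable infima (over $n$, then over $t\ge s$) and the limit in $s$ shows that $\omega_2\mapsto d(x,\cala(\omega_1,\omega_2))$ is $\bar\calftwo$-measurable; since it is $1$-Lipschitz in $x$, it is Carath\'eodory, so the graph $\{(\omega_2,x):x\in\cala(\omega_1,\omega_2)\}$, which equals $\{(\omega_2,x):d(x,\cala(\omega_1,\omega_2))=0\}$ as $\cala(\omega_1,\omega_2)$ is closed, lies in $\bar\calftwo\times\calb(X)$, and Theorem 8.1.4 of \cite{aubin1} (already invoked in Lemma \ref{attlem1}) gives the $\bar\calftwo$-measurability of $\cala$.

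Uniqueness of the $\cald$-pullback attractor within $\cald$, and with it the identification \eqref{att1}, is immediate from Remark \ref{defattrem1}. I expect this measurability step to be the main obstacle: the reduction to countable operations genuinely uses $\cald$-pullback asymptotic compactness, and one must be careful in routing the $\omega_2$-dependence of the Castaing selections $k_n$ through $\theta_2$ so that the joint-measurability axiom of Definition \ref{ds1}(i) applies; by contrast, invariance, the inclusion $\cala\subseteq K$, and the attraction property are routine bookkeeping built on Lemma \ref{omegacomp}.
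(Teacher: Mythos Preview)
Your treatment of compactness, invariance, the inclusion $\cala\in\cald$, attraction of every $B\in\cald$, and uniqueness is correct and tracks the paper's proof (Lemma~\ref{omegacomp} plus the absorbing/cocycle bookkeeping you spell out for~(iii)).

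The measurability step, however, has a gap. Your near-minimizer argument establishes only one inequality: picking $y_s\in B_s:=\bigcup_{t\in\N,\,t\ge s}\Phi(t,\theta_{1,-t}\omega_1,\theta_{2,-t}\omega_2,K(\theta_{1,-t}\omega_1,\theta_{2,-t}\omega_2))$ with $d(x,y_s)\le d(x,B_s)+1/s$ and passing to a limit via asymptotic compactness places the limit in $\cala$, giving $d(x,\cala)\le\lim_{s}d(x,B_s)$. The reverse inequality needs $\cala(\omega_1,\omega_2)\subseteq\overline{B_s}$, equivalently that ``only integer times are needed in the $\Omega$-limit''; this does \emph{not} follow from the compactness argument of Lemma~\ref{omegacomp}(i) but from the invariance and the inclusion $\cala\subseteq K$ you have already obtained: for each integer $m$, $\cala(\omega_1,\omega_2)=\Phi(m,\theta_{1,-m}\omega_1,\theta_{2,-m}\omega_2,\cala(\theta_{1,-m}\omega_1,\theta_{2,-m}\omega_2))\subseteq\Phi(m,\ldots,K(\ldots))\subseteq B_s$ whenever $m\ge s$. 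Once you invoke invariance here, your displayed formula for $d(x,\cala)$ is correct and the Castaing-selection route goes through.

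For comparison, the paper's measurability proof is set-valued throughout: it uses invariance exactly as above to obtain the integer-time identity $\cala(\omega_1,\omega_2)=\bigcap_{n\in\N}\overline{\bigcup_{m\ge n}\Phi(m,\ldots,K(\ldots))}$, and then applies Theorems~8.2.8 and~8.2.4 of \cite{aubin1} directly to the closed-valued maps $\omega_2\mapsto\overline{\Phi(m,\ldots,K(\ldots))}$ and their countable unions/intersections. Your route instead reduces to the scalar map $\omega_2\mapsto d(x,\cala(\omega_1,\omega_2))$ via Castaing selections of $K(\theta_{1,-t}\omega_1,\theta_{2,-t}\cdot)$ and the joint measurability of $\Phi$, finishing with Theorem~8.1.4 of \cite{aubin1}. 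Both approaches rest on the same two pillars---invariance to pass to countable time, and continuity of $\Phi(t,\omega_1,\omega_2,\cdot)$ to push measurability through the image---so the difference lies only in which Aubin--Frankowska machinery is cited.
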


\begin{proof}
Note that the
      uniqueness of $\cald$-pullback attractor
       follows directly from Definition  \ref{defatt}
       as stated in Remark \ref{defattrem1}.
Since $K$ is a closed $\cald$-pullback absorbing set of $\Phi$,
it follows from Lemma \ref{omegacomp}  that,
for each $\omega_1 \in \Omega_1$ and $\omega_2 \in \Omega_2$,
$\cala(\omega_1, \omega_2)$
$=\Omega (K, \omega_1, \omega_2)$
is a  nonempty compact subset of $X$
and $\cala(\omega_1, \omega_2)\subset K (\omega_1, \omega_2)$.
Since $\cald$ is   inclusion-closed and $K \in \cald$, we see
that $\cala =
\{\cala(\omega_1, \omega_2): \omega_1 \in \Omega_1,   \omega_2 \in \Omega_2 \}
\in \cald$.
Further,
   Lemma \ref{omegacomp} implies  that     $\cala  $
is invariant. Therefore,
the proof will be completed if we can show
 the measurability of $\cala$  with respect to the $P$-completion of
  $\calftwo$ and the attraction of $\cala$ on all members of $\cald$.

 We first prove that $\cala$ attracts all elements of $\cald$.
  By Lemma \ref{omegacomp} we find that  $\cala$    attracts $K$,
  that is,
 given $\varepsilon>0$,
  $\omega_1 \in \Omega_1$   and $\omega_2 \in \Omega_2$,
    there exists
   $t_1 =t_1(\varepsilon, \omega_1, \omega_2)>0$
   such that
   \be
   \label{patt5}
   d (\Phi (t_1, \theta_{1, -t_1}\omega_1, \theta_{2, -t_1} \omega_2, 
   K(\theta_{1, -t_1}\omega_1, \theta_{2, -t_1} \omega_2) ), 
   \ \cala (\omega_1,  \omega_2) ) < \varepsilon.
 \ee
 Then by the  absorption property of $K$ we  get that,
 for every $D \in \cald$, there exists $T=T(D, \omega_1, \omega_2, \varepsilon )$ 
 $ >0$ such that  for all $t > T$,
 $$
 \Phi (t,    \theta_{1, -t}(\theta_{1, -t_1} \omega_1),
 \theta_{2, -t} (\theta_{2,-t_1} \omega_2),
      D( \theta_{1, -t} (\theta_{1,-t_1} \omega_1),
 \theta_{2, -t}  (\theta_{2, -t_1} \omega_2 )))
 \subseteq
 K(\theta_{1,-t_1} \omega_1, \theta_{2,-t_1} \omega_2),
 $$
 which implies that, for all $t>T$,
 $$
 \Phi (t +t_1, \theta_{1, -t-t_1}\omega_1,
 \theta_{2, -t-t_1} \omega_2, D( \theta_{1, -t-t_1}\omega_1,
 \theta_{2, -t-t_1} \omega_2 ) )
 $$
 $$
 =
 \Phi (t_1, \theta_{1, -t_1}\omega_1,
 \theta_{2, -t_1} \omega_2,
 \Phi (t,    \theta_{1, -t-t_1}\omega_1,
 \theta_{2, -t-t_1} \omega_2,
      D( \theta_{1, -t-t_1}\omega_1,
 \theta_{2, -t-t_1} \omega_2 ) )
 $$
 \be\label{patt6}
 \subseteq
 \Phi (t_1, \theta_{1, -t_1}\omega_1,
 \theta_{2, -t_1} \omega_2,
 K(\theta_{1,-t_1} \omega_1, \theta_{2,-t_1} \omega_2)).
 \ee
 It follows  from \eqref{patt5}-\eqref{patt6} that, for all
 $t > T+t_1$,
  $$
  d (\Phi (t, \theta_{1, -t}\omega_1, \theta_{2, -t} \omega_2, 
  D(\theta_{1, -t}\omega_1, \theta_{2, -t} \omega_2) ), 
  \ \cala (\omega_1,  \omega_2) ) < \varepsilon.
$$
This shows that
 $\cala$  attracts all members of $\cald$.

 We next prove the measurability  of $\cala$ with respect to the $P$-completion of
  $\calftwo$.
It follows from
\eqref{att1} that
\be
\label{patt40}
\cala (\omega_1, \omega_2) =  \bigcap_{n=1}^\infty
\  \overline{ \bigcup_{t\ge n} \Phi(t, \theta_{1,-t} \omega_1, 
\theta_{2, -t} \omega_2, K(\theta_{1,-t} \omega_1, \theta_{2,-t}\omega_2  ))},
\ee
where $n\in \N$.  By  \eqref{patt40} we  find that, for $n, m \in \N$,
 \be
\label{patt41}
  \bigcap_{n=1}^\infty
\  \overline{ \bigcup_{m =n}^\infty \Phi(m, \theta_{1,-m} \omega_1, 
\theta_{2, -m} \omega_2, K(\theta_{1,-m} \omega_1, \theta_{2,-m}\omega_2  ))} 
 \subseteq  \cala (\omega_1, \omega_2) .
\ee
On the  other hand, since
$\cala(\omega_1, \omega_2)
\subseteq  K(\omega_1, \omega_2) $
for all $\omega_1 \in \Omega_1$ and
$\omega_2 \in \Omega_2$, we have for all $ m \in \N$,
$$
 \Phi(m, \theta_{1,-m} \omega_1, \theta_{2, -m} \omega_2, 
 \cala(\theta_{1,-m} \omega_1, \theta_{2,-m}\omega_2  ))
\subseteq
\Phi(m, \theta_{1,-m} \omega_1, \theta_{2, -m} \omega_2, 
K(\theta_{1,-m} \omega_1, \theta_{2,-m}\omega_2  ))
$$
which along with
 the invariance of $\cala$
yields  that for all $m \in \N$,
\be
\label{patt42}
\cala({\omega_1, \omega_2})
 \subseteq
\Phi(m, \theta_{1,-m} \omega_1, \theta_{2, -m} \omega_2, 
K(\theta_{1,-m} \omega_1, \theta_{2,-m}\omega_2  )).
\ee
Therefore we get that
$$
    \cala (\omega_1, \omega_2) \subseteq
  \bigcap_{n=1}^\infty
\  \overline{ \bigcup_{m =n}^\infty \Phi(m, \theta_{1,-m} \omega_1, 
\theta_{2, -m} \omega_2, K(\theta_{1,-m} \omega_1, \theta_{2,-m}\omega_2  ))}  ,
$$
from which  and  \eqref{patt41}  we obtain
\be
\label{patt43}
    \cala (\omega_1, \omega_2) =
  \bigcap_{n=1}^\infty
\  \overline{ \bigcup_{m =n}^\infty \Phi(m, \theta_{1,-m} \omega_1, 
\theta_{2, -m} \omega_2, K(\theta_{1,-m} \omega_1, \theta_{2,-m}\omega_2  ))}  .
\ee
By conditions (i) and (iv) in  Definition \ref{ds1}, and the
measurability of $\theta_2$, we find that,
 for every fixed  $m \in \N$  and  $\omega_1 \in \Omega_1$,
the mapping
 $\Phi(m, \theta_{1, -m} \omega_1,
\theta_{2, -m} \omega_2, x)$
    is $( \calftwo, \calb (X))$-measurable in $\omega_2$ and
  is  continuous in $x$. In addition, the set-valued  mapping $\omega_2 $
  $\to K(\theta_{1,-m} \omega_1, \theta_{2,-m}\omega_2 )$
  has   closed images and  is measurable
  with respect to the $P$-completion of $\calftwo$. Then it follows from \cite{aubin1}
  (Theorem 8.2.8)  that
  for every fixed  $m \in \N$  and  $\omega_1 \in \Omega_1$,
 the mapping
 $$\omega_2 \to
    \overline{
 \Phi(m, \theta_{1,-m} \omega_1, \theta_{2, -m} \omega_2, K(\theta_{1,-m}
  \omega_1, \theta_{2,-m}\omega_2  )) } $$
 is measurable with respect to the $P$-completion of $\calftwo$. Therefore, by 
 Theorem 8.2.4 in \cite{aubin1}, we find
  that, for every fixed $\omega_1 \in \Omega_1$,  the set-valued  mapping
 $$
 \omega_2 \to \bigcap_{n=1}^\infty
\  \overline{ \bigcup_{m =n}^\infty
 \overline{\Phi(m, \theta_{1,-m} \omega_1, \theta_{2, -m} \omega_2, 
 K(\theta_{1,-m} \omega_1, \theta_{2,-m}\omega_2  )) }}
$$
 is measurable with respect to the $P$-completion of $\calftwo$, which implies that
 the mapping
\be \label{patt44}
 \omega_2 \to \bigcap_{n=1}^\infty
\  \overline{ \bigcup_{m =n}^\infty
  \Phi(m, \theta_{1,-m} \omega_1, \theta_{2, -m} \omega_2, 
  K(\theta_{1,-m} \omega_1, \theta_{2,-m}\omega_2  )) }
\ee
 is measurable with respect to the $P$-completion of $\calftwo$.
 Then,   for every fixed $\omega_1 \in \Omega_1$,
 the measurability
  of  $\cala (\omega_1, \cdot)$
 with respect to the $P$-completion of $\calftwo$ follows from
  \eqref{patt43} and \eqref{patt44}.
\end{proof}

 Note that  the attractor constructed in Lemma 
   \ref{attlem2} is 
    measurable with respect to the $P$-completion of $\calftwo$
    rather than  $\calftwo$ itself.
    This is because 
   the measurability  of the attractor is obtained   
by  using Theorem 8.2.4 in \cite{aubin1}, and this 
   theorem requires  that the probability space is complete.

\begin{lem}
\label{attlem3}
Let all assumptions of Lemma  \ref{attlem2} hold. Then the unique
  $\cald$-pullback
attractor $\cala$ of $\Phi$ in $\cald$ can be characterized by,
 for all $\omega_1 \in \Omega_1$   and  $\omega_2 \in \Omega_2$,
\be\label{attlem3_a1}
\cala (\omega_1, \omega_2) =
  \bigcup_{B\in \cald} \Omega (B, \omega_1, \omega_2),
  \ee
  where $\Omega (B, \omega_1, \omega_2)$, as given by
  \eqref{omegalimit}, is the $\Omega$-limit set of $B$
  corresponding to $\omega_1$   and $\omega_2$.
  \end{lem}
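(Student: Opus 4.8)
The plan is to squeeze $\bigcup_{B\in\cald}\Omega(B,\omega_1,\omega_2)$ between the two sides of the identity $\cala(\omega_1,\omega_2)=\Omega(K,\omega_1,\omega_2)$ that Lemma \ref{attlem2} has already provided. One inclusion is immediate: since $K\in\cald$, the set $\Omega(K,\omega_1,\omega_2)$ is one of the members of the union on the right-hand side of \eqref{attlem3_a1}, so $\cala(\omega_1,\omega_2)=\Omega(K,\omega_1,\omega_2)\subseteq\bigcup_{B\in\cald}\Omega(B,\omega_1,\omega_2)$ for all $\omega_1\in\Omega_1$ and $\omega_2\in\Omega_2$.

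For the opposite inclusion I would fix an arbitrary $B\in\cald$ together with $\omega_1\in\Omega_1$ and $\omega_2\in\Omega_2$, and show $\Omega(B,\omega_1,\omega_2)\subseteq\cala(\omega_1,\omega_2)$. Pick $y\in\Omega(B,\omega_1,\omega_2)$; by Remark \ref{defomegalimitrem1} there are sequences $t_n\to\infty$ and $x_n\in B(\theta_{1,-t_n}\omega_1,\theta_{2,-t_n}\omega_2)$ with $\Phi(t_n,\theta_{1,-t_n}\omega_1,\theta_{2,-t_n}\omega_2,x_n)\to y$. Condition (iii) of Definition \ref{defatt} says that $\cala$ attracts $B$, hence $d(\Phi(t_n,\theta_{1,-t_n}\omega_1,\theta_{2,-t_n}\omega_2,x_n),\cala(\omega_1,\omega_2))\to 0$; letting $n\to\infty$ and using that $\cala(\omega_1,\omega_2)$ is compact, hence closed, forces $d(y,\cala(\omega_1,\omega_2))=0$, i.e.\ $y\in\cala(\omega_1,\omega_2)$. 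Taking the union over $B\in\cald$ then gives $\bigcup_{B\in\cald}\Omega(B,\omega_1,\omega_2)\subseteq\cala(\omega_1,\omega_2)$, and combined with the first inclusion this yields \eqref{attlem3_a1}.

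I do not anticipate a genuine obstacle here; the argument is essentially a bookkeeping exercise built on results already in hand. The only points needing a moment's care are that the attraction property has to be invoked for a \emph{general} member $B$ of $\cald$ (which is exactly what clause (iii) of the definition of a $\cald$-pullback attractor supplies), and that one must use the closedness of the compact fibre $\cala(\omega_1,\omega_2)$ to pass the limit through the Hausdorff semi-distance. An alternative route for the second inclusion would be to appeal to Lemma \ref{omegacomp}(i) — $\Omega(B,\omega_1,\omega_2)$ is compact and attracts $B$ — but the direct limiting argument above is shorter and self-contained.
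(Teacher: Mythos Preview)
Your proposal is correct and follows essentially the same approach as the paper's proof: both establish the inclusion $\cala\subseteq\bigcup_{B\in\cald}\Omega(B,\omega_1,\omega_2)$ from $K\in\cald$ and \eqref{att1}, and both prove the reverse inclusion by picking $y\in\Omega(B,\omega_1,\omega_2)$, invoking Remark \ref{defomegalimitrem1} to get approximating sequences, applying the attraction property of $\cala$, and using compactness of $\cala(\omega_1,\omega_2)$ to conclude $y\in\cala(\omega_1,\omega_2)$.
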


  \begin{proof}
  It is evident from \eqref{att1} that
  $\cala (\omega_1, \omega_2) \subseteq
  \bigcup\limits_{B\in \cald} \Omega (B, \omega_1, \omega_2)$
  since the $\cald$-pullback absorbing set $K$ is in $\cald$.
  So we  only need to show the reverse inclusion relation.
  Let $y \in \bigcup\limits_{B\in \cald} \Omega (B, \omega_1, \omega_2)$.
  Then there exists $B \in \cald$ such that $y \in \Omega (B, \omega_1, \omega_2)$.
  It follows from Remark \ref{defomegalimitrem1} that
  there exist $t_n \to \infty$ and $x_n \in B(\theta_{1, -t_n} 
  \omega_1, \theta_{2, -t_n} \omega_2 )$ such that
  \be
  \label{pattlem3_1}
  \lim_{n \to \infty} \Phi (t_n, \theta_{1, -t_n}\omega_1,
  \theta_{2, -t_n} \omega_2, x_n ) =y.
  \ee
  By the attraction property of $\cala$ we get
  \be
  \label{pattlem3_2}
  \lim_{n \to \infty} d \left (
  \Phi (t_n, \theta_{1, -t_n}\omega_1,
  \theta_{2, -t_n} \omega_2,
   B(\theta_{1, -t_n} \omega_1, \theta_{2, -t_n} \omega_2 )),
   \cala (\omega_1, \omega_2 ) \right ) =0.
  \ee
  From \eqref{pattlem3_1}-\eqref{pattlem3_2} we obtain  that
  $  d \left (y,
   \cala (\omega_1, \omega_2 ) \right ) =0$, and hence
   $y \in \cala(\omega_1, \omega_2)$ due to the compactness
   of $\cala(\omega_1, \omega_2)$.
  This shows that  $  \bigcup\limits_{B\in \cald}
  \Omega (B, \omega_1, \omega_2)\subseteq \cala (\omega_1, \omega_2)$
  and thus \eqref{attlem3_a1} follows.
  \end{proof}

From   Lemma  \ref{attstr1} up to Lemma \ref{attlem3},  we   immediately
obtain the following
  main result of this section.

\begin{thm}
\label{att}
 Let $\cald$ be a  neighborhood closed  collection of some 
  families of   nonempty subsets of
$X$   and $\Phi$  be a continuous   cocycle on $X$
over $(\Omega_1,  \{\thonet\}_{t \in \R})$
and
$(\Omega_2, \calftwo, P,  \{\thtwot\}_{t \in \R})$.
Then
$\Phi$ has a  $\cald$-pullback
attractor $\cala$  in $\cald$
if and only if
$\Phi$ is $\cald$-pullback asymptotically
compact in $X$ and $\Phi$ has a  closed
   measurable (w.r.t. the $P$-completion of $\calftwo$)
     $\cald$-pullback absorbing set
  $K$ in $\cald$.
  The $\cald$-pullback
attractor $\cala$   is unique   and is given  by,
for each $\omega_1  \in \Omega_1$   and
$\omega_2 \in \Omega_2$,
\be\label{attform1}
\cala (\omega_1, \omega_2)
=\Omega(K, \omega_1, \omega_2)
=\bigcup_{B \in \cald} \Omega(B, \omega_1, \omega_2)
\ee
\be\label{attform2}
 =\{\psi(0, \omega_1, \omega_2): \psi \mbox{ is a  }  \cald {\rm -}
 \mbox{complete orbit of } \Phi\} .
 \ee
  \end{thm}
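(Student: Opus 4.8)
The plan is to obtain Theorem \ref{att} simply by assembling the lemmas proved above; no new estimates are required. First I would dispose of uniqueness, which is exactly Remark \ref{defattrem1} and follows from conditions (i)--(iii) of Definition \ref{defatt} alone, independently of any property of $\cald$.

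For the sufficiency direction ("if"), suppose $\Phi$ is $\cald$-pullback asymptotically compact and possesses a closed measurable $\cald$-pullback absorbing set $K\in\cald$. Since a neighborhood closed collection is in particular inclusion-closed (as observed right after Definition \ref{defepsneigh1}), all hypotheses of Lemma \ref{attlem2} are satisfied; that lemma produces the unique $\cald$-pullback attractor $\cala\in\cald$ together with the identity $\cala(\omega_1,\omega_2)=\Omega(K,\omega_1,\omega_2)$, which is the first equality in \eqref{attform1}. Lemma \ref{attlem3}, whose hypotheses are the same, then yields $\cala(\omega_1,\omega_2)=\bigcup_{B\in\cald}\Omega(B,\omega_1,\omega_2)$, the second equality in \eqref{attform1}. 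Finally, since $\cala$ is now a genuine $\cald$-pullback attractor, Lemma \ref{attstr1} applies and gives the complete-orbit representation \eqref{attform2}.

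For the necessity direction ("only if"), suppose $\Phi$ has a $\cald$-pullback attractor $\cala\in\cald$. Lemma \ref{aclem1} shows directly that $\Phi$ is $\cald$-pullback asymptotically compact in $X$, and Lemma \ref{attlem1}---where the full strength of neighborhood closedness of $\cald$ is used, in order to enlarge $\cala$ to a closed $\varepsilon_1$-neighborhood that still belongs to $\cald$---shows that $\Phi$ has a closed $\cald$-pullback absorbing set $K\in\cald$ that is measurable with respect to the $P$-completion of $\calftwo$. This establishes both required conditions. Once both hold, the attractor furnished by Lemma \ref{attlem2} must coincide with the given $\cala$ by the uniqueness just recorded, so the formulas \eqref{attform1}--\eqref{attform2} are valid for it as well; this is why it suffices to verify them only in the sufficiency direction.

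The only point requiring attention---and the closest thing to an obstacle---is bookkeeping of which closedness hypothesis on $\cald$ is invoked where: inclusion-closedness alone suffices for the sufficiency direction through Lemma \ref{attlem2} and Lemma \ref{attlem3}, whereas the necessity direction genuinely needs neighborhood closedness to run Lemma \ref{attlem1}. Since neighborhood closedness implies inclusion-closedness, formulating the theorem under the single hypothesis that $\cald$ is neighborhood closed is legitimate, and the chain of implications above closes up without circularity.
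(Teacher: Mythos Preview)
Your proposal is correct and matches the paper's own argument essentially verbatim: the paper states that Theorem \ref{att} follows immediately from Lemmas \ref{attstr1}--\ref{attlem3}, and your assembly of those lemmas (using \ref{attlem2} and \ref{attlem3} for sufficiency, \ref{aclem1} and \ref{attlem1} for necessity, \ref{attstr1} for the complete-orbit description, and Remark \ref{defattrem1} for uniqueness) is precisely that. Your additional bookkeeping remark about which closedness hypothesis is needed where is accurate and even slightly more explicit than the paper.
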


We now prove the periodicity of $\cald$-pullback of attractors
for a periodic cocycle under certain conditions.

\begin{thm}
\label{periodatt} 
Suppose  $\Phi$   is  a continuous  periodic   cocycle
with period $T>0$  on $X$
over $(\Omega_1,  \{\thonet\}_{t \in \R})$
and
$(\Omega_2, \calftwo, P,  \{\thtwot\}_{t \in \R})$.
 Let   $\cald$   be  a  neighborhood closed 
   and $T$-translation invariant collection of some  families of   nonempty subsets of
$X$.
 If
$\Phi$ is $\cald$-pullback asymptotically
compact in $X$ and $\Phi$ has a  closed
   measurable (w.r.t. the $P$-completion of $\calftwo$)
     $\cald$-pullback absorbing set
  $K$ in $\cald$, then $\Phi$
  has a unique periodic
   $\cald$-pullback
attractor $\cala \in \cald$    with period $T$,  i.e., 
$\cala (\theta_{1, T} \omega_1,  \omega_2)
=\cala(\omega_1, \omega_2)$.
\end{thm}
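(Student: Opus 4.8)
The plan is to obtain existence and uniqueness of the $\cald$-pullback attractor from Theorem~\ref{att}, and then to deduce periodicity by showing that the $T$-translate of the attractor is again a $\cald$-pullback attractor; uniqueness within $\cald$ then forces the two to coincide. Concretely, since $\cald$ is neighborhood closed, $\Phi$ is $\cald$-pullback asymptotically compact, and $\Phi$ has a closed measurable $\cald$-pullback absorbing set $K$ in $\cald$, Theorem~\ref{att} yields a unique $\cald$-pullback attractor $\cala=\{\cala(\omega_1,\omega_2):\omega_1\in\Omega_1,\omega_2\in\Omega_2\}\in\cald$. It then suffices to prove $\cala(\theta_{1,T}\omega_1,\omega_2)=\cala(\omega_1,\omega_2)$ for all $\omega_1,\omega_2$. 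I would introduce the family $\cala_T$ with $\cala_T(\omega_1,\omega_2)=\cala(\theta_{1,T}\omega_1,\omega_2)$, which is precisely the $T$-translation of $\cala$ in the sense of \eqref{DTfamily1}--\eqref{DTfamily2}; since $\cald$ is $T$-translation invariant and $\cala\in\cald$, we get $\cala_T\in\cald_T=\cald$. The task reduces to verifying conditions (i)--(iii) of Definition~\ref{defatt} for $\cala_T$, after which Remark~\ref{defattrem1} gives $\cala_T=\cala$.

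Conditions (i) and (ii) are short. For (i): $\cala_T(\omega_1,\omega_2)=\cala(\theta_{1,T}\omega_1,\omega_2)$ is compact because $\cala(\theta_{1,T}\omega_1,\omega_2)$ is, and for each fixed $\omega_1$ the map $\omega_2\mapsto d(x,\cala_T(\omega_1,\omega_2))=d(x,\cala(\theta_{1,T}\omega_1,\omega_2))$ is measurable with respect to the $P$-completion of $\calftwo$, since $\theta_{1,T}\omega_1$ is a fixed element of $\Omega_1$ and $\cala$ has this property. For (ii): using the periodicity of $\Phi$ (Definition~\ref{periodic-ds1}), the invariance of $\cala$, and the group identity $\theta_1(t+T,\cdot)=\theta_1(T+t,\cdot)$ (so $\theta_{1,t}$ and $\theta_{1,T}$ commute), one computes, for $t\ge 0$,
\[
\Phi(t,\omega_1,\omega_2,\cala_T(\omega_1,\omega_2))
=\Phi(t,\theta_{1,T}\omega_1,\omega_2,\cala(\theta_{1,T}\omega_1,\omega_2))
=\cala(\theta_{1,t}\theta_{1,T}\omega_1,\theta_{2,t}\omega_2)
=\cala_T(\theta_{1,t}\omega_1,\theta_{2,t}\omega_2).
\]

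The real work is condition (iii), pullback attraction of $\cala_T$ on every $B\in\cald$. Here I would use the $(-T)$-translate $B_{-T}$, with $B_{-T}(\omega_1,\omega_2)=B(\theta_{1,-T}\omega_1,\omega_2)$, which lies in $\cald$ because $\cald$, being $T$-translation invariant, is in particular $(-T)$-translation closed by Lemma~\ref{Tlation2}. Writing out the attraction property of $\cala$ for $B_{-T}$ at the base point $(\theta_{1,T}\omega_1,\omega_2)$, and simplifying with the group laws $\theta_1(-t,\cdot)\circ\theta_1(T,\cdot)=\theta_1(T-t,\cdot)$ and $\theta_1(-T,\cdot)\circ\theta_1(T-t,\cdot)=\theta_1(-t,\cdot)$, the evaluations of $B_{-T}$ collapse back to $B(\theta_{1,-t}\omega_1,\theta_{2,-t}\omega_2)$ while the attracting set becomes $\cala(\theta_{1,T}\omega_1,\omega_2)=\cala_T(\omega_1,\omega_2)$, which leaves
\[
\lim_{t\to\infty}d\bigl(\Phi(t,\theta_{1,T-t}\omega_1,\theta_{2,-t}\omega_2,B(\theta_{1,-t}\omega_1,\theta_{2,-t}\omega_2)),\ \cala_T(\omega_1,\omega_2)\bigr)=0.
\]
A last application of the periodicity of $\Phi$ (write $\theta_{1,T-t}\omega_1=\theta_{1,T}(\theta_{1,-t}\omega_1)$ and use $\Phi(t,\theta_{1,T}\omega_1',\omega_2',\cdot)=\Phi(t,\omega_1',\omega_2',\cdot)$ with $\omega_1'=\theta_{1,-t}\omega_1$, $\omega_2'=\theta_{2,-t}\omega_2$) replaces $\theta_{1,T-t}$ by $\theta_{1,-t}$, giving exactly the attraction of $\cala_T$ on $B$. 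Hence $\cala_T$ is a $\cald$-pullback attractor, so $\cala_T=\cala$, i.e. $\cala(\theta_{1,T}\omega_1,\omega_2)=\cala(\omega_1,\omega_2)$; uniqueness is inherited from Theorem~\ref{att}.

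The only delicate point — and the place where the hypotheses are used in full — is this last attraction argument: one needs $\cald$ closed under both $T$- and $(-T)$-translations (hence the \emph{invariance} assumption rather than mere closedness), so that both $B_{-T}\in\cald$ and $\cala_T\in\cald$, and the periodicity of $\Phi$ has to be interleaved carefully with the one-parameter group laws for $\theta_1$ in order to line up the base points. Everything else is bookkeeping with Definitions~\ref{defatt} and~\ref{periodic-ds1} and Remark~\ref{defattrem1}.
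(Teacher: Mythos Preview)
Your proof is correct and takes a genuinely different route from the paper's. The paper proves the two inclusions $\cala(\theta_{1,T}\omega_1,\omega_2)\subseteq\cala(\omega_1,\omega_2)$ and its converse separately, using the characterization \eqref{attform2} of the attractor by $\cald$-complete orbits: given $y\in\cala(\theta_{1,T}\omega_1,\omega_2)$, it picks a $\cald$-complete orbit $\psi$ through $y$, builds a new map $u(t,\omegat_1,\omegat_2)=\psi(t,\theta_{1,T}\omegat_1,\omegat_2)$, and checks via the periodicity of $\Phi$ and the $T$-translation invariance of $\cald$ that $u$ is again a $\cald$-complete orbit, hence $y=u(0,\omega_1,\omega_2)\in\cala(\omega_1,\omega_2)$; the reverse inclusion uses the $(-T)$-translate in the same way. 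Your argument bypasses complete orbits entirely: you show that the translated family $\cala_T$ itself satisfies all three conditions of Definition~\ref{defatt} and lies in $\cald$, then invoke uniqueness (Remark~\ref{defattrem1}) to conclude $\cala_T=\cala$ in one stroke. Your approach is shorter and arguably more transparent, since it reduces everything to the defining properties of the attractor rather than to its internal structure; the paper's approach, on the other hand, illustrates concretely how the complete-orbit description \eqref{attform2} can be put to work, which fits its broader emphasis on that characterization. Both arguments use the full $T$-translation invariance of $\cald$ (not just closedness) at exactly the same point: you need $\cala_T\in\cald$ and $B_{-T}\in\cald$, while the paper needs $D_T\in\cald$ and $D_{-T}\in\cald$ for the shifted orbits.
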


\begin{proof}
 It follows from Theorem \ref{att} that $\Phi$ has a unique $\cald$-pullback
 attractor $\cala \in \cald $ which is given  by
 \eqref{attform1}   and \eqref{attform2}.
 We may use either
  \eqref{attform1}   or \eqref{attform2} to
  prove Theorem \ref{periodatt}.
  We here   use the latter
  to first prove
  $\cala(\theta_{1, T} \omega_1 , \omega_2)
  \subseteq \cala (\omega_1,  \omega_2 ) $
  for all $\omega_1 \in \Omega_1$   and $\omega_2 \in \Omega_2$.

  Fix $\omega_1 \in \Omega_1$ and $\omega_2 \in \Omega_2$ and 
  take $y \in   \cala(\theta_{1, T} \omega_1 , \omega_2)$. It follows from
  \eqref{attform2}    that there exists a $\cald$-complete orbit
  $\psi$ of $\Phi$  such that
  $y = \psi (0, \theta_{1, T} \omega_1, \omega_2)$.
  Since $\psi $  is a complete orbit of $\Phi$, for any $t \ge 0$, 
  $\tau \in \R$, $\omegat_1 \in \Omega_1$
  and $\omegat_2 \in \Omega_2$,  we have
  \be
  \label{ppatt1}
  \Phi (t, \theta_{1, \tau +T} \omegat_1, \theta_{2, \tau } \omegat_2,
  \psi(\tau,   \theta_{1,    T} \omegat_1,  \omegat_2 ))
  = \psi (t+\tau, \theta_{1,    T} \omegat_1,   \omegat_2 ).
  \ee
  Since $\psi $  is a $\cald$-complete orbit, there exists
  $D \in \cald$   such that
  \be
  \label{ppatt2}
  \psi (t, \theta_{1,    T} \omegat_1,  \omegat_2)
  \in D(\theta_{1,   t+ T} \omegat_1, \thtwot \omegat_2 )
  = D_T(\theta_{1,   t} \omegat_1, \theta_{2,   t} \omegat_2 ),
  \ee
  where $D_T$ is the $T$-translation of $D$ given by \eqref{DTfamily1}.
  We now define a map $u: \R \times \Omega_1 \times \Omega_2 \to X$ such that
  \be
  \label{ppatt3}
  u(t, \omegat_1, \omegat_2)
  = \psi (t, \theta_{1,   T} \omegat_1,  \omegat_2 ),
  \quad \forall \ t \in  \R, \  \forall \ \omegat_1 \in \Omega_1, \
  \forall \ \omegat_2 \in \Omega_2.
  \ee
  We next prove $u$ is a complete orbit of $\Phi$.
  It follows from \eqref{ppatt3} that
   for all $t \ge 0$,  $\tau \in \R$, $\omegat_1 \in \Omega_1$ and
  $\omegat_2 \in \Omega_2$,  
  $$
  \Phi (t, \theta_{1, \tau} \omegat_1, \theta_{2, \tau} \omegat_2, u(\tau, \omegat_1, \omegat_2))
  = \Phi (t, \theta_{1, \tau} \omegat_1, \theta_{2, \tau} \omegat_2,  
  \psi (\tau, \theta_{1,   T} \omegat_1,  \omegat_2 ))
  $$
  $$
 = \Phi (t, \theta_{1, \tau +T} \omegat_1,  \theta_{2, \tau} \omegat_2,
  \psi (\tau, \theta_{1,   T} \omegat_1,  \omegat_2 ))
  $$
  \be \label{ppatt4}
  =
  \psi (t+ \tau, \theta_{1,   T} \omegat_1,  \omegat_2 )
  =u(t + \tau, \omegat_1, \omegat_2),
 \ee
  where we have used 
  the $T$-periodicity of $\Phi$, \eqref{ppatt1}   and \eqref{ppatt3}.
  By \eqref{ppatt4} we see that $u$ is a complete orbit of $\Phi$. 
  We now prove that  $u$ is  actually a $\cald$-complete orbit.
  It   follows from \eqref{ppatt2}   and \eqref{ppatt3} that
  for all $t \ge 0$,   $\omegat_1 \in \Omega_1$ and
  $\omegat_2 \in \Omega_2$,  
  \be\label{ppatt9}
  u(t, \omegat_1, \omegat_2)
  = \psi (t, \theta_{1,   T} \omegat_1,  \omegat_2 )
  \in
  D_T(\theta_{1,   t} \omegat_1, \theta_{2,   t} \omegat_2 ).
 \ee
 Since $D \in \cald$ and $\cald$ is $T$-translation invariant,  we know 
 $D_T \in \cald$, which along with
 \eqref{ppatt9} shows   that $u$ is a $\cald$-complete orbit.
 Then it follows   from \eqref{attform2}    that
 $u(0, \omega_1,  \omega_2)
 \in \cala(\omega_1,  \omega_2)$.
 By \eqref{ppatt3} we have
 $u(0, \omega_1,  \omega_2)
 = \psi(0,  \theta_{1,T} \omega_1, \omega_2) =y$.
 Therefore we  get
 $y \in \cala(\omega_1,  \omega_2)$.
 Since $y$ is an arbitrary element of 
 $\cala(\theta_{1, T} \omega_1 , \omega_2)$  we find that
 \be
 \label{ppatt10}
 \cala(\theta_{1, T} \omega_1 , \omega_2)
  \subseteq \cala (\omega_1,   \omega_2 ), \quad 
  \mbox{ for all }  \omega_1 \in \Omega_1 \mbox{ and  }  \omega_2 \in \Omega_2.
  \ee

  We now prove  the converse of \eqref{ppatt10}.
     Let $y \in   \cala(  \omega_1 ,\omega_2)$. 
     By
  \eqref{attform2}    we find  that there exists a $\cald$-complete orbit
  $\psi$ of $\Phi$  such that
  $y = \psi (0,   \omega_1, \omega_2)$.
  Since $\psi $  is a complete orbit, for any $t \ge 0$,
  $\tau \in \R$, $\omegat_1 \in \Omega_1$
  and $\omegat_2 \in \Omega_2$,  we have
  \be
  \label{ppatt1a}
  \Phi (t, \theta_{1, \tau -T} \omegat_1, \theta_{2, \tau } \omegat_2,
  \psi(\tau,   \theta_{1,    -T} \omegat_1,  \omegat_2 ))
  = \psi (t+\tau, \theta_{1,    -T} \omegat_1,  \omegat_2 ).
  \ee
  Since $\psi $  is a $\cald$-complete orbit, there exists
  $D \in \cald$   such that
  \be
  \label{ppatt2a}
  \psi (t, \theta_{1,    -T} \omegat_1,  \omegat_2)
  \in D(\theta_{1,   t-T} \omegat_1, \thtwot \omegat_2 )
  = D_{-T}(\theta_{1,   t} \omegat_1, \theta_{2,   t} \omegat_2 ).
  \ee 
  Define a map $v: \R \times \Omega_1 \times \Omega_2 \to X$ such that
  \be
  \label{ppatt3a}
  v(t, \omegat_1, \omegat_2)
  = \psi (t, \theta_{1,   -T} \omegat_1,  \omegat_2 ),
  \quad \forall \ t \in  \R, \  \forall \ \omegat_1 \in \Omega_1, \
  \forall \ \omegat_2 \in \Omega_2.
  \ee
  By using the $T$-periodicity of $\Phi$, \eqref{ppatt1a}   and \eqref{ppatt3a},
  following the proof for $u$, 
  we can show that   $v$ is a complete orbit of $\Phi$.
    In addition,    by  \eqref{ppatt2a}   and \eqref{ppatt3a},
    we find  that
  for all $t \ge 0$,   $\omegat_1 \in \Omega_1$ and
  $\omegat_2 \in \Omega_2$,
  \be\label{ppatt9a}
  v(t, \omegat_1, \omegat_2)
  = \psi (t, \theta_{1,   -T} \omegat_1,  \omegat_2 )
  \in
  D_{-T}(\theta_{1,   t} \omegat_1, \theta_{2,   t} \omegat_2 ).
 \ee
 Since $D \in \cald$ and $\cald$ is $T$-translation invariant, 
 it follows from Lemma \ref{Tlation2}   that   
 $D_{-T} \in \cald$, which along with
 \eqref{ppatt9a} shows   that $v$ is a $\cald$-complete orbit.
 Then it follows   from \eqref{attform2}    that
 $v(0, \theta_{1, T}\omega_1,  \omega_2)
 \in \cala(\theta_{1, T}\omega_1,  \omega_2)$.
 But by \eqref{ppatt3a} we have
 $v(0, \theta_{1, T}\omega_1,  \omega_2)
 = \psi(0,  \omega_1, \omega_2) =y$.
 Thus we obtain 
 $y \in \cala(\theta_{1, T}\omega_1,  \omega_2)$,
 from which we get  that
 $\cala(\omega_1,  \omega_2 )
 \subseteq \cala(\theta_{1, T} \omega_1 , \omega_2)$.
 This along   with \eqref{ppatt10} concludes   the proof.
\end{proof}

 We now provide a sufficient
 and  necessary  criterion for the 
 periodicity of random attractors
 based on the existence of 
 periodic pullback absorbing sets.
 
\begin{thm}
\label{periodatt2} 
 Let   $\cald$   be  a  neighborhood closed 
     collection of some  families of   nonempty subsets of
$X$.
Suppose  $\Phi$   is  a continuous  periodic   cocycle
with period $T>0$  on $X$
over $(\Omega_1,  \{\thonet\}_{t \in \R})$
and
$(\Omega_2, \calftwo, P,  \{\thtwot\}_{t \in \R})$.
 Suppose further that  $\Phi$  has a $\cald$-pullback attractor
 $\cala \in \cald$.  Then $\cala$ is periodic with period  $T$
 if and only if 
$\Phi$  has a  closed
   measurable (w.r.t. the $P$-completion of $\calftwo$)
     $\cald$-pullback absorbing set
  $K \in \cald$ with $K$ being periodic with period $T$. 
\end{thm}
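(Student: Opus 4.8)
The plan is to prove the two implications separately; each reduces to a construction or a characterization already available in this section.

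\emph{Necessity.} Suppose $\cala$ is periodic with period $T$. I would reuse the construction from the proof of Lemma~\ref{attlem1}: since $\cala\in\cald$ and $\cald$ is neighborhood closed, choose $\varepsilon>0$ so that the family in \eqref{defepsneigh2} with $D=\cala$ belongs to $\cald$, fix $\varepsilon_1\in(0,\varepsilon)$, and set
$$K(\omega_1,\omega_2)=\{x\in X:\ d(x,\cala(\omega_1,\omega_2))\le\varepsilon_1\}.$$
Exactly as in Lemma~\ref{attlem1}, $K=\{K(\omega_1,\omega_2):\omega_1\in\Omega_1,\omega_2\in\Omega_2\}$ belongs to $\cald$ (each $K(\omega_1,\omega_2)$ is a nonempty subset of $\caln_\varepsilon(\cala(\omega_1,\omega_2))$ because $\varepsilon_1<\varepsilon$), is closed, is a $\cald$-pullback absorbing set by the attraction property of $\cala$, and is measurable with respect to the $P$-completion of $\calftwo$ (the map $(\omega_2,x)\mapsto d(x,\cala(\omega_1,\omega_2))$ is measurable in $\omega_2$ and continuous in $x$, so Theorem~8.1.4 of \cite{aubin1} applies to the set-valued map $\omega_2\mapsto K(\omega_1,\omega_2)$). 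The one new point is periodicity: $K(\omega_1,\omega_2)$ depends on $(\omega_1,\omega_2)$ only through the set $\cala(\omega_1,\omega_2)$ via the distance function, so $\cala(\theta_{1,T}\omega_1,\omega_2)=\cala(\omega_1,\omega_2)$ immediately gives $K(\theta_{1,T}\omega_1,\omega_2)=K(\omega_1,\omega_2)$, i.e. $K$ is periodic with period $T$.

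\emph{Sufficiency.} Suppose $\Phi$ has a $T$-periodic closed measurable $\cald$-pullback absorbing set $K\in\cald$. Since $\Phi$ already has a $\cald$-pullback attractor, Lemma~\ref{aclem1} shows $\Phi$ is $\cald$-pullback asymptotically compact; together with the existence of $K$, Theorem~\ref{att} then yields that the (unique, by Remark~\ref{defattrem1}) $\cald$-pullback attractor $\cala$ is given by
$$\cala(\omega_1,\omega_2)=\Omega(K,\omega_1,\omega_2)=\bigcap_{\tau\ge0}\ \overline{\bigcup_{t\ge\tau}\Phi(t,\theta_{1,-t}\omega_1,\theta_{2,-t}\omega_2,K(\theta_{1,-t}\omega_1,\theta_{2,-t}\omega_2))}.$$
I would then evaluate this formula at $(\theta_{1,T}\omega_1,\omega_2)$. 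Using the group identity $\theta_{1,-t}\circ\theta_{1,T}=\theta_{1,T}\circ\theta_{1,-t}$, the $T$-periodicity of $\Phi$ (Definition~\ref{periodic-ds1}, applied with $\theta_{1,-t}\omega_1$ and $\theta_{2,-t}\omega_2$ in place of $\omega_1,\omega_2$) to rewrite $\Phi(t,\theta_{1,T}(\theta_{1,-t}\omega_1),\theta_{2,-t}\omega_2,\cdot)=\Phi(t,\theta_{1,-t}\omega_1,\theta_{2,-t}\omega_2,\cdot)$, and the periodicity of $K$ to rewrite $K(\theta_{1,T}(\theta_{1,-t}\omega_1),\theta_{2,-t}\omega_2)=K(\theta_{1,-t}\omega_1,\theta_{2,-t}\omega_2)$, every set entering the union defining $\Omega(K,\theta_{1,T}\omega_1,\omega_2)$ coincides with the corresponding set for $\Omega(K,\omega_1,\omega_2)$. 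Hence $\cala(\theta_{1,T}\omega_1,\omega_2)=\cala(\omega_1,\omega_2)$, i.e. $\cala$ is periodic with period $T$.

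Every step leans on results already proved in this section, so I do not anticipate a genuine obstacle. The only places demanding care are matching the variable substitutions when invoking Definition~\ref{periodic-ds1} inside the $\Omega$-limit formula, and, in the necessity direction, re-checking the measurability of the neighborhood $K$, which however simply repeats the argument of Lemma~\ref{attlem1}.
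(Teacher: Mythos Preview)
Your proposal is correct and follows essentially the same approach as the paper's own proof: the necessity direction reuses the closed $\varepsilon_1$-neighborhood construction from Lemma~\ref{attlem1} and reads off periodicity from that of $\cala$, while the sufficiency direction identifies $\cala$ with $\Omega(K,\cdot,\cdot)$ via Theorem~\ref{att} and then uses the $T$-periodicity of both $\Phi$ and $K$ inside the $\Omega$-limit formula. Your explicit invocation of Lemma~\ref{aclem1} to justify the asymptotic compactness needed for Theorem~\ref{att} is a small clarification the paper leaves implicit (it simply cites Theorem~\ref{att}, relying on its ``if and only if'' structure), but otherwise the arguments coincide.
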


\begin{proof}
 Suppose $\cala$ is $T$-periodic, i.e.,   for every
 $\omega_1 \in \Omega_1$ and $\omega_2 \in \Omega_2$,
 \be\label{peratt2_p1}
 \cala(\theta_{1, T} \omega_1, \omega_2)
 = \cala(\omega_1, \omega_2).
 \ee
 Since $\cala \in \cald$ and $\cald$ is neighborhood closed,
  by    the proof of Lemma \ref{attlem1} we know   that
 $\Phi$ has a 
   closed
   measurable
     $\cald$-pullback absorbing set
  $K \in \cald$ which is given by
  \eqref{attlem1_p2}.
  By \eqref{peratt2_p1} and
     \eqref{attlem1_p2} we find   that,
  for every
 $\omega_1 \in \Omega_1$ and $\omega_2 \in \Omega_2$,
 $$
 K(\theta_{1, T} \omega_1, \omega_2)
 =  \{x \in X:  d(x, \cala(\theta_{1, T} \omega_1, \omega_2) )
 \le \varepsilon_1 \}
 =  \{x \in X:  d(x, \cala( \omega_1, \omega_2) )
 \le \varepsilon_1 \}
 =K( \omega_1, \omega_2).
 $$
 and hence $K$ is periodic with period $T$.

 On the other hand,  
 if $\Phi$  has a  closed
   measurable  
     $\cald$-pullback absorbing set
  $K \in \cald$ such that   $K$ is $T$-periodic,
  then by  \eqref{omegalimit},  Theorem \ref{att}
  and the $T$-periodicity of $\Phi$  we have,
  for every
 $\omega_1 \in \Omega_1$ and $\omega_2 \in \Omega_2$,
 $$ \cala (\theta_{1, T} \omega_1, \omega_2)
 =\Omega(K, \theta_{1, T}\omega_1, \omega_2)
= \bigcap_{\tau \ge 0}
\  \overline{ \bigcup_{t\ge \tau} 
\Phi(t, \theta_{1,-t}  \theta_{1, T}\omega_1, \theta_{2, -t} 
\omega_2, K(\theta_{1,-t} \theta_{1, T}\omega_1, \theta_{2,-t}\omega_2  ))}
$$
$$
= \bigcap_{\tau \ge 0}
\  \overline{ \bigcup_{t\ge \tau} 
\Phi(t, \theta_{1,-t} \omega_1, \theta_{2, -t} 
\omega_2, K(\theta_{1,-t} \omega_1, \theta_{2,-t}\omega_2  ))}
 =\Omega (K, \omega_1, \omega_2)
 =\cala(\omega_1, \omega_2).
$$
This shows  that $\cala$ is $T$-periodic and thus 
completes   the proof.
\end{proof}

\section{Pullback Attractors for Differential Equations}
\setcounter{equation}{0}
 
In this section,    we discuss how to
choose the parametric spaces $\Omega_1$
and $\Omega_2$ to 
  study   pullback   attractors  of
 differential   equations by  using
   the abstract  theory   presented in the preceding section.
Actually,   the concept of   continuous cocycle
given by Definition \ref{ds1} is motivated
by the solution operators of
differential equations
with  both external  non-autonomous deterministic    and
 stochastic  terms.
 Two special cases are covered by Definition \ref{ds1}:
 one is the solution operators generated by equations with only
   deterministic forcing  terms; and the other is the process generated by
   equations with only stochastic forcing terms.
   In this sense,
Definition \ref{ds1}   is an extension of the  classical concepts
of autonomous, deterministic non-autonomous
and random  dynamical systems.
  Indeed,
These classical concepts can be cast into the framework
of  Definition
\ref{ds1} by appropriately choosing the spaces $\Omega_1$ and $\Omega_2$.
To deal with  autonomous differential equations,
  we  may  simply take both  $\Omega_1$ and 
  $\Omega_2$ as singletons (say, $\Omega_1 =\{\omega^*_1\}$
  and $\Omega_2 =\{\omega^*_2\}$),
  and  define $\thonet \omega_1^* = \omega_1^*$
  and  $\thtwot \omega_2^* = \omega_2^*$
  for all $t \in \R$.  Then
a continuous cocycle $\Phi$  in $X$  in terms of
 Definition \ref{ds1}  is  exactly  a
 classical continuous autonomous dynamical system as defined in
 the literature (see, e.g.,
 \cite{bab1, hal1, sel1, tem1}).   In this case, if we define
 \be\label{autorD}
 \cald
 =\{ D = \{  D(\omega_1^*, \omega_2^*): 
     D(\omega_1^*, \omega_2^*)  
     \mbox{ is a  bounded nonempty  subset of }  X  \} \},
 \ee
  then
 the concept of a  $\cald$-pullback attractor given by Definition \ref{defatt}
 coincides with that of a global attractor of $\Phi$.
 Note  that   when  $\Omega_1 =\{\omega^*_1\}$
  and $\Omega_2 =\{\omega^*_2\}$ 
  are singletons,  every member $D$ of $\cald$
  as defined  by \eqref{autorD} is  also a singleton, i.e., 
  $D$
  contains  a single bounded  nonempty  set.
  Therefore, in this case,  
       $D$  does not depend on time   since 
  $\thonet$ and $\thtwot$ are both  
  constant for all $t \in \R$.
  This implies  that   $\cald$ given by \eqref{autorD}
 is inclusion-closed. Actually, it is  also neighborhood closed.
  In the case of autonomous systems,  Theorem
 \ref{att} is well known.
 Next,  we   discuss the non-autonomous systems  and random systems
in detail.

\subsection{Pullback Attractors  for  Equations   with   both
Non-autonomous   Deterministic   and
 Random Forcing  Terms}
 \label{sec31}
 
 In this subsection,  we discuss pullback attractors for differential 
 equations with non-autonomous deterministic  
  as well as stochastic  terms.
 In this  case, 
 there   are  at  least  two options   for choosing the 
 space $\Omega_1$   to formulate  the solution 
 operators of an equation  into  the setting of cocycles.  
 To demonstrate the idea, the pullback attractors 
 of Reaction-Diffusion  equations
 defined on $\R^n$ will be investigated in the next  section. 
 For non-autonomous equations, the space $\Omega_1$ 
  is used to deal with the dependence of solutions on initial times. 
   So  the first  choice of $\Omega_1$ is $\R$,  
  where $\R$   represents  the set of all initial times.
  For the   second   choice  of  $\Omega_1$,
   we need to use the translations of
  non-autonomous parameters  involved in   differential equations
  which may be  non-autonomous coefficients or external terms  or both.
  Let $f: \R \to X$  be a function that is   contained in a  
   differential   equation. Given $h \in \R$,  define  $f^h: \R \to  X$ by
  $f^h (\cdot) = f(\cdot + h)$.   The function $f^h$ 
  is actually a translation
  of $f$   by  $h$.  Let $\Omega_f$ be the collection of all translations of $f$, that  is, 
  $ \Omega_f =\{ f^h:    h \in \R \}$. 
  Then the  second choice of $\Omega_1$ is  $\Omega_f$.
  In the sequel, we first discuss the case $\Omega_1 =\R$
  and then $\Omega_1 =\Omega_f$. 
  As  we will see   later,  these two approaches are actually 
  equivalent  provided $f$ is  not a  periodic  function.

  Suppose  now    $\Omega_1 = \R$. 
  Define  a family $\{\thonet\}_{t \in \R}$    of shift operators   by
  \be
  \label{shift1}
  \thonet (h) =  h + t,
  \quad \mbox{ for all } \ t,\   h  \in \R.
  \ee
      Let  $\Phi$:   $ \R^+ \times \R \times \Omega_2 \times X$ 
 $  \to  X$  be   a continuous  cocycle on $X$
over $(\R,  \{\thonet\}_{t \in \R})$
and $(\Omega_2, \calftwo, P, \{\thtwot\}_{t \in \R})$,
where $  \{\thonet\}_{t \in \R} $ is defined by
\eqref{shift1}.  
  By  Definition \ref{comporbit},   a map 
   $\psi$:    $\R \times \R \times \Omega_2$ 
   $\to X$ is    
      a complete orbit of $\Phi$ if  
       for every  $t \ge 0$,  $\tau \in \R$,
 $ s  \in \R$  and $\omega \in \Omega_2$,     the following   holds:
\be
\label{nonau21}
 \Phi (t,   \tau + s, \theta_{2, \tau} \omega, 
  \psi (\tau, s, \omega ) )
  = \psi (t + \tau, s,  \omega).
\ee
Let $\xi: \R \times \Omega_2 $  $ \to X$ satisfy, for all $t \ge 0$,  $\tau \in \R$
and $\omega \in \Omega_2$,
\be
\label{nonau22}
\Phi(t, \tau, \omega,  \xi (\tau, \omega) ) = \xi (t+ \tau, \thtwot \omega).
\ee
Any function $\xi: \R \times  \Omega_2 \to X$ with
property  \eqref{nonau22} is said to be 
 a  complete quasi-solution of
$\Phi$ in the sense of \eqref{nonau22}. 
If,  in addition,  
 there exists $D=\{ D(t, \omega ):  t \in \R,  \omega \in \Omega_2
  \} \in \cald$
such that $\xi (t, \omega  ) \in D( t, \omega)$ for all $ t \in \R$
and $\omega \in \Omega_2$,
then, we say       $\xi$  is   a $\cald$-complete  quasi-solution
of $\Phi$    in the sense of
\eqref{nonau22}.
If $\psi: \R \times \R \times \Omega_2$  $ \to X$ is a complete  orbit
of $\Phi$ according to  Definition \ref{comporbit}, 
then 
for every $s \in \R$,  it is easy  to check   that
 the  map
$\xi^s$:  $ (\tau, \omega)  \in \R \times \Omega_2$
$\to    \psi (\tau -s,  s,  \theta_{2, s-\tau} \omega)$ is a complete 
quasi-solution 
of $\Phi$ in the sense of \eqref{nonau22}.
Further,  $\xi^s (s, \omega) = \psi (0, s, \omega)$.
  Similarly,  if   $\xi: \R \times \Omega_2  \to X$  is a 
  complete  quasi-solution of  $\Phi$ under \eqref{nonau22},
  then  the  map $\psi: \R \times \R \times \Omega_2 \to X$  defined by
\be\label{nonau23}
\psi (t , s, \omega) = \xi (t+s, \thtwot\omega),
\quad \mbox{ for all } \ t, \ s   \in \R \  \mbox{ and }
\omega \in \Omega_2,
\ee
is a complete orbit of $\Phi$  in the sense of \eqref{nonau21}. 
In addition, $\psi (0, s, \omega) = \xi(s, \omega)$.
Using \eqref{nonau21}-\eqref{nonau23} and applying
Theorem \ref{att} to the case $\Omega_1 =\R$, we get
the following result.

\begin{prop}
\label{app5}
Let   $\cald$ be a  
neighborhood closed  collection of some 
 families of   nonempty subsets of
$X$ and $\Phi$  be a continuous   cocycle on $X$
over $(\R,  \{\thonet\}_{t \in \R})$
and
$(\Omega_2, \calftwo, P,  \{\thtwot\}_{t \in \R})$,
where $\{\thonet\}_{t \in \R}$ is defined by \eqref{shift1}.
Then
$\Phi$ has a  $\cald$-pullback
attractor $\cala$  in $\cald$
if and only if
$\Phi$ is $\cald$-pullback asymptotically
compact in $X$ and $\Phi$ has a  closed
   measurable (w.r.t. the $P$-completion of $\calftwo$)
     $\cald$-pullback absorbing set
  $K$ in $\cald$.
  The $\cald$-pullback
attractor $\cala$   is unique   and is given  by,
for each $\tau   \in \R$   and
$\omega  \in \Omega_2$,
\be\label{app5_a1}
\cala (\tau, \omega )
=\Omega(K, \tau, \omega )
=\bigcup_{B \in \cald} \Omega(B, \tau, \omega )
\ee
\be\label{app5_a2}
 =\{\psi(0, \tau, \omega ): \psi \mbox{ is a   }  \cald {\rm -}
 \mbox{complete orbit of } \Phi
  \  \mbox{ under Definition }
  \eqref{comporbit}
 \} 
\ee
\be\label{app5_a3}
   = \{ \xi (\tau, \omega): \xi \mbox{ is a  }\  \cald{\rm -}
\mbox{complete  quasi-solution  of }
\Phi \mbox{  in the sense of } \ \eqref{nonau22} \}.
\ee 
  \end{prop}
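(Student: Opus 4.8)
The plan is to derive Proposition \ref{app5} from Theorem \ref{att} by specializing to $\Omega_1=\R$ with $\{\thonet\}_{t\in\R}$ the shift group \eqref{shift1}, and then to recast the orbit-description of $\cala$ as the quasi-solution-description \eqref{app5_a3}. Since $\cald$ is neighborhood closed, Theorem \ref{att} applies directly and immediately gives the asserted equivalence between the existence of a $\cald$-pullback attractor $\cala$ in $\cald$ and the conjunction of $\cald$-pullback asymptotic compactness and the existence of a closed measurable $\cald$-pullback absorbing set $K$ in $\cald$, the uniqueness of $\cala$, and the identities \eqref{app5_a1} and \eqref{app5_a2}, which are just \eqref{attform1} and \eqref{attform2} with the parameters $(\omega_1,\omega_2)$ written $(\tau,\omega)$. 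Hence only \eqref{app5_a3} needs a separate argument.

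By \eqref{app5_a2} it suffices to fix $\tau\in\R$ and $\omega\in\Omega_2$ and to show that $\{\psi(0,\tau,\omega):\psi$ is a $\cald$-complete orbit of $\Phi\}$ equals $\{\xi(\tau,\omega):\xi$ is a $\cald$-complete quasi-solution of $\Phi$ in the sense of \eqref{nonau22}$\}$. For the inclusion ``$\subseteq$'', take a $\cald$-complete orbit $\psi$; by Definition \ref{comporbit} there is $D\in\cald$ with $\psi(t,s,\omega')\in D(\thonet s,\thtwot\omega')=D(s+t,\thtwot\omega')$ for all $t,s\in\R$, $\omega'\in\Omega_2$. Set $\xi:=\xi^\tau$, that is, $\xi(\sigma,\omega')=\psi(\sigma-\tau,\tau,\theta_{2,\tau-\sigma}\omega')$; the text already verifies that $\xi$ satisfies \eqref{nonau22} and that $\xi(\tau,\omega)=\psi(0,\tau,\omega)$. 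Substituting the membership of $\psi$ gives $\xi(\sigma,\omega')\in D\bigl(\tau+(\sigma-\tau),\,\theta_{2,\sigma-\tau}\theta_{2,\tau-\sigma}\omega'\bigr)=D(\sigma,\omega')$, so $\xi$ is a $\cald$-complete quasi-solution and $\psi(0,\tau,\omega)=\xi(\tau,\omega)$ belongs to the right-hand set. For ``$\supseteq$'', take a $\cald$-complete quasi-solution $\xi$; there is $D\in\cald$ with $\xi(t,\omega')\in D(t,\omega')$ for all $t\in\R$, $\omega'\in\Omega_2$. Define $\psi$ by \eqref{nonau23}, that is, $\psi(t,s,\omega')=\xi(t+s,\thtwot\omega')$; the text already verifies that $\psi$ is a complete orbit of $\Phi$ and that $\psi(0,\tau,\omega)=\xi(\tau,\omega)$. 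Then $\psi(t,s,\omega')\in D(t+s,\thtwot\omega')=D(\thonet s,\thtwot\omega')$, so $\psi$ is a $\cald$-complete orbit and $\xi(\tau,\omega)=\psi(0,\tau,\omega)$ belongs to the left-hand set. Both inclusions together with \eqref{app5_a2} give \eqref{app5_a3}.

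I do not anticipate any real obstacle: the whole argument is a change of variables between the two equivalent ways of encoding the dependence on the initial time. The one point needing care is the bookkeeping with the two groups $\thonet$ and $\thtwot$ --- that $\cald$-membership of an orbit $\psi$ is tested at the shifted parameters $(\thonet\omega_1,\thtwot\omega_2)=(s+t,\thtwot\omega_2)$ whereas that of a quasi-solution $\xi$ is tested at the plain parameters $(t,\omega)$ --- and the verification that the maps $\psi\mapsto\xi^\tau$ and $\xi\mapsto\psi$ interchange these two conditions, which is the short computation carried out above.
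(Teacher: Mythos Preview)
Your proposal is correct and follows essentially the same approach as the paper: the paper simply states that the result follows ``using \eqref{nonau21}--\eqref{nonau23} and applying Theorem \ref{att} to the case $\Omega_1=\R$,'' and your argument is exactly this, with the added merit of explicitly verifying that the maps $\psi\mapsto\xi^\tau$ and $\xi\mapsto\psi$ preserve the $\cald$-membership conditions, a detail the paper leaves implicit.
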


   We now consider  periodic pullback attractors.
   Let $T>0$   
and 
 $\Phi$  be  a periodic cocycle with period $T$, that is,  
 for every  $t \ge 0$,  $\tau \in \R$ and $\omega \in \Omega_2$,  
$ 
 \Phi (t, \tau + T,  \omega, \cdot ) = \Phi (t, \tau,   \omega, \cdot)$.  Then
 it follows    from Theorems \ref{periodatt}
 and \ref{periodatt2}  that  the $\cald$-pullback attractor of $\Phi$
 is also periodic  under certain circumstances.

\begin{prop}
\label{app6} 
Suppose  $\Phi$   is  a continuous  periodic   cocycle
with period $T>0$  on $X$
over $(\R,  \{\thonet\}_{t \in \R})$
and
$(\Omega_2, \calftwo, P,  \{\thtwot\}_{t \in \R})$, where
$\{\thonet\}_{t \in \R}$ is defined by \eqref{shift1}.
 Let   $\cald$   be  a  neighborhood closed
   and $T$-translation invariant collection of some  families of   nonempty subsets of
$X$.
 If
$\Phi$ is $\cald$-pullback asymptotically
compact in $X$ and $\Phi$ has a  closed
   measurable (w.r.t. the $P$-completion of $\calftwo$)
     $\cald$-pullback absorbing set
  $K$ in $\cald$, then $\Phi$
  has a unique periodic 
   $\cald$-pullback
attractor $\cala \in \cald$
with period  $T$, i.e.,    
for each $\tau \in \R$ 
and $\omega \in \Omega_2$,  
$\cala(\tau + T, \omega ) = \cala (\tau , \omega)$.
\end{prop}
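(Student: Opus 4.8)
The plan is to obtain Proposition \ref{app6} directly from Theorem \ref{periodatt} by specializing the abstract parametric dynamical system $(\Omega_1, \{\thonet\}_{t \in \R})$ to $(\R, \{\thonet\}_{t \in \R})$, where $\{\thonet\}_{t \in \R}$ is the shift group \eqref{shift1}. First I would check that this is a legitimate parametric dynamical system in the sense of Section 2: with $\thonet(h) = h + t$ one has $\theta_{1,0} = \mathrm{id}_\R$ and $\theta_{1, s+t} = \theta_{1, t}\circ\theta_{1, s}$ for all $s, t \in \R$, so the two group axioms hold trivially, and no measurability is demanded of $\Omega_1$. Hence $\Phi$, viewed as in the Proposition, is exactly a continuous cocycle over $(\R, \{\thonet\}_{t\in\R})$ and $(\Omega_2, \calftwo, P, \{\thtwot\}_{t\in\R})$ in the sense of Definition \ref{ds1}.

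Next I would translate each hypothesis of Theorem \ref{periodatt} into the present notation under the identification $\omega_1 \in \Omega_1 \leftrightarrow \tau \in \R$ and $\thonet\omega_1 \leftrightarrow \tau + t$. The $T$-periodicity of $\Phi$ in the sense of Definition \ref{periodic-ds1}, namely $\Phi(t, \theta_{1, T}\omega_1, \omega_2, \cdot) = \Phi(t, \omega_1, \omega_2, \cdot)$, becomes $\Phi(t, \tau + T, \omega, \cdot) = \Phi(t, \tau, \omega, \cdot)$, which is precisely the periodicity assumed in the Proposition. Likewise, the $T$-translation of a family $D \in \cald$ from \eqref{DTfamily1} becomes $D_T(\tau, \omega) = D(\theta_{1, T}\tau, \omega) = D(\tau + T, \omega)$, so the assumed $T$-translation invariance of $\cald$ is exactly invariance of $\cald$ under the shift $\tau \mapsto \tau + T$ in the first argument. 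The remaining hypotheses — $\cald$ neighborhood closed, $\Phi$ being $\cald$-pullback asymptotically compact in $X$, and the existence of a closed measurable (w.r.t. the $P$-completion of $\calftwo$) $\cald$-pullback absorbing set $K \in \cald$ — carry over verbatim. Then Theorem \ref{periodatt} applies and yields a unique periodic $\cald$-pullback attractor $\cala \in \cald$ with period $T$ satisfying $\cala(\theta_{1, T}\omega_1, \omega_2) = \cala(\omega_1, \omega_2)$; rewriting the first argument via \eqref{shift1} gives $\cala(\tau + T, \omega) = \cala(\tau, \omega)$ for all $\tau \in \R$ and $\omega \in \Omega_2$, which is the asserted conclusion, and uniqueness is inherited from Theorem \ref{att}.

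There is essentially no substantive obstacle here: the entire proof is the bookkeeping that reconciles the notation of Section 2 with the choice $\Omega_1 = \R$, $\thonet(h) = h + t$ of this subsection. The one point that warrants explicit care — the closest thing to a delicate step — is confirming that $T$-translation invariance of $\cald$ in Definition \ref{defTlation} genuinely reduces to the natural shift-invariance, since this is the only hypothesis of Theorem \ref{periodatt} whose form (rather than literal content) changes upon specialization. It is also worth noting that Theorem \ref{periodatt2} is not the appropriate tool here, as it presupposes a periodic absorbing set, whereas we only assume the existence of some closed measurable $\cald$-pullback absorbing set; Theorem \ref{periodatt} is precisely the statement designed to deduce periodicity of the attractor without that extra hypothesis.
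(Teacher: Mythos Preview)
Your proposal is correct and matches the paper's approach exactly: the paper states Proposition~\ref{app6} immediately after remarking that ``it follows from Theorems~\ref{periodatt} and~\ref{periodatt2} that the $\cald$-pullback attractor of $\Phi$ is also periodic under certain circumstances,'' offering no further proof. Your write-up is simply a careful unpacking of this specialization $\Omega_1 = \R$, $\thonet(h) = h+t$, and your observation that Theorem~\ref{periodatt} (rather than Theorem~\ref{periodatt2}) is the relevant tool under the stated hypotheses is accurate.
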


We now discuss   the second approach to deal with 
non-autonomous equations by taking $\Omega_1
=\Omega_f$, where $f: \R \to  X$  is 
a  parameter contained
in an equation.
Note that for any $h_1, \ h_2 \in \R$
with $h_1 \neq h_2$,  we have
$f^{h_1} \neq f^{h_2}$
as long as $f$ is not periodic in $X$.
In this case, we can define a group
$\{\thonet\}_{t \in \R}$ acting on $\Omega_f$   by
\be\label{shift2}
\thonet f^h = f^{t+h} \quad
\mbox{  for all } \ t \in \R
\quad \mbox{ and } \ f^h \in \Omega_f.
\ee

 If  $f$ is   a  periodic   function
    with   minimal period $T>0$, then
    $\Omega_f =\Omega_f^T$,  where
 $\Omega_f^T
      =\{f^h:   0\le h <T \}$. 
For  each $ t \in \R$,
 we  define a map
$\thonet$: $\Omega_f^T \to \Omega_f^T$  such that
for each   
$0\le h <T$,   
\be
\label{shift3}
\thonet f^h = f^\tau, 
\ee
where  $\tau \in [0, T)$ is the unique number
such that  
  $t+h =mT  + \tau$  for
  some   integer $m$.  
Then $\{\thonet\}_{t \in \R}$ given
by \eqref{shift3} is  a group acting on $\Omega_f^T$
  for  a periodic  $f$   with  minimal period $T$.
  
  We first consider   the case where $f$ is not periodic in $X$.  
   Let  $\Phi$:  $ \R^+ \times \Omega_f \times  \Omega_2  \times X$  
    $  \to X$  be    
  a continuous  cocycle on $X$
over $(\Omega_f,  \{\thonet\}_{t \in \R})$
and $(\Omega_2, \calftwo, P, \{\thtwot\}_{t \in \R})$, 
where $\{\thonet\}_{t \in \R}$  is defined by \eqref{shift2}.
Suppose  $\psi$:   
    $\R \times \Omega_f  \times \Omega_2 
    \to X$ is a complete orbit of $\Phi$, that is,  
       for every  $t \ge 0$,  $\tau \in \R$,  
 $f^h \in \Omega_f$  and  $\omega \in \Omega_2$, 
\be
\label{nonau27}
 \Phi (t,   f^{\tau +h},  \theta_{2, \tau} \omega,
  \psi (\tau, f^h, \omega ) )
  = \psi (t + \tau, f^h, \omega).
\ee
Let $\xi: \R  \times \Omega_2 \to X$ satisfy,
 for all $t \ge 0$,  $f^h \in \Omega_f$  and $\omega \in \Omega_2$,
\be
\label{nonau28}
\Phi(t, f^h, \omega,  \xi (h, \omega)) = \xi (t+ h, \thtwot \omega).
\ee 
Such  a   function $\xi$  is called a complete 
quasi-solution of $\Phi$ in the sense   of  \eqref{nonau28}.
If,  in addition,  
 there exists $D=\{ D(f^t, \omega ): f^t \in \Omega_f,
 \omega \in \Omega_2
  \} \in \cald$
such that $\xi (t, \omega  ) \in D(f^t, \omega)$ for all $ t \in \R$ and $\omega \in \Omega_2$,
then, we say       $\xi$  is   a $\cald$-complete  quasi-solution
of $\Phi$    in the sense of
\eqref{nonau28}.
If $\psi: \R \times \Omega_f \times \Omega_2$  $ \to X$ is a $\cald$-complete  orbit
of $\Phi$  by  Definition \ref{comporbit}, 
then 
for every $h \in \R$,  we can show    that
 the  map
$\xi^h$:  $ (\tau, \omega)  \in \R \times \Omega_2$
$\to    \psi (\tau -h, f^h,  \theta_{2, h-\tau} \omega)$ is a $\cald$-complete 
quasi-solution
of $\Phi$ in the sense of \eqref{nonau28}.   
 Vice versa,   if   $\xi: \R \times \Omega_2  \to X$  is a 
  $\cald$-complete    quasi-solution
   of  $\Phi$ by
   \eqref{nonau28},
  then  the  map $\psi: (t, f^h, \omega)\in$ 
  $\R \times \Omega_f \times \Omega_2 \to  
  \xi (t+h, \thtwot\omega)
$ is a $\cald$-complete  orbit of $\Phi$
in terms of  Definition \ref{comporbit}, 
and $\psi(0, f^h, \omega) =\xi(h, \omega)$.
Thus,    we have the following result from Theorem \ref{att}.

\begin{prop}
\label{app7}
Suppose    $f: \R \to X$ is   not   a    periodic function. 
Let $\cald$ be a  neighborhood closed  collection of some 
 families of   nonempty subsets of
$X$ and $\Phi$  be a continuous   cocycle on $X$
over $(\Omega_f,  \{\thonet\}_{t \in \R})$
and
$(\Omega_2, \calftwo, P,  \{\thtwot\}_{t \in \R})$,
where $\{\thonet\}_{t \in \R}$ is defined by \eqref{shift2}.
  Then
$\Phi$ has a  $\cald$-pullback
attractor $\cala$  in $\cald$
if and only if
$\Phi$ is $\cald$-pullback asymptotically
compact in $X$ and $\Phi$ has a  closed
     measurable (w.r.t. the $P$-completion of $\calftwo$)
       $\cald$-pullback absorbing set
  $K$ in $\cald$.
  The $\cald$-pullback
attractor $\cala $  is unique   and is given  by,
for each $f^h \in \Omega_f$ and $\omega \in \Omega_2$,
$$
\cala (f^h, \omega )
=\Omega(K, f^h , \omega)
=\bigcup_{B \in \cald} \Omega(B, f^h , \omega)
$$
$$
 =\{\psi(0, f^h, \omega ): \psi \ \mbox{     is a  }\
  \cald{\rm -}  \mbox{complete  orbit  of }  \Phi
   \  \mbox{ under Definition }
  \eqref{comporbit}
  \}
 $$
$$
   = \{ \xi (h, \omega): \xi \mbox{ is a }\  \cald{\rm -}
\mbox{complete  quasi-solution  of }
\Phi \mbox{  in the sense of } \ \eqref{nonau28} \}.
$$
  \end{prop}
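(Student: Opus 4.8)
The plan is to deduce Proposition~\ref{app7} from Theorem~\ref{att} and then to translate the characterization of the $\cald$-pullback attractor by $\cald$-complete orbits into the language of $\cald$-complete quasi-solutions. First I would verify that $(\Omega_f,\{\thonet\}_{t\in\R})$, with $\thonet$ defined by \eqref{shift2}, is a parametric dynamical system: $\theta_{1,0}f^h=f^h$ and $\theta_{1,s+t}f^h=f^{s+t+h}=\theta_{1,t}(f^{s+h})=\theta_{1,t}\circ\theta_{1,s}(f^h)$, so the required group identities hold. Because $f$ is not periodic, $h\mapsto f^h$ is injective, so $\Omega_f$ is genuinely parametrized by $\R$ and each element $f^h$ of $\Omega_f$ determines a unique $h\in\R$; in particular the assignment $f^h\mapsto\xi(h,\omega)$ appearing in the statement is well defined. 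Granting this, Theorem~\ref{att} applied with $\Omega_1=\Omega_f$ yields at once: the existence of a $\cald$-pullback attractor $\cala$ in $\cald$ is equivalent to $\cald$-pullback asymptotic compactness of $\Phi$ together with the existence of a closed measurable $\cald$-pullback absorbing set $K$ in $\cald$; $\cala$ is unique; and, by \eqref{attform1}--\eqref{attform2}, $\cala(f^h,\omega)=\Omega(K,f^h,\omega)=\bigcup_{B\in\cald}\Omega(B,f^h,\omega)=\{\psi(0,f^h,\omega):\psi\text{ is a }\cald\text{-complete orbit of }\Phi\}$.

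Thus only the last displayed formula requires work, and for that I would set up the correspondence between $\cald$-complete orbits and $\cald$-complete quasi-solutions sketched before the proposition. For the inclusion $\cala(f^h,\omega)\subseteq\{\xi(h,\omega):\xi\text{ a }\cald\text{-complete quasi-solution}\}$, take $y=\psi(0,f^h,\omega)$ with $\psi$ a $\cald$-complete orbit, say with witness $D\in\cald$, and define $\xi^h(\tau,\omega')=\psi(\tau-h,f^h,\theta_{2,h-\tau}\omega')$. Applying the complete-orbit identity \eqref{nonau27} with initial time $\tau-h$, parameter $f^h$, and sample point $\theta_{2,h-\tau}\omega'$, and using the group law of $\theta_{2,\cdot}$, one checks that $\xi^h$ satisfies \eqref{nonau28}; moreover $\psi(\tau-h,f^h,\theta_{2,h-\tau}\omega')\in D(\theta_{1,\tau-h}f^h,\theta_{2,\tau-h}\theta_{2,h-\tau}\omega')=D(f^{\tau},\omega')$, so $\xi^h$ is a $\cald$-complete quasi-solution, and $\xi^h(h,\omega)=\psi(0,f^h,\omega)=y$. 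For the reverse inclusion, take $y=\xi(h,\omega)$ with $\xi$ a $\cald$-complete quasi-solution, with witness $D\in\cald$, and set $\psi(t,f^{h'},\omega')=\xi(t+h',\thtwot\omega')$. Applying \eqref{nonau28} with initial time $\tau+h'$ and sample point $\theta_{2,\tau}\omega'$ shows $\psi$ satisfies \eqref{nonau27}; and since $\psi(t,f^{h'},\omega')=\xi(t+h',\thtwot\omega')\in D(f^{t+h'},\thtwot\omega')=D(\theta_{1,t}f^{h'},\theta_{2,t}\omega')$, the map $\psi$ is a $\cald$-complete orbit, with $\psi(0,f^h,\omega)=\xi(h,\omega)=y$, so $y\in\cala(f^h,\omega)$ by the formula already established. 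Combining the two directions yields $\cala(f^h,\omega)=\{\xi(h,\omega):\xi\text{ is a }\cald\text{-complete quasi-solution of }\Phi\text{ in the sense of }\eqref{nonau28}\}$, which completes the proof.

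The only genuinely delicate point is the bookkeeping of the $\theta_{2,\cdot}$-shifts of the sample points when verifying that the two constructions respect \eqref{nonau27} and \eqref{nonau28}; everything else is a direct invocation of Theorem~\ref{att}. I would also note that an entirely parallel argument, invoking Theorems~\ref{periodatt} and \ref{periodatt2} instead and taking $\Omega_1=\Omega_f^T$, handles the periodic case when $f$ has minimal period $T$; but that lies outside Proposition~\ref{app7}, which assumes $f$ non-periodic.
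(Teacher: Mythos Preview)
Your proposal is correct and follows essentially the same approach as the paper: apply Theorem~\ref{att} with $\Omega_1=\Omega_f$ to obtain the iff-criterion, uniqueness, and the first three characterizations of $\cala(f^h,\omega)$, and then establish the final equality by the two constructions $\xi^h(\tau,\omega')=\psi(\tau-h,f^h,\theta_{2,h-\tau}\omega')$ and $\psi(t,f^{h'},\omega')=\xi(t+h',\thtwot\omega')$, which are precisely the maps the paper indicates in the paragraph preceding the proposition. Your write-up simply makes explicit the verifications (group identities, injectivity of $h\mapsto f^h$, the $\theta_2$-bookkeeping, and the $\cald$-containment) that the paper leaves to the reader.
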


  We next examine 
the relations  between  Propositions
\ref{app5} and \ref{app7}.
In this respect,    we will prove    that
  these two  propositions
are  essentially  the  same  
if  $f$  is  not a periodic  function. 
Suppose   $\Phi$ is a 
  continuous   cocycle on $X$
over 
$(\Omega_f,  \{\thonet\}_{t \in \R})$
and
$(\Omega_2, \calftwo, P,  \{\thtwot\}_{t \in \R})$
with  $\{\thonet\}_{t \in \R} $  being
given by \eqref{shift2}.
Then 
we can associate  with $\Phi$ 
another continuous cocycle $\tilde{\Phi}$:
$ \R^+ \times \R \times\Omega_2 \times X
\to X$ by
\be\label{nonau29}
{\tilde{\Phi}} (t, \tau, \omega,  \cdot)
=\Phi (t, f^\tau, \omega,  \cdot), \quad \mbox{ for all }
t \ge 0, \  \tau \in \R \ \mbox{ and } \ 
 \omega \in \Omega_2.
\ee
Indeed,  one can show    that ${\tilde{\Phi}}$
is a continuous cocycle  on $X$ over
$(\R,  \{\thonet\}_{t \in \R})$
and
$(\Omega_2, \calftwo, P,  \{\thtwot\}_{t \in \R})$
 with  $\{\thonet\}_{t \in \R} $ being
 given by \eqref{shift1}.
Suppose  that 
$ 
\cald = \{ D= \{D(f^h, \omega): f^h \in \Omega_f, \omega \in \Omega_2 \} \}
$  is  a collection of some families  of nonempty subsets of $X$.
Give   $D= \{D(f^h, \omega): f^h \in \Omega_f, \omega \in \Omega_2  \} \in \cald$,  denote by 
\be
\label{nonau30}
{\tilde{D}} = \{ {\tilde{D}} (\tau, \omega):  
{\tilde{D}} (\tau, \omega) = D(f^\tau, \omega), \ \tau  \in \R, \  \omega \in \Omega_2 \}.
\ee
Let  ${\tilde{\cald}}$  be the collection corresponding to $\cald$ 
  as defined by  $${\tilde{\cald}}  = \{ 
{\tilde{D}} :  {\tilde{D}} \mbox{ is given by }  \eqref{nonau30},   \  D \in \cald \}.
$$
 If   $\cala = \{ \cala(f^h, \omega): 
  f^h \in \Omega_f , \ \omega \in \Omega_2
 \}$   is
the $\cald$-pullback attractor of $\Phi$,  then it is easy to check 
   that
  ${\tilde{\cala}} = \{{\tilde{ \cala}} (\tau, \omega): \tau \in \R , \
  \omega \in \Omega_2
  \}$    is
  the  ${\tilde{\cald}}$-pullback attractor
  of ${\tilde{\Phi}}$, where
  ${\tilde{ \cala}} (\tau, \omega) = \cala(f^\tau, \omega)$  for   each
  $\tau \in \R$
  and $\omega \in \Omega_2$.  The converse of this statement   is also  true if
  $f$ is not  a periodic function because, in 
  that   case, one can define
  $\Phi$, $\cald$   and $\cala$  for given
  ${\tilde{\Phi}}$,  ${\tilde{\cald}} $  and 
  ${\tilde{\cala}} $  by the reverse
  of  the above process.  However,    for    periodic
  $f$ 
  with  minimal period $T>0$,    we can not define
   the collection  $\cald$ from  ${\tilde{\cald}}$
  by the  inverse of \eqref{nonau30}  if  there  are
   $\tilde{D} \in {\tilde{\cald}}$,
      $\tau \in \R$
      and $\omega \in \Omega_2$
  such that ${\tilde{D}}( \tau, \omega )
  \neq {\tilde{D}}(  \tau +T, \omega )$. 
  In this case,  we can take $\Omega_1 = \Omega_f^T$
  with $\{\thonet\}_{t \in \R }$  being given  by
  \eqref{shift3}.    
Then applying  Theorem \ref{att},  we  find  the following result.

\begin{prop}
\label{app8}
Suppose 
 $f: \R \to X$   is   periodic     with   minimal period $T>0$.
Let $\cald$ be a
neighborhood closed  collection of some  families of   nonempty subsets of
$X$ and $\Phi$  be a continuous   cocycle on $X$
over $(\Omega_f^T,  \{\thonet\}_{t \in \R})$
and
$(\Omega_2, \calftwo, P,  \{\thtwot\}_{t \in \R})$,
where $\{\thonet\}_{t \in \R}$ is defined by \eqref{shift3}.
  Then
$\Phi$ has a  $\cald$-pullback
attractor $\cala$  in $\cald$
if and only if
$\Phi$ is $\cald$-pullback asymptotically
compact in $X$ and $\Phi$ has a  closed
     measurable (w.r.t. the $P$-completion of $\calftwo$)
       $\cald$-pullback absorbing set
  $K$ in $\cald$.
  The $\cald$-pullback
attractor $\cala $  is unique   and is given  by,
for each $f^h \in \Omega_f^T$ and $\omega \in \Omega_2$,
$$
\cala (f^h, \omega )
=\Omega(K, f^h , \omega)
=\bigcup_{B \in \cald} \Omega(B, f^h , \omega)
$$
$$
 =\{\psi(0, f^h, \omega ): \psi \ \mbox{     is a   }\
  \cald{\rm -}  \mbox{complete  orbit  of }  \Phi
   \  \mbox{ under Definition }
  \eqref{comporbit}
  \}
 $$
$$
   = \{ \xi (h, \omega): \xi \mbox{ is a }\  \cald{\rm -}
\mbox{complete  quasi-solution  of }
\Phi \mbox{  in the sense of } \ \eqref{nonau28} \}.
$$
  \end{prop}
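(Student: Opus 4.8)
The plan is to deduce the result directly from Theorem~\ref{att}, in exactly the way Proposition~\ref{app7} was obtained, once it is checked that the periodic setting genuinely fits the abstract framework of Section~2. First I would verify that \eqref{shift3} defines a group action on $\Omega_f^T$: since $f$ has minimal period $T$, one has $f^h = f^{h+T}$ for every $h \in \R$, so $\Omega_f = \Omega_f^T$ as sets; moreover, for $0 \le h < T$ and $t \in \R$, writing $t + h = mT + \tau$ with $m \in \mathbb{Z}$ and $\tau \in [0, T)$ gives $f^{t+h} = f^\tau$, so the map $\thonet$ defined by \eqref{shift3} is nothing but the shift \eqref{shift2} read on the canonical representatives $\{f^h : 0 \le h < T\}$. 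In particular $\theta_{1,0}$ is the identity on $\Omega_f^T$ and $\theta_{1, s+t} = \theta_{1,t} \circ \theta_{1,s}$ for all $s, t \in \R$, so $(\Omega_f^T, \{\thonet\}_{t \in \R})$ is a parametric dynamical system and $\Phi$ is a continuous cocycle on $X$ over it and $(\Omega_2, \calftwo, P, \{\thtwot\}_{t \in \R})$ in the sense of Definition~\ref{ds1}.

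With this identification, Theorem~\ref{att} applies verbatim with $\Omega_1 = \Omega_f^T$, and it yields at once: (a) the claimed equivalence, namely that $\Phi$ has a $\cald$-pullback attractor $\cala \in \cald$ if and only if $\Phi$ is $\cald$-pullback asymptotically compact in $X$ and $\Phi$ has a closed measurable $\cald$-pullback absorbing set $K \in \cald$; (b) the uniqueness of $\cala$; and (c) the formulas
\begin{align*}
\cala(f^h, \omega) &= \Omega(K, f^h, \omega) = \bigcup_{B \in \cald} \Omega(B, f^h, \omega) \\
&= \{\psi(0, f^h, \omega) : \psi \mbox{ is a } \cald\mbox{-complete orbit of } \Phi\},
\end{align*}
valid for all $f^h \in \Omega_f^T$ and $\omega \in \Omega_2$. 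This gives the first three expressions for $\cala(f^h, \omega)$ stated in the proposition.

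It then remains to rewrite $\cala(f^h, \omega)$ through $\cald$-complete quasi-solutions in the sense of \eqref{nonau28}. This is the orbit/quasi-solution dictionary recorded just before Proposition~\ref{app7}, which relies only on the defining relations \eqref{nonau27}--\eqref{nonau28} and so carries over here unchanged: given a $\cald$-complete orbit $\psi$ of $\Phi$, the map $\xi^h(\tau, \omega) = \psi(\tau - h, f^h, \theta_{2, h-\tau}\omega)$ is a $\cald$-complete quasi-solution of $\Phi$ in the sense of \eqref{nonau28} with $\xi^h(h, \omega) = \psi(0, f^h, \omega)$; conversely, given a $\cald$-complete quasi-solution $\xi$, the map $\psi(t, f^h, \omega) = \xi(t + h, \thtwot\omega)$ is a $\cald$-complete orbit of $\Phi$ with $\psi(0, f^h, \omega) = \xi(h, \omega)$. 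Hence the set of values $\psi(0, f^h, \omega)$ ranging over all $\cald$-complete orbits $\psi$ equals the set of values $\xi(h, \omega)$ ranging over all $\cald$-complete quasi-solutions $\xi$ in the sense of \eqref{nonau28}, which is precisely the last formula for $\cala(f^h, \omega)$.

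The only step I expect to demand any real care is the first one — checking that \eqref{shift3} is well defined on the representatives $\{f^h : 0 \le h < T\}$ and inherits the group law from \eqref{shift2}, so that the periodic case is genuinely an instance of Theorem~\ref{att}. Everything afterward is a direct citation of Theorem~\ref{att} together with the already-established translation between complete orbits and complete quasi-solutions; in particular, the collection $\cald$ over $\Omega_f^T$, the notion of $\cald$-pullback asymptotic compactness, and the measurability requirement on $K$ are the same objects as in Theorem~\ref{att}, so no additional hypotheses are introduced by passing to the periodic setting.
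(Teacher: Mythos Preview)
Your proposal is correct and follows essentially the same approach as the paper: the paper's argument is literally the single sentence ``Then applying Theorem~\ref{att}, we find the following result,'' and you have simply unpacked what that citation entails---checking that \eqref{shift3} gives a genuine group action on $\Omega_f^T$ so that Theorem~\ref{att} applies, and invoking the orbit/quasi-solution dictionary from the discussion preceding Proposition~\ref{app7} to obtain the final characterization. There is nothing to add.
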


Note that the  $\cald$-pullback attractor $\cala
=\{\cala (f^h, \omega): h \in [0, T), \omega \in \Omega_2 \}$ of $\Phi$
obtained in Proposition \ref{app8} can be extended to a periodic
attractor by setting,  for every $h \in \R$ and $\omega \in \Omega_2$, 
$\cala(f^h, \omega)= \cala ( f^ \tau,  \omega)$
with $\tau \in [0, T)$ such that $h = mT + \tau$  for some
integer $m$.

\subsection{Attractors  for  Equations with only Random Forcing  Terms}
 
  To deal with differential equations with only stochastic forcing
  terms but without non-autonomous deterministic terms,
  we may take $\Omega_1 = \{\omega^*\}$ as a singleton, and define
  $\{\thonet\}_{t \in \R}$ by $\thonet \omega^* = \omega^*$
  for all $ t \in \R$.
  In this case, Definition \ref{ds1}   is the same as the concept of cocycles 
    as introduced in \cite{arn1, cra2, fla1, schm1}.
  We now  drop the dependence of
  all variables on $\Omega_1$ and write
   $(\Omega_2, \calftwo, P,  \{\thtwot\}_{t \in \R})$
   as $(\Omega, \calf, P,  \{\theta_t\}_{t \in \R})$.
  For the reader's convenience,  we rephrase Theorem \ref{att}
  as follows.

    \begin{prop}
\label{app4}
Let 
 $\cald$ be  a  neighborhood closed  collection of some  families of   nonempty subsets of
$X$ and $\Phi$  be a continuous   cocycle on $X$
over $(\Omega, \calf, P,  \{\theta_t\}_{t \in \R})$.
Then
$\Phi$ has a  $\cald$-pullback
attractor $\cala$  in $\cald$
if and only if
$\Phi$ is $\cald$-pullback asymptotically
compact in $X$ and $\Phi$ has a  closed
   measurable (w.r.t. the $P$-completion of $\calf$)
     $\cald$-pullback absorbing set
  $K$ in $\cald$.
  The $\cald$-pullback
attractor $\cala$   is unique   and is given  by,
for each
$\omega  \in \Omega $,
$$
\cala ( \omega )
=\Omega(K, \omega )
=\bigcup_{B \in \cald} \Omega(B, \omega )
$$
$$
=\{\psi(0, \omega ): \psi  \mbox{  is a }  \cald {\rm-}
  \mbox{complete  orbit of }   \Phi 
   \  \mbox{ under Definition }
  \eqref{comporbit}
  \}.
  $$
  \end{prop}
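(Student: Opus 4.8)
The plan is to obtain Proposition \ref{app4} as a direct specialization of Theorem \ref{att}, taking the first parametric space to be a singleton. Concretely, I would set $\Omega_1 = \{\omega^*\}$ and define the trivial group $\theta_{1,t}\omega^* = \omega^*$ for all $t \in \R$, so that $(\Omega_1, \{\theta_{1,t}\}_{t \in \R})$ is a parametric dynamical system, while $(\Omega_2,\calftwo,P,\{\theta_{2,t}\}_{t\in\R})$ is taken to be the given $(\Omega,\calf,P,\{\theta_t\}_{t\in\R})$. First I would verify the (entirely formal) point that a continuous cocycle $\Phi$ on $X$ over $(\Omega,\calf,P,\{\theta_t\}_{t\in\R})$ in the sense of \cite{arn1, cra2, fla1, schm1} is, after inserting the dummy argument $\omega^*$, exactly a continuous cocycle over $(\Omega_1,\{\theta_{1,t}\}_{t\in\R})$ and $(\Omega,\calf,P,\{\theta_t\}_{t\in\R})$ in the sense of Definition \ref{ds1}: conditions (i)--(iv) there collapse to the usual cocycle axioms because $\theta_{1,\tau}\omega^* = \omega^*$.

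Next I would set up the dictionary between the one-parameter objects of this subsection and the two-parameter objects of Section 2. A family $D = \{D(\omega): \omega \in \Omega\}$ of nonempty subsets of $X$ is identified with the family $\{D(\omega^*,\omega): \omega^*\in\Omega_1,\ \omega\in\Omega\}$, where $D(\omega^*,\omega) := D(\omega)$, and this yields a bijection onto the collections of Section 2 whose members do not depend on $\omega_1$. Under this identification one checks, step by step, that: neighborhood closedness of $\cald$ (Definition \ref{defepsneigh1}) is preserved, since the $\varepsilon$-neighborhood is formed fiberwise over $\Omega$; the notion of a closed measurable $\cald$-pullback absorbing set goes over verbatim, with measurability with respect to the $P$-completion of $\calftwo$ becoming measurability with respect to the $P$-completion of $\calf$; $\cald$-pullback asymptotic compactness (Definition \ref{asycomp}) and the three defining conditions of a $\cald$-pullback attractor (Definition \ref{defatt}) reduce literally once every occurrence of $\theta_{1,\pm t}\omega_1$ is deleted; and a $\cald$-complete orbit (Definition \ref{comporbit}) becomes a map $\psi:\R\times\Omega\to X$ satisfying $\Phi(t,\theta_\tau\omega,\psi(\tau,\omega)) = \psi(t+\tau,\omega)$ and contained in some member of $\cald$.

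With the dictionary in hand, Theorem \ref{att} applies word for word: $\Phi$ admits a $\cald$-pullback attractor $\cala$ in $\cald$ if and only if $\Phi$ is $\cald$-pullback asymptotically compact and has a closed measurable (with respect to the $P$-completion of $\calf$) $\cald$-pullback absorbing set $K$ in $\cald$, and in that case $\cala$ is unique and given by the right-hand sides of \eqref{attform1}--\eqref{attform2}. Dropping the inert variable $\omega^*$, those formulas read $\cala(\omega) = \Omega(K,\omega) = \bigcup_{B\in\cald}\Omega(B,\omega) = \{\psi(0,\omega):\psi \text{ is a } \cald\text{-complete orbit of }\Phi\}$, which is exactly the assertion of Proposition \ref{app4}.

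I do not expect a genuine obstacle here; the only thing requiring care is making the correspondence between the two formalisms faithful on every structure invoked in the statement, in particular confirming that the measurability clauses and the neighborhood-closedness hypothesis translate correctly, which they do trivially because the factor $\Omega_1$ is a point and contributes nothing.
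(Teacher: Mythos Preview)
Your proposal is correct and matches the paper's approach exactly: the paper does not give a separate proof of Proposition \ref{app4} but simply introduces it as a rephrasing of Theorem \ref{att} in the case $\Omega_1=\{\omega^*\}$ with the trivial action, dropping the dependence on $\Omega_1$ throughout. Your dictionary check is more explicit than what the paper writes, but the strategy is identical.
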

  
  It is worth noticing    that,
  in the case where $\Omega_1$ is a singleton,
  a map $\psi$:  $\R   \times \Omega_2 \to X$
  is  a complete orbit of $\Phi$
  by Definition \ref{comporbit}  if and only   if
  for each   $t \ge 0$,  $\tau \in \R$  and
  $\omega \in \Omega_2$,  there holds
  \be\label{cbitran1}
   \Phi (t, \theta_{2, \tau} \omega, \psi(\tau, \omega))=
  \psi (t+ \tau, \omega).
  \ee
  In other words,  if   a system contains  only
 random forcing without  other  non-autonomous  deterministic
 forcing, then the concept  of  a  complete orbit
 of the system should be defined by 
 \eqref{cbitran1}.   
 Note that  \eqref{cbitran1}   is different  from \eqref{nonau22} 
 and that is why  a function with property \eqref{nonau22} 
 is called a complete quasi-solution   instead of  a solution.
 
 As mentioned in the Introduction,  if the cocycle $\Phi$ is compact in
  the sense that it has a compact $\cald$-pullback absorbing set, 
   then the existence of 
 $\cald$-pullback attractors of $\Phi$ was proved in \cite{fla1}. 
 The compactness of $\Phi$  was later replaced by the 
    $\cald$-pullback asymptotic   compactness  and the existence of attractors
    in this   case was proved in \cite{bat1}. 
    Under further assumptions that $\cald$ is neighborhood closed, 
    Proposition \ref{app4}  shows that
    the $\cald$-pullback  asymptotic compactness
     and the existence
    of closed measurable $\cald$-pullback absorbing sets 
     of $\Phi$
      are  not only sufficient, but also necessary for existence of 
      $\cald$-pullback attractors. In addition, Proposition \ref{app4} 
      provides a characterization of 
   pullback    attractors  in terms of the $\cald$-complete orbits of $\Phi$.

\subsection{Attractors for Non-autonomous Deterministic   Systems}

As a special case of systems with both deterministic and random terms as discussed in Section \ref{sec31},  we now consider the systems with only
non-autonomous deterministic parameters.
Of course, all results presented in Section
 \ref{sec31} are   valid    in this special case.
 Because  pullback attractors  for
 non-autonomous deterministic systems
 are  interesting in their own right, 
 it is worth  to rewrite  the main results of Section \ref{sec31} for  this specific case.

  To handle non-autonomous deterministic systems,
  we may take
  $\Omega_2 =\{\omega^*\}$ as a singleton and
  either $\Omega_1 = \R$ or $\Omega_1 =\Omega_f$.
     Suppose  $\Omega_2 =\{\omega^*\}$ 
     is a singleton.  Let $P$ be the probability on
  $(\{\omega^*\}, \calftwo )$ with $\calftwo =\{ \{\omega^*\}, \emptyset \}$.
  For all  $t \in \R$, set $\thtwot \omega^* = \omega^*$.
   We first consider  the  case where $\Omega_1 =\R$
   with $ \{\thonet\}_{t \in \R}$ being given by
   \eqref{shift1}.
   Since $\Omega_2$ is a singleton,   from now on, we will drop the dependence of all variables on $\Omega_2$.
   
    Let  $\Phi$:    $ \R^+ \times \R   \times X$
        $ \to  X$  be  a continuous  cocycle on $X$
over $(\R,  \{\thonet\}_{t \in \R})$ with
$\{\thonet\}_{t \in \R}$ being given by
\eqref{shift1}.  The existence of  pullback 
attractors  for such $\Phi$ has been  investigated
by many authors in \cite{car3, car4, wan4}
and the references  therein.  Our results  here will provide
not only  sufficient    but also necessary  criteria for existence of $\cald$-pullback attractors. 
 
    Let $\psi$: 
    $\R \times \R  \to X$ be a 
      a complete orbit of $\Phi$  in the sense of  Definition
      \ref{comporbit},  that is,   
       for every $t \ge 0$,  $\tau \in \R$ 
      and
 $s \in \R$,   there  holds:
\be
\label{nonau1}
 \Phi (t,   \tau + s,
  \psi (\tau, s ) )
  = \psi (t + \tau, s).
\ee
Let $\xi: \R \to X$ satisfy, for all $t \ge 0$   and $\tau \in \R$,
\be
\label{nonau2}
\Phi(t, \tau) \xi (\tau) = \xi (t+ \tau).
\ee
In the literature, any $\xi$ with property
\eqref{nonau2} is called a complete solution of $\Phi$,
see, e.g., \cite{bal2, che1, tem1}.
If there exists $D=\{ D(t): t \in \R \} \in \cald$
such that $\xi (t ) \in D(t)$ for all $ t \in \R$,
then  we  say   $\xi$  is   a $\cald$-complete solution of $\Phi$  in the sense of
\eqref{nonau2}.
Our definition \eqref{nonau1} is different but closely related
to \eqref{nonau2}.  Actually, if $\psi: \R \times \R \to X$
has   property  \eqref{nonau1}, then for every fixed $s \in \R$,
  $\xi^s (\cdot) = \psi(\cdot -s, s)$
  maps $\R$ into  $X$  with property \eqref{nonau2}.
In other words, for every $s \in \R$, $\xi^s$ is a complete
solution in the sense of \eqref{nonau2}, and 
   $\xi^s (s) = \psi (0, s)$.
Similarly, given  $\xi: \R \to X$  with property
\eqref{nonau2}, define a map $\psi: \R \times \R \to X$ by
\be\label{nonau3}
\psi (t , s) = \xi (t+s),
\quad \mbox{ for all } \ t, \ s  \in \R.
\ee
Then    
$\psi$ is a complete orbit  of $\Phi$  in
the sense of \eqref{nonau1}. 
 It follows from  \eqref{nonau3} and  Lemma  \ref{attstr1}
 that,  if $\Phi$ has a $\cald$-pullback
attractor $\cala=\{\cala(\tau): \tau \in \R \}$, then
for every $\tau \in \R$,
\be
\label{nonau5}
\cala(\tau)
= \{ \xi (\tau): \xi \mbox{ is a }\  \cald{\rm -complete}
\mbox{ solution in the sense of } \ \eqref{nonau2} \}.
\ee
The characterization of $\cala$ given by
\eqref{nonau5}
can be found in  \cite{che1} for non-autonomous deterministic equations,
where $\cala$ is called kernel sections instead of pullback attractors.
By  dropping the dependence of  variables on  $\Omega_2$,  we have the following  result from 
Proposition \ref{app5}.

\begin{prop}
\label{app1}
 Let  
 $\cald$ be   a  neighborhood closed  collection of some  
 families of   nonempty subsets of
$X$ and $\Phi$  be a continuous   cocycle on $X$
over $(\R,  \{\thonet\}_{t \in \R})$
with  $\{\thonet\}_{t \in \R} $ given by \eqref{shift1}.
 Then
$\Phi$ has a  $\cald$-pullback
attractor $\cala$  in $\cald$
if and only if
$\Phi$ is $\cald$-pullback asymptotically
compact in $X$ and $\Phi$ has a  closed
      $\cald$-pullback absorbing set
  $K$ in $\cald$.
  The $\cald$-pullback
attractor $\cala = \{ \cala(\tau)\}_{\tau \in \R}$   is unique   and is given  by,
for each $\tau  \in \R$,
$$
\cala (\tau )
=\Omega(K, \tau )
=\bigcup_{B \in \cald} \Omega(B, \tau )
$$
$$
 =\{\psi(0, \tau ): \psi \ \mbox{     is a  }\
  \cald{\rm -}  \mbox{complete  orbit  of }  \Phi 
   \  \mbox{ under Definition }
  \eqref{comporbit}
  \}
 $$
$$
   = \{ \xi (\tau): \xi \mbox{ is a }\  \cald{\rm -}
\mbox{complete  solution  of }
\Phi \mbox{  in the sense of } \ \eqref{nonau2} \}.
$$
  \end{prop}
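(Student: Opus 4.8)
The plan is to obtain Proposition \ref{app1} as the special case of Proposition \ref{app5} in which the probability space is a single point. First I would fix $\Omega_2 = \{\omega^*\}$, equip it with $\calftwo = \{\{\omega^*\}, \emptyset\}$ and the unique probability measure $P$ on it, and set $\theta_{2,t}\omega^* = \omega^*$ for all $t \in \R$, so that $(\Omega_2, \calftwo, P, \{\theta_{2,t}\}_{t\in\R})$ is a parametric dynamical system. Any continuous cocycle $\Phi$ on $X$ over $(\R, \{\thonet\}_{t\in\R})$ as in the statement is then identified, by adjoining the trivial $\Omega_2$-argument, with a continuous cocycle over $(\R, \{\thonet\}_{t\in\R})$ and $(\Omega_2, \calftwo, P, \{\theta_{2,t}\}_{t\in\R})$ in the sense of Definition \ref{ds1}, and conversely; likewise a collection $\cald$ of families of nonempty subsets of $X$ parametrized by $\tau \in \R$ is identified with one parametrized by $(\tau,\omega^*)$, and this identification preserves neighborhood closedness, the $\cald$-pullback absorbing property, $\cald$-pullback asymptotic compactness, and the $\Omega$-limit sets, so that $\Omega(B,\tau)$ becomes $\Omega(B,\tau,\omega^*)$.

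Next I would discard the measurability clause. Since $\calftwo$ is the trivial $\sigma$-algebra it coincides with its own $P$-completion, and every map from $\Omega_2$ into $\R$ is $(\calftwo, \calb(\R))$-measurable; hence any family $K$ with closed nonempty values in $X$ is automatically measurable with respect to the $P$-completion of $\calftwo$ in $\Omega_2$. Consequently the hypothesis of Proposition \ref{app5} that $K$ be a \emph{closed measurable} $\cald$-pullback absorbing set reduces to $K$ being a \emph{closed} $\cald$-pullback absorbing set, and the conclusion that $\cala$ is measurable carries no extra content. Applying Proposition \ref{app5} under the identification of the previous paragraph and suppressing the $\omega^*$-dependence therefore yields the sufficient-and-necessary criterion of Proposition \ref{app1} together with the first three lines of the asserted description of $\cala$.

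It remains to match the last line. In Proposition \ref{app5} the attractor is also written as $\{\xi(\tau,\omega): \xi$ a $\cald$-complete quasi-solution in the sense of \eqref{nonau22}$\}$, where such a $\xi: \R\times\Omega_2 \to X$ obeys $\Phi(t,\tau,\omega,\xi(\tau,\omega)) = \xi(t+\tau,\theta_{2,t}\omega)$. With $\Omega_2 = \{\omega^*\}$ and $\theta_{2,t}\omega^* = \omega^*$ this is exactly $\Phi(t,\tau)\xi(\tau) = \xi(t+\tau)$, i.e.\ \eqref{nonau2}, so $\cald$-complete quasi-solutions in the sense of \eqref{nonau22} coincide with $\cald$-complete solutions in the sense of \eqref{nonau2} (this can also be read off directly from \eqref{nonau3} and Lemma \ref{attstr1}, cf.\ \eqref{nonau5}); the corresponding formula of Proposition \ref{app5} then becomes the last line of Proposition \ref{app1}, and uniqueness of $\cala$ is inherited from Proposition \ref{app5}. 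There is essentially no obstacle in this argument: the only point needing attention is that the correspondence between cocycles over $(\R, \{\thonet\}_{t\in\R})$ alone and cocycles over $(\R, \{\thonet\}_{t\in\R})$ together with a one-point probability space is a bijection compatible with all the notions appearing in the statement, which is a routine check against Definitions \ref{ds1} and \ref{defatt}.
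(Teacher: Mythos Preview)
Your proposal is correct and follows essentially the same approach as the paper: the paper obtains Proposition \ref{app1} by specializing Proposition \ref{app5} to the case $\Omega_2 = \{\omega^*\}$, dropping the (then trivially satisfied) measurability requirement, and using the correspondence between $\cald$-complete orbits and $\cald$-complete solutions described in \eqref{nonau1}--\eqref{nonau5}. Your handling of the measurability clause and the identification of quasi-solutions with solutions is exactly what the paper intends.
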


In the case where 
 $\Phi$ is a periodic cocycle with period $T$
 (i.e.,  $  \Phi (t, s + T, \cdot ) = \Phi (t, s, \cdot)$
 for all $t \ge 0$   and $s \in \R$),  Proposition \ref{app6}
 implies the result  below.

\begin{prop}
\label{app2} 
Suppose  $\Phi$   is  a continuous  periodic   cocycle
with period $T>0$  on $X$
 over $(\R,  \{\thonet\}_{t \in \R})$
with  $\{\thonet\}_{t \in \R} $ given by \eqref{shift1}.
  Let   $\cald$   be  a  neighborhood closed
   and $T$-translation invariant collection of some 
    families of   nonempty subsets of
$X$.
 If
$\Phi$ is $\cald$-pullback asymptotically
compact in $X$ and $\Phi$ has a  closed 
     $\cald$-pullback absorbing set
  $K$ in $\cald$, then $\Phi$
  has a unique periodic 
   $\cald$-pullback
attractor $\cala =\{ \cala(\tau) \}_{\tau \in \R} \in \cald$,
that  is,   
for each $\tau \in \R$,  
$\cala(\tau + T) = \cala (\tau )$.
\end{prop}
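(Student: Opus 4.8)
The plan is to obtain Proposition \ref{app2} directly from Proposition \ref{app6} (equivalently, from Theorem \ref{periodatt}) by specializing the probability space $\Omega_2$ to a singleton. First I would set $\Omega_2=\{\omega^*\}$ with $\calftwo=\{\emptyset,\{\omega^*\}\}$, $P(\{\omega^*\})=1$ and $\theta_{2,t}\omega^*=\omega^*$ for all $t\in\R$, so that the $T$-periodic cocycle $\Phi$ over $(\R,\{\theta_{1,t}\}_{t\in\R})$ of Proposition \ref{app2} is exactly a $T$-periodic cocycle over $(\R,\{\theta_{1,t}\}_{t\in\R})$ and $(\Omega_2,\calftwo,P,\{\theta_{2,t}\}_{t\in\R})$ in the sense of Proposition \ref{app6}. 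The only hypothesis of Proposition \ref{app6} not appearing verbatim in Proposition \ref{app2} is that the closed $\cald$-pullback absorbing set $K$ be measurable with respect to the $P$-completion of $\calftwo$; but over the trivial one-point $\sigma$-algebra every set-valued map with closed nonempty values is automatically measurable, so this requirement is vacuous here. All remaining hypotheses ($\cald$ neighborhood closed and $T$-translation invariant, $\Phi$ continuous and $\cald$-pullback asymptotically compact, existence of a closed $\cald$-pullback absorbing set in $\cald$) transfer unchanged. Proposition \ref{app6} then yields a unique periodic $\cald$-pullback attractor $\cala$, and after dropping the trivial dependence on $\omega^*$ and recalling from \eqref{shift1} that $\theta_{1,T}\tau=\tau+T$, the periodicity statement $\cala(\theta_{1,T}\tau,\omega)=\cala(\tau,\omega)$ reads $\cala(\tau+T)=\cala(\tau)$ for all $\tau\in\R$, as desired.

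To make the argument self-contained I would also unpack the mechanism, which in this deterministic setting is lighter than the proof of Theorem \ref{periodatt}. By Proposition \ref{app1} the cocycle $\Phi$ has a unique $\cald$-pullback attractor $\cala=\{\cala(\tau)\}_{\tau\in\R}$, and by \eqref{nonau5} (a consequence of Lemma \ref{attstr1} and \eqref{nonau3}) one has $\cala(\tau)=\{\xi(\tau):\xi\text{ is a }\cald\text{-complete solution of }\Phi\text{ in the sense of }\eqref{nonau2}\}$ for every $\tau\in\R$. To prove $\cala(\tau+T)\subseteq\cala(\tau)$, fix $y\in\cala(\tau+T)$, choose a $\cald$-complete solution $\xi$ with $\xi(\tau+T)=y$ and $\xi(t)\in D(t)$ for some $D\in\cald$, and put $\eta(\cdot)=\xi(\cdot+T)$. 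Using \eqref{nonau2} together with the $T$-periodicity $\Phi(t,s+T,\cdot)=\Phi(t,s,\cdot)$ one checks that $\eta$ is again a complete solution, while $\eta(t)=\xi(t+T)\in D(t+T)=D_T(t)$ with $D_T\in\cald$ by $T$-translation invariance; hence $\eta$ is $\cald$-complete and $y=\eta(\tau)\in\cala(\tau)$. The reverse inclusion is symmetric: for $y\in\cala(\tau)$ one takes a $\cald$-complete solution $\xi$ with $\xi(\tau)=y$, $\xi(t)\in D(t)$, and sets $\zeta(\cdot)=\xi(\cdot-T)$, noting $\zeta(t)\in D(t-T)=D_{-T}(t)$ and that $D_{-T}\in\cald$ by Lemma \ref{Tlation2}, so that $y=\zeta(\tau+T)\in\cala(\tau+T)$. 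Combining the two inclusions gives $\cala(\tau+T)=\cala(\tau)$.

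The substantive content is entirely in Theorem \ref{att} / Proposition \ref{app1} and Theorem \ref{periodatt}; there is essentially no obstacle specific to Proposition \ref{app2}. The two points to be careful about are (i) noting that the deterministic setting renders the measurability hypothesis of the general theorem vacuous, so that Proposition \ref{app6} genuinely applies, and (ii) using the $T$-translation invariance of $\cald$ in both directions, $D_T\in\cald$ for one inclusion and $D_{-T}\in\cald$ for the other, via Lemma \ref{Tlation2}. I would close with the same remark as after Proposition \ref{app8}: periodicity of $\cala$ in the parameter $\tau\in\R$ means precisely that, when the non-autonomous datum $f$ is $T$-periodic, $\cala$ descends to a well-defined attractor over $\Omega_f^T$ and thus a genuine periodic pullback attractor.
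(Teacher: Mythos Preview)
Your proposal is correct and matches the paper's approach exactly: the paper derives Proposition~\ref{app2} as a direct specialization of Proposition~\ref{app6} (hence of Theorem~\ref{periodatt}) to the case $\Omega_2=\{\omega^*\}$, just as you do in your first paragraph. Your additional unpacking via $\cald$-complete solutions in the second paragraph is simply the deterministic version of the proof of Theorem~\ref{periodatt}, so it is consistent with (and slightly more explicit than) what the paper provides.
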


 We now   discuss  the case  where  
     $\Omega_2 =\{\omega^*\}$  is a singleton
       and $\Omega_1 = \Omega_f$.
       Suppose $f: \R \to X$ is not a periodic function.
 Let  $\Phi$:  $ \R^+ \times \Omega_f \times \{\omega^*\} \times X$   $  \to X$  be    
  a continuous  cocycle on $X$
over $(\Omega_f,  \{\thonet\}_{t \in \R})$
and $(\{\omega^*\}, \calftwo, P, \{\thtwot\}_{t \in \R})$, 
where $\{\thonet\}_{t \in \R}$ is given by \eqref{shift2}.
For  convenience, we drop the dependence  of variables on $\Omega_2$ from now on.
Suppose  $\psi$:  
    $\R \times \Omega_f  \to X$  is  
      a complete orbit of $\Phi$ in the sense of Definition \ref{comporbit}, i.e.,  
       for every $\tau \in \R$, $t \ge 0$
      and
 $f^h \in \Omega_f$, 
\be
\label{nonau7}
 \Phi (t,   f^{\tau +h}, 
  \psi (\tau, f^h ) )
  = \psi (t + \tau, f^h).
\ee
Let $\xi: \R \to X$ satisfy, for all $t \ge 0$   and $f^h \in \Omega_f$,
\be
\label{nonau8}
\Phi(t, f^h) \xi (h) = \xi (t+ h).
\ee 
If,  in addition,  
 there exists $D=\{ D(f^t ): f^t \in \Omega_f
  \} \in \cald$
such that $\xi (t ) \in D(f^t)$ for all $ t \in \R$,
then, we say       $\xi$  is   a $\cald$-complete solution
of $\Phi$    in the sense of
\eqref{nonau8}.
If $\psi: \R \times \Omega_f \to X$
is a complete orbit of $\Phi$ in the sense of  Definition 
\ref{comporbit},  then by \eqref{nonau7}, 
  for every fixed $h \in \R$,
  $\xi^h  (\cdot) = \psi(\cdot -h, f^h)$
  is a complete orbit of $\Phi$ in the sense of  \eqref{nonau8}
  and   $\xi^h (h) = \psi (0, f^h)$.
 Similarly,  if    $\xi: \R \to X$   is a complete   solution
  of $\Phi$ under \eqref{nonau8}, then the mapping 
   $\psi:  (t, f^h)  \in  \R \times \Omega_f 
   \to  \psi (t ,  f^h) = \xi (t+h)$  is a complete 
   solution of $\Phi$  under Definition \ref{comporbit}
   and $\psi(0, f^h) = \xi (h)$. 
   Thus,  as a special    case of 
   Proposition  \ref{app7}   with  
   $\Omega_2 =\{\omega^*\}$,  we have the following result.

\begin{prop}
\label{app3}
 Suppose  
  $f: \R \to X$ is not a periodic function.
 Let $\cald$ be a  neighborhood closed  collection of some
   families of   nonempty subsets of
$X$ and $\Phi$  be a continuous   cocycle on $X$
over $(\Omega_f,  \{\thonet\}_{t \in \R})$
with  $\{\thonet\}_{t \in \R} $ given by \eqref{shift2}.
 Then
$\Phi$ has a  $\cald$-pullback
attractor $\cala$  in $\cald$
if and only if
$\Phi$ is $\cald$-pullback asymptotically
compact in $X$ and $\Phi$ has a  closed
      $\cald$-pullback absorbing set
  $K$ in $\cald$.
  The $\cald$-pullback
attractor $\cala = \{ \cala(f^h)\}_{f^h \in \Omega_f}$   is unique   and is given  by,
for each $f^h \in \Omega_f$,
$$
\cala (f^h )
=\Omega(K, f^h )
=\bigcup_{B \in \cald} \Omega(B, f^h )
$$
$$
 =\{\psi(0, f^h ): \psi \ \mbox{     is a  }\
  \cald{\rm -}  \mbox{complete  orbit  of }  \Phi  \  \mbox{ under Definition }
  \eqref{comporbit} \}
 $$
$$
   = \{ \xi (h): \xi \mbox{ is a }\  \cald{\rm -}
\mbox{complete  solution  of }
\Phi \mbox{  in the sense of } \ \eqref{nonau8} \}.
$$
  \end{prop}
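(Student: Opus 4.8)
The plan is to obtain Proposition~\ref{app3} as the special case of Proposition~\ref{app7} in which the second parametric space is a singleton. Concretely, I would take $\Omega_2=\{\omega^*\}$ with $\calftwo=\{\emptyset,\{\omega^*\}\}$, $P$ the unique probability measure on $(\{\omega^*\},\calftwo)$, and $\thtwot\omega^*=\omega^*$ for all $t\in\R$, exactly as set up at the start of this subsection. Adjoining the trivial $\omega^*$-coordinate turns the given cocycle $\Phi$ over $(\Omega_f,\{\thonet\}_{t\in\R})$ into a continuous cocycle over $(\Omega_f,\{\thonet\}_{t\in\R})$ and $(\Omega_2,\calftwo,P,\{\thtwot\}_{t\in\R})$ in the sense of Definition~\ref{ds1}, and likewise turns $\cald$ into a collection of families of nonempty subsets of $X$ parametrized by $\Omega_f\times\Omega_2$; since neighborhood closedness (Definition~\ref{defepsneigh1}) only involves $\varepsilon$-neighborhoods in $X$, $\cald$ remains neighborhood closed after this reparametrization. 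The hypothesis that $f$ is not periodic is exactly the one required by Proposition~\ref{app7}.

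The key observation is that over a one-point space every measurability requirement in Proposition~\ref{app7} is vacuous. Indeed the $P$-completion $\bar\calftwo$ of $\calftwo=\{\emptyset,\{\omega^*\}\}$ is again $\{\emptyset,\{\omega^*\}\}$, because $P(\{\omega^*\})=1$ forces the only $P$-null set to be $\emptyset$; hence any map with domain $\Omega_2$ is $\bar\calftwo$-measurable. In particular condition~(i) of Definition~\ref{ds1} and the measurability of $\omega^*\mapsto K(f^h,\omega^*)$ and $\omega^*\mapsto\cala(f^h,\omega^*)$ hold automatically, so the phrase ``closed measurable $\cald$-pullback absorbing set'' in Proposition~\ref{app7} collapses to ``closed $\cald$-pullback absorbing set,'' and clause~(i) of Definition~\ref{defatt} collapses to compactness of the fibers. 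After suppressing the now-redundant dependence of all objects on $\Omega_2$, the assertion of Proposition~\ref{app7} becomes word for word the assertion of Proposition~\ref{app3}, including uniqueness and the first two displayed characterizations $\cala(f^h)=\Omega(K,f^h)=\bigcup_{B\in\cald}\Omega(B,f^h)$ and $\cala(f^h)=\{\psi(0,f^h):\psi \mbox{ a }\cald\mbox{-complete orbit of }\Phi\}$.

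It then remains only to rewrite the last displayed formula in terms of complete solutions in the sense of \eqref{nonau8}, which I would do via the correspondence recorded in the paragraph preceding the statement: if $\xi\colon\R\to X$ satisfies \eqref{nonau8} with $\xi(t)\in D(f^t)$ for some $D\in\cald$, then $\psi(t,f^h):=\xi(t+h)$ is a $\cald$-complete orbit of $\Phi$ and $\psi(0,f^h)=\xi(h)$; conversely, for a $\cald$-complete orbit $\psi$ and fixed $h\in\R$, the map $\xi^h(\cdot):=\psi(\cdot-h,f^h)$ is a $\cald$-complete solution in the sense of \eqref{nonau8} with $\xi^h(h)=\psi(0,f^h)$. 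Hence the set of values $\psi(0,f^h)$ taken over $\cald$-complete orbits $\psi$ coincides with the set of values $\xi(h)$ taken over $\cald$-complete solutions $\xi$ in the sense of \eqref{nonau8}, which is the remaining formula in the statement.

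The argument is essentially bookkeeping. The only point I would verify with some care is that the probabilistic framework may legitimately be specialized in this way --- that $(\{\omega^*\},\calftwo,P,\{\thtwot\}_{t\in\R})$ is a genuine parametric dynamical system and that its $P$-completion is trivial --- so that no hypothesis of Proposition~\ref{app7} is silently strengthened or weakened in the passage to Proposition~\ref{app3}. I do not expect any real obstacle: Proposition~\ref{app7}, and behind it Theorem~\ref{att}, carries all the analytic content.
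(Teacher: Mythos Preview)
Your proposal is correct and follows exactly the approach the paper takes: the paper presents Proposition~\ref{app3} simply as ``a special case of Proposition~\ref{app7} with $\Omega_2=\{\omega^*\}$,'' relying on the orbit/solution correspondence set up in the preceding paragraph. Your write-up is more explicit than the paper's one-line justification---in particular your remarks on why measurability becomes vacuous over a one-point probability space---but the underlying argument is the same.
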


  Based on the translations of $f$, the attractors  of 
  deterministic equations  can also be studied by the
  skew semigroup method, see, e.g., \cite{che1}.  The idea of
  this approach  is to lift  a   cocycle   to a semigroup 
  defined in an extended space.  Let  $C_b(\R, X)$ be the
  space of  bounded continuous functions from $\R $ to $X$ with
  the supremum norm.   
  Given  $f \in C_b(\R, X)$,    let  $\Sigma$ be the closure of $\Omega_f$
  with respect to the topology of $C_b(\R, X)$. 
  If $\Sigma$ is compact, then the skew semigroup method can
  be  used
  to deduce the existence of non-autonomous attractors under
  certain circumstances, see  \cite{che1} for instance.
  Our approach with $\Omega_1 =\Omega_f$  is  different from the
  skew semigroup method in the sense that we consider
  the  set $\Omega_f$   only as a parametric space;
  neither the boundedness of $f$, nor the precompactness of $\Omega_f$,  is needed.    This is why Proposition \ref{app3} 
  could  be applied to   an   unbounded $f$ with 
  even exponentially  growing rate as $t \to \pm  \infty$.

 Note that   Propositions 
\ref{app1} and \ref{app3}
are essentially the same as long as $f$ is not a periodic  function.
This follows  from   the  discussions 
presented  in Section \ref{sec31} by setting
$\Omega_2 =\{\omega^*\}$.  
For  periodic  $f$, we have the following result  from 
Proposition \ref{app8}.

  \begin{prop}
\label{app20}
Suppose  
 $f: \R \to X$   is   periodic     with   minimal period $T>0$.
Let $\cald$ be a  neighborhood closed  collection of some 
 families of   nonempty subsets of
$X$ and $\Phi$  be a continuous   cocycle on $X$
over $(\Omega_f^T,  \{\thonet\}_{t \in \R})$, 
where $\{\thonet\}_{t \in \R}$ is defined by \eqref{shift3}.
  Then
$\Phi$ has a  $\cald$-pullback
attractor $\cala$  in $\cald$
if and only if
$\Phi$ is $\cald$-pullback asymptotically
compact in $X$ and $\Phi$ has a  closed
       $\cald$-pullback absorbing set
  $K$ in $\cald$.
  The $\cald$-pullback
attractor $\cala $  is unique   and is given  by,
for each $f^h \in \Omega_f^T$,
$$
\cala (f^h )
=\Omega(K, f^h )
=\bigcup_{B \in \cald} \Omega(B, f^h )
$$
$$
 =\{\psi(0, f^h ): \psi \ \mbox{     is a  }\
  \cald{\rm -}  \mbox{complete  orbit  of }  \Phi
   \  \mbox{ under Definition }
  \eqref{comporbit}
  \}
 $$
$$
   = \{ \xi (h): \xi \mbox{ is a }\  \cald{\rm -}
\mbox{complete  solution  of }
\Phi \mbox{  in the sense of } \ \eqref{nonau8} \}.
$$
  \end{prop}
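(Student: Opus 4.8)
The plan is to obtain Proposition~\ref{app20} as the specialization of Proposition~\ref{app8} to the case in which the random parametric space is a singleton. Take $\Omega_2 = \{\omega^*\}$ with $\calftwo = \{\emptyset, \{\omega^*\}\}$ and $P(\{\omega^*\}) = 1$, and put $\thtwot \omega^* = \omega^*$ for all $t \in \R$; then $(\{\omega^*\}, \calftwo, P, \{\thtwot\}_{t\in\R})$ is a parametric dynamical system. Any continuous cocycle $\Phi$ on $X$ over $(\Omega_f^T, \{\thonet\}_{t\in\R})$, with $\{\thonet\}_{t\in\R}$ given by \eqref{shift3}, is then identified with a continuous cocycle over $(\Omega_f^T, \{\thonet\}_{t\in\R})$ and $(\{\omega^*\}, \calftwo, P, \{\thtwot\}_{t\in\R})$ in the sense of Definition~\ref{ds1} by declaring every object constant in the $\omega_2$-variable; likewise a collection $\cald$ of families of nonempty subsets of $X$ parametrized by $\Omega_f^T$ becomes one parametrized by $\Omega_f^T \times \{\omega^*\}$, and neighborhood closedness is preserved under this identification. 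Since $\calftwo$ is the trivial $\sigma$-algebra and $P$ the trivial measure, the $P$-completion of $\calftwo$ is $\calftwo$ itself and every set-valued map with domain $\{\omega^*\}$ is automatically $\calftwo$-measurable; hence the measurability clauses in Definition~\ref{defatt}(i) and in the hypotheses of Proposition~\ref{app8} are vacuous, and ``closed measurable $\cald$-pullback absorbing set'' collapses to ``closed $\cald$-pullback absorbing set'', exactly as in the statement of Proposition~\ref{app20}.

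Next I would apply Proposition~\ref{app8} verbatim to this cocycle and then suppress the dependence on $\Omega_2$, writing $\cala(f^h)$ for $\cala(f^h, \omega^*)$ and so on. Proposition~\ref{app8} then asserts that $\Phi$ admits a $\cald$-pullback attractor $\cala$ in $\cald$ if and only if $\Phi$ is $\cald$-pullback asymptotically compact in $X$ and has a closed $\cald$-pullback absorbing set $K$ in $\cald$, and that $\cala$ is unique with
$$
\cala(f^h) = \Omega(K, f^h) = \bigcup_{B\in\cald}\Omega(B, f^h) = \{\psi(0, f^h):\ \psi \mbox{ is a } \cald{\rm -}\mbox{complete orbit of } \Phi\}
$$
together with the characterization via $\cald$-complete quasi-solutions. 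To finish I would check the translation of the definitions: with $\omega_2$ suppressed, Definition~\ref{comporbit} for a map $\psi: \R\times\Omega_f^T \to X$ becomes precisely \eqref{nonau7}, and the relation \eqref{nonau28} defining a complete quasi-solution becomes \eqref{nonau8}; hence a $\cald$-complete quasi-solution in the sense of \eqref{nonau28} is the same object as a $\cald$-complete solution in the sense of \eqref{nonau8}, and the third characterization in Proposition~\ref{app20} follows as well.

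There is no serious obstacle here: the analytic content resides entirely in Proposition~\ref{app8}, hence ultimately in Theorem~\ref{att}. The only point demanding care is the bookkeeping — verifying that each notion ($\cald$-pullback asymptotic compactness, closed $\cald$-pullback absorbing set, $\cald$-pullback attractor, $\cald$-complete orbit, and $\cald$-complete quasi-solution) specializes to the correct $\omega_2$-free form when $\Omega_2$ is a singleton, and that the passage from \eqref{nonau28} to \eqref{nonau8} is compatible with the translation group \eqref{shift3} on $\Omega_f^T$. Finally, as in the remark following Proposition~\ref{app8}, the attractor $\cala = \{\cala(f^h):\ h\in[0,T)\}$ extends to a $T$-periodic attractor defined on all of $\Omega_f$ by setting $\cala(f^h) = \cala(f^\tau)$ whenever $h = mT + \tau$ with $m$ an integer and $\tau\in[0,T)$.
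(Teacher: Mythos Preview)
Your proposal is correct and follows essentially the same approach as the paper: the paper simply states that Proposition~\ref{app20} is the special case of Proposition~\ref{app8} obtained by taking $\Omega_2 = \{\omega^*\}$ as a singleton and dropping the dependence on $\Omega_2$, exactly as you describe. Your added care about the trivialization of measurability and the identification of \eqref{nonau28} with \eqref{nonau8} makes explicit what the paper leaves implicit.
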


\section{Pullback Attractors for Reaction-Diffusion Equations}
\setcounter{equation}{0}

As   an application of our main results, in this section, we study pullback
attractors   for Reaction-Diffusion  equations defined
on $\R^n$    with both non-autonomous deterministic and stochastic
forcing  terms. 
We  first  define  a continuous cocycle  for the equation in $\ltwo$, and then
prove the pullback asymptotic compactness of solutions as well as
the existence of  pullback  absorbing sets. We  will also discuss
periodic pullback attractors of the equation when the deterministic forcing terms
are periodic   functions in $\ltwo$.

\subsection{Cocycles  for Reaction-Diffusion Equations on $\R^n$}
\setcounter{equation}{0}

 Given   $\tau \in\R$ and $t > \tau$,  consider      the
 following non-autonomous  Reaction-Diffusion  equation
 defined for  $x \in \R^n$,
\be
  \label{41}
  du +  (\lambda u - \Delta u) dt    =  f(x, u) dt  +  g(x, t) dt + h(x) d \omega,
 \ee
 with the initial data
 \be\label{42}
 u(x, \tau) = u_\tau (x),   \quad x\in \R^n,
 \ee
where   $\lambda$ is a  positive constant,
$g \in L^2_{loc}(\R, \ltwo)$,  $h \in H^2(\R^n)\bigcap W^{2,p} (\R^n)$
for some $p \ge 2$,
$\omega$  is a  two-sided real-valued Wiener process on a probability space.
 The nonlinearity    $f$   is 
  a smooth
function  that satisfies, for some positive constants $\alpha_1$,
$\alpha_2$ and $\alpha_3$,
\begin{equation}
\label{f1}
f(x, s) s \le -  \alpha_1 |s|^p + \psi_1(x), \quad \forall \ x \in \R^n, \ \ \forall \ s \in \R, 
\end{equation}
\begin{equation}
\label{f2}
|f(x, s) |   \le \alpha_2 |s|^{p-1} + \psi_2 (x),
\quad \forall \ x \in \R^n, \ \ \forall \ s \in \R, 
\end{equation}
\begin{equation}
\label{f3}
{\frac {\partial f}{\partial s}} (x, s)   \le \alpha_3,
\quad \forall \ x \in \R^n, \ \ \forall \ s \in \R, 
\end{equation}
\begin{equation}
\label{f4}
| {\frac {\partial f}{\partial x}} (x, s) | \le  \psi_3(x),
\quad \forall \ x \in \R^n, \ \ \forall \ s \in \R, 
\end{equation}
where 
$\psi_1 \in L^1(R^n) \cap L^\infty(R^n)$  and $\psi_2, \psi_3 \in L^2(R^n)$.

To describe   the   probability space  that   will be   used
in this paper,  we write  
$$
\Omega = \{ \omega   \in C(\R, \R ): \ \omega(0) =  0 \}.
$$
Let $\calf$  be
 the Borel $\sigma$-algebra induced by the
compact-open topology of $\Omega$, and $P$
be  the corresponding Wiener
measure on $(\Omega, \calf)$.   
 Define  a group  $\{\thtwot \}_{t \in \R}$  acting on  
 $(\Omega, \calf, P)$
   by
\be\label{rdshift1}
 \thtwot \omega (\cdot) = \omega (\cdot +t) - \omega (t), \quad  \omega \in \Omega, \ \ t \in \R .
\ee
Then $(\Omega, \mathcal{F}, P, \{\thtwot\}_{t\in \R})$ is a  parametric 
dynamical  system.

 Let $\{\thonet\}_{t \in \R}$ be the group acting on $\R$  given  by 
 \eqref{shift1}. 
 We next   define  a continuous  cocycle  
   for  equation \eqref{41}
  over $ (\R, \{\thonet\}_{t \in \R} )$    and  $(\Omega, \mathcal{F},
P, \{\thtwot \}_{t\in \R})$.  This can be done by
first transferring  the stochastic    equation  into a  corresponding non-autonomous deterministic one.  
Given $\omega  \in \Omega$,   denote by
\be\label{zomega}
z ( \omega)=   -\lambda \int^0_{-\infty} e^{\lambda \tau}    \omega  (\tau) d \tau.
\ee
Then it is easy   to check  that   the random variable
$z$  given by
\eqref{zomega}  is a stationary  solution of the    one-dimensional Ornstein-Uhlenbeck
equation:
$$
dz + \lambda z dt = d w.
$$
In other words,   we have
\be
\label{z1}
dz(\thtwot \omega )  + \lambda z(\thtwot \omega)  dt = d w.
\ee
It is known  that  there exists a $\thtwot$-invariant set $\tilde{\Omega}\subseteq \Omega$
of full $P$ measure  such that
  $z(\thtwot\omega)$  is
 continuous in $t$ for every $\omega \in \tilde{\Omega}$,
and
the random variable $|z(\omega)|$ is tempered
(see, e.g., \cite{arn1, cra1, cra2}).
Hereafter,   we will not distinguish 
$\tilde{\Omega}$  and $\Omega$, and   write 
$\tilde{\Omega}$ as 
  $\Omega$.

Formally, if $u$  solves  equation \eqref{41}, 
then    the variable  
  $v(t) = u(t) - hz(\thtwot \omega)$   should satisfy 
\be
\label{v1}
{\frac {\partial v}{\partial t}} + \lambda v -\Delta v =
 f(x,  v + h z(\thtwot \omega) ) + g(x,t)   +  z (\thtwot  \omega )\Delta h,
\ee 
for $t > \tau$ with $\tau \in \R$ and $x \in \R^n$.
Since   \eqref{v1}  is a deterministic  equation, 
following the arguments of \cite{tem1},  one 
can show    that  under    \eqref{f1}-\eqref{f4},
for each  $\omega \in \Omega$,   $\tau \in \R$    and $v_\tau \in \ltwo$, 
equation \eqref{v1} 
has a unique  solution $v(\cdot,\tau,  \omega, v_\tau)
  \in C([\tau, \infty), \ltwo) \bigcap L^2((\tau, \tau+T); \hone)$
   with $v(\tau, \tau,  \omega, v_\tau) = v_\tau$ for every $T>0$. 
   Furthermore,   for each  $t \ge \tau$,
   $v (t, \tau, \omega, v_\tau)$ is  ($\calf, \calb (\ltwo))$-measurable in $\omega$
   $\in \Omega$ and  continuous  in  $v_\tau $
 with respect to the norm of  $\ltwo$ . 
Let $u(t, \tau,  \omega, u_\tau) = v(t, \tau,  \omega, v_\tau) + hz(\thtwot \omega)$
with $u_\tau = v_\tau + h z(\theta_{2, \tau} \omega)$. Then
we find that
$u$  is continuous
in both $t \ge \tau$   and $u_\tau \in \ltwo$     and is 
  ($\calf, \calb (\ltwo))$-measurable in $\omega$
   $\in \Omega$.   In addition, it 
   follows  from
\eqref{v1}   that $u$   is  a solution of problem \eqref{41}-\eqref{42}.
We now   define a cocycle $\Phi: \R^+ \times \R \times \Omega \times \ltwo$
$\to \ltwo$ for    the stochastic  problem
\eqref{41}-\eqref{42}.
Given $t \in \R^+$,  $\tau \in \R$, $\omega \in \Omega$  and $u_\tau \in \ltwo$,
set
 \be \label{rdphi}
 \Phi (t, \tau,  \omega, u_\tau) =  u (t+\tau,  \tau, \theta_{2, -\tau} \omega, u_\tau) =
v(t+\tau, \tau,  \theta_{2, -\tau} \omega,  v_\tau)
+ hz(\thtwot \omega ),  
\ee
where $v_\tau = u_\tau - h z( \omega)$. 
By  the properties of $u$, it is easy   to check   that $\Phi$ 
is a continuous   cocycle on $\ltwo$ over $(\R,  \{\thonet\}_{t \in \R})$
and 
$(\Omega, \mathcal{F},
P, \{\thtwot \}_{t\in \R})$, where 
$\{\thonet\}_{t \in \R}$ and
$\{\thtwot \}_{t\in \R}$ are given by
\eqref{shift1} and \eqref{rdshift1}, respectively.
In other words,  the  mapping  $\Phi$    given
by \eqref{rdphi}  satisfies   all   conditions  (i)-(iv) in Definition \ref{ds1}.
   In  what   follows,  we establish   uniform estimates
  for the solutions of problem \eqref{41}-\eqref{42} and prove the existence
  of  pullback  attractors  for $\phi$ in $\ltwo$.
  To this end, we first need   to specify   a collection $\cald$ 
  of families of subsets  of $\ltwo$.

 Let  $B $  be a  bounded nonempty  subset of $\ltwo$, and denote   by
   $  \| B\| = \sup\limits_{\varphi \in B}
   \| \varphi\|_{\ltwo }$. 
Suppose 
   $D =\{ D(\tau, \omega): \tau \in \R, \omega \in \Omega \}$   is   a family of
  bounded nonempty   subsets of $\ltwo $  satisfying, 
  for every $\tau \in \R$   and $\omega \in \Omega$, 
 \be
 \label{attdom1}
 \lim_{s \to  - \infty} e^{   \lambda  s} \| D( \tau + s, \theta_{2, s} \omega ) \|^2 =0,
 \ee 
 where $\lambda$ is  as in \eqref{41}.
Denote   by $\cald_\lambda$     the  collection of all families of
bounded nonempty  subsets of $\ltwo$
which  fulfill condition \eqref{attdom1}, i.e.,
 \be
 \label{Dlambda}
\cald_\lambda = \{ 
   D =\{ D(\tau, \omega): \tau \in \R, \omega \in \Omega \}: \ 
 D  \ \mbox{satisfies} \  \eqref{attdom1} \} .
\ee
It is evident that
$\cald_\lambda$ is  neighborhood closed.
 The following condition   will be needed  when deriving 
   uniform estimates of solutions:
 \be
 \label{gcond1}
 \int_{-\infty}^\tau e^{\lambda  s } \| g(\cdot, s)\|^2_\ltwo d s
<  \infty, \quad \forall \ \tau \in \R,
 \ee
 which implies   that
 \be
 \label{gcond2}
\lim_{k \to \infty}  \int_{-\infty}^\tau  \int_{|x| \ge k}  e^{\lambda s}   |g(x,  s) |^2 dx d s =0,
\quad \forall \  \tau  \in \R.
 \ee

 Notice that condition  \eqref{gcond1} 
does  not require that    $g$ be bounded in $L^2(\R^n)$
when $s \to \pm \infty$.  Particularly,
this  condition   has  no any restriction
 on  $g(\cdot, s)$ when $s \to +\infty$.

\subsection{Uniform Estimates of Solutions}

      In this section, we
 derive uniform estimates of  solutions  of  problem \eqref{41}-\eqref{42} 
 which are needed for     proving  the existence of
$\cald_\lambda$-pullback  absorbing sets
 and the   $\cald_\lambda$-pullback  asymptotic compactness of  the cocycle  $\Phi$.
 Especially,  we will show that  the tails of solutions
  are uniformly small   for large space  and time  variables.
  The estimates of solutions  in $\ltwo$    are provided below.

\begin{lem}
\label{lemrde1}
 Suppose  \eqref{f1}-\eqref{f4}  and \eqref{gcond1} hold.
Then for every $\tau \in \R$, $\omega \in \Omega$   and $D=\{D(\tau, \omega)
: \tau \in \R,  \omega \in \Omega\}  \in \cald_\lambda$,
 there exists  $T=T(\tau, \omega,  D)>0$ such that for all $t \ge T$, the solution
 $v$ of equation \eqref{v1}  with $\omega$ replaced by
 $\theta_{2, -\tau} \omega$  satisfies
$$
\| v(\tau, \tau -t,  \theta_{2, -\tau} \omega, v_{\tau -t}  ) \|^2 
 \le M+  M  e^{- \lambda  \tau}
\int_{-\infty}^\tau
e^{\lambda s}  
     \| g(\cdot, s ) \|^2    d s
+M \int_{-\infty}^0 e^{\lambda s} |z(\theta_{2, s} \omega) |^p ds ,
$$
and
 $$
  \int_{\tau -t}^\tau e^{\lambda s}
\left (\| v(s, \tau -t,\theta_{2, -\tau} \omega, v_{\tau -t} ) \|^2_{H^1(\R^n)} 
+
\|   v(s, \tau -t, \theta_{2, -\tau} \omega, v_{\tau -t} ) + h z (\theta_{2, s-\tau} \omega ) \|^p_p \right ) ds
$$
$$
\le Me^{\lambda \tau} +  M 
\int_{-\infty}^\tau
e^{\lambda s}  
     \| g(\cdot, s ) \|^2    d s
+Me^{\lambda \tau}  \int_{-\infty}^0 e^{\lambda s} |z(\theta_{2, s} \omega) |^p ds ,
$$
 where $v_{\tau -t}\in D(\tau -t, \theta_{2, -t} \omega)$ and
  $M$ is a  positive constant depending   on      $\lambda$, but independent of $\tau$, $\omega$   and $D$.
\end{lem}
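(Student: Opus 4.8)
The plan is to carry out the standard $\ltwo$ energy estimate for equation \eqref{v1}, in which $\omega$ is replaced by $\theta_{2,-\tau}\omega$ so that the Ornstein--Uhlenbeck process entering it at time $s$ is $z(\theta_{2,s-\tau}\omega)$, and then to run a pullback Gronwall argument with the exponential weight $e^{\lambda s}$. Writing $v(s)=v(s,\tau-t,\theta_{2,-\tau}\omega,v_{\tau-t})$, taking the inner product of \eqref{v1} with $v$ in $\ltwo$ and integrating the $-\Delta v$ term by parts gives
\be
\tfrac12\frac{d}{ds}\|v(s)\|^2+\lambda\|v(s)\|^2+\|\nabla v(s)\|^2
=\ii f\bigl(x,v(s)+h z(\theta_{2,s-\tau}\omega)\bigr)v(s)\,dx
+(g(\cdot,s),v(s))+z(\theta_{2,s-\tau}\omega)\ii \Delta h\, v(s)\,dx .
\ee
In the nonlinear term I would split $v=(v+h z(\theta_{2,s-\tau}\omega))-h z(\theta_{2,s-\tau}\omega)$, apply \eqref{f1} to the part pairing with $v+hz$, and apply \eqref{f2} together with Young's inequality (conjugate exponents $p/(p-1)$ and $p$) to the cross term, using $h\in L^p(\R^n)\cap L^2(\R^n)$, $\psi_1\in L^1(\R^n)$, $\psi_2\in L^2(\R^n)$; this bounds it by $-\tfrac{\alpha_1}{2}\|v(s)+h z(\theta_{2,s-\tau}\omega)\|_p^p+c(1+|z(\theta_{2,s-\tau}\omega)|^p)$, with \emph{no} contribution proportional to $\|v\|^2$. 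The remaining two terms I would estimate by $|(g(\cdot,s),v)|\le\varepsilon\|v\|^2+c_\varepsilon\|g(\cdot,s)\|^2$ and $|z(\theta_{2,s-\tau}\omega)|\,|\ii\Delta h\, v|\le\varepsilon\|v\|^2+c_\varepsilon(1+|z(\theta_{2,s-\tau}\omega)|^p)$, using $\Delta h\in L^2(\R^n)$ and $p\ge2$. Taking each $\varepsilon=\lambda/8$ and multiplying by $2$ I obtain
\be
\frac{d}{ds}\|v(s)\|^2+\tfrac{3}{2}\lambda\|v(s)\|^2+2\|\nabla v(s)\|^2+\alpha_1\|v(s)+h z(\theta_{2,s-\tau}\omega)\|_p^p
\le c\bigl(1+|z(\theta_{2,s-\tau}\omega)|^p+\|g(\cdot,s)\|^2\bigr),
\ee
with $c$ depending only on $\lambda,\alpha_1,\|\psi_1\|_1,\|\psi_2\|,\|h\|_{H^2},\|h\|_p$.

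Next I would multiply by $e^{\lambda s}$ and integrate over $[\tau-t,\tau]$; since $\tfrac32\lambda-\lambda=\tfrac12\lambda>0$, a coercive quantity $\tfrac12\lambda\|v\|^2+2\|\nabla v\|^2\ge c_0\|v(s)\|_{\hone}^2$ survives inside the integral, which is exactly what the second estimate of the lemma needs. This produces
\be
e^{\lambda\tau}\|v(\tau)\|^2+\int_{\tau-t}^{\tau}e^{\lambda s}\Bigl(c_0\|v(s)\|_{\hone}^2+\alpha_1\|v(s)+h z(\theta_{2,s-\tau}\omega)\|_p^p\Bigr)\,ds
\le e^{\lambda(\tau-t)}\|v_{\tau-t}\|^2+c\int_{\tau-t}^{\tau}e^{\lambda s}\bigl(1+|z(\theta_{2,s-\tau}\omega)|^p+\|g(\cdot,s)\|^2\bigr)\,ds .
\ee
On the right I would bound the constant part by $(c/\lambda)e^{\lambda\tau}$; for the $z$-part I substitute $\sigma=s-\tau$ to get $c\,e^{\lambda\tau}\int_{-t}^{0}e^{\lambda\sigma}|z(\theta_{2,\sigma}\omega)|^p\,d\sigma\le c\,e^{\lambda\tau}\int_{-\infty}^{0}e^{\lambda\sigma}|z(\theta_{2,\sigma}\omega)|^p\,d\sigma$, which is finite because $|z(\omega)|$ is tempered; and the $g$-part is $\le c\int_{-\infty}^{\tau}e^{\lambda s}\|g(\cdot,s)\|^2\,ds<\infty$ by \eqref{gcond1}. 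For the initial term, since $v_{\tau-t}\in D(\tau-t,\theta_{2,-t}\omega)$ and $D\in\cald_\lambda$, applying \eqref{attdom1} with $s$ replaced by $-t$ shows $e^{-\lambda t}\|D(\tau-t,\theta_{2,-t}\omega)\|^2\to0$ as $t\to\infty$, so there is $T=T(\tau,\omega,D)>0$ with $e^{\lambda(\tau-t)}\|v_{\tau-t}\|^2\le e^{\lambda\tau}$ for all $t\ge T$; this is the source of the time $T$ in the statement.

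Combining these bounds, dividing by $e^{\lambda\tau}$ and discarding the (nonnegative) dissipation integral yields the first inequality, while instead retaining the dissipation integral — and using $c_0\|v\|_{\hone}^2+\alpha_1\|v+hz\|_p^p\ge c_1(\|v\|_{\hone}^2+\|v+hz\|_p^p)$ — yields the second, with $M$ depending only on $\lambda$ (and on the fixed data $\alpha_1,\psi_1,\psi_2,h$), not on $\tau,\omega,D$. I expect the only point requiring genuine care to be the bookkeeping in the energy inequality: one must keep a coefficient strictly larger than $\lambda$ in front of $\|v\|^2$ (here achieved by choosing the Young parameters small), so that after the weight $e^{\lambda s}$ is absorbed a positive multiple of $e^{\lambda s}\|v\|^2$ — and hence the full $\hone$-norm, not merely $\|\nabla v\|^2$ — remains in the weighted dissipation integral. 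Everything else is a routine application of \eqref{f1}, \eqref{f2}, Young's inequality, the temperedness of $|z|$, and \eqref{gcond1}; as usual the formal computation is justified by performing it on Galerkin approximations and passing to the limit.
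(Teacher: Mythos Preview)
Your proposal is correct and follows essentially the same route as the paper's proof: both take the $\ltwo$ inner product of \eqref{v1} with $v$, split $v=(v+hz)-hz$ in the nonlinear term and use \eqref{f1}--\eqref{f2} with Young's inequality to get $-\tfrac{\alpha_1}{2}\|v+hz\|_p^p+c(1+|z|^p)$, absorb the lower-order terms so that a coefficient $\tfrac32\lambda$ remains in front of $\|v\|^2$, then multiply by $e^{\lambda s}$, integrate over $(\tau-t,\tau)$, and invoke \eqref{attdom1} for the initial data. The only cosmetic differences are that the paper first works with general $\omega$ and substitutes $\theta_{2,-\tau}\omega$ at the end, and it integrates the $z\Delta h$ term by parts (absorbing $\tfrac12\|\nabla v\|^2$) rather than bounding it directly against $\|v\|^2$ as you do; neither affects the argument.
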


\begin{proof}  
It follows   from   \eqref{v1} that
\be
\label{p41_1}
{\frac 12} {\frac d{dt}} \|v\|^2 + \lambda \| v\|^2 + \| \nabla v \|^2 =
\int_{\R^n} f(x, v+  h \zto )  \ v  dx  + (g, v) + \zto (  \Delta  h, v).
 \ee
 By   \eqref{f1}  and \eqref{f2},    the first   term on the right-hand side 
 of \eqref{p41_1}   satisfies 
$$
\int_{\R^n}  f(x, v+ h \zto  )  \  v  dx
$$
$$
=\int_{\R^n}  f(x, v+ h \zto )   \  ( v +  h\zto )  ) dx
- \zto \int_{\R^n}  f(x, v+ h \zto ) \  h(x)   dx
$$
$$
\le -\alpha_1 \ii |  v+ h \zto  |^p dx + \ii \psi_1(x) dx
$$
$$
 + \alpha_2  \ii
|  v+ h \zto |^{p-1} \; |h \zto | dx + \ii |\psi_2|\;  |h\zto| dx
$$
\be
\label{p41_2}
\le - {\frac 12} \alpha_1 \|   v+ h \zto  \|^p_p
+ c_1 (1+  |\zto |^p    ) , 
\ee
where we have used  Young's  inequality    to obtain \eqref{p41_2}.
Note that  the    last two  terms on the
right-hand side of \eqref{p41_1} are  bounded by 
\be
\label{p41_3}
\| g \| \| v \| + \|\zto\nabla h \| \| \nabla v \|
\le
{\frac 14} \lambda \| v \|^2 + {\frac 1{\lambda}} \| g \|^2
+{\frac 12} \| \zto \nabla h  \|^2
+{\frac 12} \| \nabla v \|^2.
\ee
By    \eqref{p41_1}-\eqref{p41_3} we find  that
\be
\label{p41_4}
 {\frac d{dt}} \|v\|^2 + {\frac 32} \lambda \| v\|^2 + \| \nabla v \|^2
 + \alpha_1 \| v+ h \zto \|^p_p
 \le {\frac 2\lambda} \| g \|^2
+  c_2 (1+   |\zto  |^p ).
\ee
Multiplying  \eqref{p41_4} by $e^{\lambda t}$ and then
integrating the resulting inequality on $(\tau -t, \tau)$ with $t \ge  0$,
we get  that  for every $\omega \in \Omega$, 
$$
\| v(\tau, \tau -t, \omega, v_{\tau -t} ) \|^2
+ \int_{\tau -t}^\tau e^{\lambda (s-\tau)}
\| \nabla v(s, \tau -t, \omega, v_{\tau -t} ) \|^2 ds
$$
$$
+ {\frac 12} \lambda
\int_{\tau -t}^\tau e^{\lambda (s-\tau)}
\|   v(s, \tau -t, \omega, v_{\tau -t} ) \|^2 ds
+\alpha_1 
 \int_{\tau -t}^\tau e^{\lambda (s-\tau)}
\|   v(s, \tau -t, \omega, v_{\tau -t} ) + h z (\theta_{2, s} \omega ) \|^p_p ds
$$
$$
\le e^{-\lambda t} \| v_{\tau -t} \|^2
+{\frac 2\lambda} e^{-\lambda \tau} \int_{-\infty}^\tau
e^{\lambda s} \| g(\cdot, s) \|^2   ds
+c_3 +c_3  \int_{\tau -t}^\tau e^{\lambda (s-\tau)}|z(\theta_{2,s} \omega)|^p ds.
$$
Replacing $\omega$ by $\theta_{2, -\tau}  \omega$, we find   that
$$
\| v(\tau, \tau -t, \theta_{2, -\tau} \omega, v_{\tau -t} ) \|^2
+ \int_{\tau -t}^\tau e^{\lambda (s-\tau)}
\| \nabla v(s, \tau -t,\theta_{2, -\tau} \omega, v_{\tau -t} ) \|^2 ds
$$
$$
+ {\frac 12}\lambda
\int_{\tau -t}^\tau e^{\lambda (s-\tau)}
\|   v(s, \tau -t,\theta_{2, -\tau} \omega, v_{\tau -t} ) \|^2 
+\alpha_1 
 \int_{\tau -t}^\tau e^{\lambda (s-\tau)}
\|   v(s, \tau -t, \theta_{2, -\tau} \omega, v_{\tau -t} ) + h z (\theta_{2, s-\tau} \omega ) \|^p_p
$$
$$
\le e^{-\lambda t} \| v_{\tau -t} \|^2
+{\frac 2\lambda} e^{-\lambda \tau} \int_{-\infty}^\tau
e^{\lambda s} \| g(\cdot, s) \|^2   ds
+c_3 +c_3  \int_{\tau -t}^\tau e^{\lambda (s-\tau)}|z(\theta_{2,s-\tau} \omega)|^p ds
$$
\be\label{p41_5}
\le e^{-\lambda t} \| v_{\tau -t} \|^2
+{\frac 2\lambda} e^{-\lambda \tau} \int_{-\infty}^\tau
e^{\lambda s} \| g(\cdot, s) \|^2   ds
+c_3 +c_3  \int_{-\infty}^0  e^{\lambda s}|z(\theta_{2,s} \omega)|^p ds.
\ee
Since
$v_{\tau -t} \in D(\tau -t, \theta_{2, -t} \omega)$ we  see that
$$ \limsup_{t \to \infty}
e^{-\lambda t} \| v_{\tau -t} \|^2
\le \limsup_{t \to \infty}  e^{-\lambda t} \|  D(\tau -t, \theta_{2, -t} \omega)   \|^2 =0.
$$
Therefore, there exists $T=T(\tau, \omega, D)>0$   such that  for all
$t \ge T$,  $e^{-\lambda t} \| v_{\tau -t} \|^2 \le 1$, which  along   with
\eqref{p41_5} completes     the proof.\end{proof}

As a  consequence of Lemma \ref{lemrde1}, we have the following
inequality  which is  useful for deriving the uniform estimates of solutions in $\hone$.

\begin{lem}
\label{lemrde2}
 Suppose  \eqref{f1}-\eqref{f4}  and \eqref{gcond1} hold.
Then for every $\tau \in \R$, $\omega \in \Omega$   and $D=\{D(\tau, \omega)
: \tau \in \R,  \omega \in \Omega\}  \in \cald_\lambda$,
 there exists  $T=T(\tau, \omega,  D) \ge 1$ such that for all $t \ge T$, the solution
 $v$ of equation \eqref{v1}  with $\omega$ replaced by
 $\theta_{2, -\tau} \omega$  satisfies
 $$
  \int_{\tau -1}^\tau  
\left (\| \nabla v(s, \tau -t,\theta_{2, -\tau} \omega, v_{\tau -t} ) \|^2 
+
\|   v(s, \tau -t, \theta_{2, -\tau} \omega, v_{\tau -t} ) + h z (\theta_{2, s-\tau} \omega ) \|^p_p \right ) ds
$$
$$
\le M  +  M e^{-\lambda \tau}
\int_{-\infty}^\tau
e^{\lambda s}  
     \| g(\cdot, s ) \|^2    d s
+M   \int_{-\infty}^0 e^{\lambda s} |z(\theta_{2, s} \omega) |^p ds ,
$$
 where $v_{\tau -t}\in D(\tau -t, \theta_{2, -t} \omega)$ and
  $M$ is a  positive constant depending   on      $\lambda$, but independent of $\tau$, $\omega$   and $D$.
\end{lem}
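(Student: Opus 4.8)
The plan is to obtain Lemma \ref{lemrde2} directly from Lemma \ref{lemrde1}: the desired inequality is nothing but the second estimate of Lemma \ref{lemrde1} localized to the unit interval $[\tau-1,\tau]$ and stripped of its exponential weight, at the cost of an extra $\tau$-independent factor. So no new energy estimate is needed; everything reduces to elementary manipulations of the integral already controlled.

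First I would fix $\tau\in\R$, $\omega\in\Omega$ and $D\in\cald_\lambda$, let $T_1=T_1(\tau,\omega,D)>0$ be the number supplied by Lemma \ref{lemrde1}, and set $T=\max\{T_1,1\}\ge 1$. For every $t\ge T$ the second estimate of Lemma \ref{lemrde1} is then available, and moreover $\tau-t\le\tau-1$, so that $[\tau-1,\tau]\subseteq[\tau-t,\tau]$. Abbreviating $v(s)=v(s,\tau-t,\theta_{2,-\tau}\omega,v_{\tau-t})$ and using $\|\nabla v(s)\|^2\le\|v(s)\|^2_{\hone}$ together with the bound $e^{\lambda s}\ge e^{\lambda(\tau-1)}=e^{-\lambda}e^{\lambda\tau}$, valid for all $s\in[\tau-1,\tau]$, I would write
$$
\int_{\tau-1}^{\tau}\Bigl(\|\nabla v(s)\|^2+\|v(s)+hz(\theta_{2,s-\tau}\omega)\|^p_p\Bigr)\,ds
\le e^{\lambda}e^{-\lambda\tau}\int_{\tau-1}^{\tau}e^{\lambda s}\Bigl(\|v(s)\|^2_{\hone}+\|v(s)+hz(\theta_{2,s-\tau}\omega)\|^p_p\Bigr)\,ds ,
$$
and then, since the integrand is nonnegative and $[\tau-1,\tau]\subseteq[\tau-t,\tau]$, enlarge the domain of integration to $[\tau-t,\tau]$.

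Applying the second estimate of Lemma \ref{lemrde1} to the resulting weighted integral over $[\tau-t,\tau]$ and multiplying through by $e^{\lambda}e^{-\lambda\tau}$, the three terms on the right-hand side become $e^{\lambda}M$, $e^{\lambda}M\,e^{-\lambda\tau}\int_{-\infty}^{\tau}e^{\lambda s}\|g(\cdot,s)\|^2\,ds$ and $e^{\lambda}M\int_{-\infty}^{0}e^{\lambda s}|z(\theta_{2,s}\omega)|^p\,ds$, which is exactly the claimed bound with $M$ replaced by $e^{\lambda}M$, still depending only on $\lambda$. There is essentially no obstacle: the only items requiring (routine) attention are the choice $T\ge\max\{T_1,1\}$, so that simultaneously the estimate of Lemma \ref{lemrde1} holds and the inclusion $[\tau-1,\tau]\subseteq[\tau-t,\tau]$ is guaranteed, and the bookkeeping confirming that the new constant remains independent of $\tau$, $\omega$ and $D$. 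The substantive analytic content — the energy inequality and the absorption of the term $e^{-\lambda t}\|v_{\tau-t}\|^2$ via $v_{\tau-t}\in D(\tau-t,\theta_{2,-t}\omega)$ and \eqref{attdom1} — has already been carried out in the proof of Lemma \ref{lemrde1}.
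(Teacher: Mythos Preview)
Your proposal is correct and follows essentially the same approach as the paper: both arguments restrict the weighted integral from Lemma~\ref{lemrde1} to $[\tau-1,\tau]$, use the lower bound $e^{\lambda s}\ge e^{\lambda(\tau-1)}$ on that interval to strip off the weight, and absorb the resulting factor $e^{\lambda}$ into the constant $M$. Your explicit handling of the requirement $T\ge 1$ to guarantee $[\tau-1,\tau]\subseteq[\tau-t,\tau]$ matches the paper's choice of $T\ge 1$.
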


\begin{proof}
By   Lemma \ref{lemrde1}   we see   that  there exists
$T =T(\tau, \omega, D) \ge 1$  such that  for    all $t \ge T$,
 $$
  \int_{\tau -1}^\tau e^{\lambda s}
\left (\| \nabla v(s, \tau -t,\theta_{2, -\tau} \omega, v_{\tau -t} ) \|^2 
+
\|   v(s, \tau -t, \theta_{2, -\tau} \omega, v_{\tau -t} ) + h z (\theta_{2, s-\tau} \omega ) \|^p_p \right ) ds
$$
 $$ \le 
  \int_{\tau -t}^\tau e^{\lambda s}
\left (\| \nabla v(s, \tau -t,\theta_{2, -\tau} \omega, v_{\tau -t} ) \|^2 
+
\|   v(s, \tau -t, \theta_{2, -\tau} \omega, v_{\tau -t} ) + h z (\theta_{2, s-\tau} \omega ) \|^p_p \right ) ds
$$
$$
\le Me^{\lambda \tau} +  M 
\int_{-\infty}^\tau
e^{\lambda s}  
     \| g(\cdot, s ) \|^2    d s
+Me^{\lambda \tau}  \int_{-\infty}^0 e^{\lambda s} |z(\theta_{2, s} \omega) |^p ds  
$$
Note that $e^{\lambda s} \ge e^{\lambda \tau} e^{-\lambda}$ for  all
$s \ge  \tau -1$. Thus Lemma \ref{lemrde2}  follows immediately. \end{proof}

  \begin{lem}
\label{lemrde3}
 Suppose  \eqref{f1}-\eqref{f4}  and \eqref{gcond1} hold.
Then for every $\tau \in \R$, $\omega \in \Omega$   and $D=\{D(\tau, \omega)
: \tau \in \R,  \omega \in \Omega\}  \in \cald_\lambda$,
 there exists  $T=T(\tau, \omega,  D) \ge 1$ such that for all $t \ge T$, the solution
 $v$ of equation \eqref{v1}  with $\omega$ replaced by
 $\theta_{2, -\tau} \omega$  satisfies
 $$
  \| \nabla v(\tau, \tau -t,\theta_{2, -\tau} \omega, v_{\tau -t} ) \|^2 
\le M  +  M e^{-\lambda \tau}
\int_{-\infty}^\tau
e^{\lambda s}  
     \| g(\cdot, s ) \|^2    d s
+M   \int_{-\infty}^0 e^{\lambda s} |z(\theta_{2, s} \omega) |^p ds ,
$$
 where $v_{\tau -t}\in D(\tau -t, \theta_{2, -t} \omega)$ and
  $M$ is a  positive constant depending   on      $\lambda$, but independent of $\tau$, $\omega$   and $D$.
\end{lem}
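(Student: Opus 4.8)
The plan is to derive the standard $H^1$-energy estimate for equation \eqref{v1} — by testing with $-\Delta v$ — and then to combine the resulting differential inequality with the time-averaged bounds of Lemma \ref{lemrde2} through the uniform Gronwall lemma.

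First I would take the inner product of \eqref{v1} with $-\Delta v$ in $\ltwo$. Writing $u = v + h\zto$ (so the right-hand side of \eqref{v1} is $f(x,u) + g(x,t) + \zto\Delta h$) and integrating by parts gives
$$
{\frac 12}{\frac d{dt}}\|\nabla v\|^2 + \lambda\|\nabla v\|^2 + \|\Delta v\|^2 = -\ii f(x,u)\Delta v\,dx - (g, \Delta v) - \zto(\Delta h, \Delta v).
$$
The last two terms are bounded by Young's inequality: $|(g,\Delta v)| + |\zto(\Delta h,\Delta v)| \le {\frac 12}\|\Delta v\|^2 + \|g\|^2 + c(1 + |\zto|^p)$, using $h \in H^2(\R^n)$. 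For the nonlinear term I would write $\Delta v = \Delta u - \zto\Delta h$, so that $-\ii f(x,u)\Delta v\,dx = -\ii f(x,u)\Delta u\,dx + \zto\ii f(x,u)\Delta h\,dx$. Integrating the first integral by parts and using $\nabla_x[f(x,u)] = (\partial_x f)(x,u) + (\partial_s f)(x,u)\nabla u$ together with \eqref{f3} and \eqref{f4}, one obtains
$$
-\ii f(x,u)\Delta u\,dx = \ii (\partial_x f)(x,u)\cdot\nabla u\,dx + \ii (\partial_s f)(x,u)|\nabla u|^2\,dx \le \|\psi_3\|\,\|\nabla u\| + \alpha_3\|\nabla u\|^2,
$$
and since $\|\nabla u\| \le \|\nabla v\| + |\zto|\,\|\nabla h\|$ this is bounded by $c_1\|\nabla v\|^2 + c(1 + |\zto|^p)$. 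For the remaining integral, \eqref{f2}, Hölder's inequality and $h \in W^{2,p}(\R^n)$ give $|\zto|\,\ii |f(x,u)|\,|\Delta h|\,dx \le c|\zto|\big(\|u\|_p^{p-1}\|\Delta h\|_p + \|\psi_2\|\,\|\Delta h\|\big) \le c\big(1 + |\zto|^p + \|u\|_p^p\big)$ after one more Young's inequality.

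Putting the estimates together and dropping $\|\Delta v\|^2$ from the left, I arrive at a differential inequality of the form
$$
{\frac d{dt}}\|\nabla v\|^2 \le c_1\|\nabla v\|^2 + b(t), \qquad b(t) = c_2\big(1 + |z(\theta_{2,t-\tau}\omega)|^p + \|g(\cdot,t)\|^2 + \|u(t)\|_p^p\big),
$$
on the interval $(\tau-1,\tau)$ (here I deliberately make no attempt to absorb $\|\nabla v\|^2$ into the dissipation, which is precisely why the uniform Gronwall lemma is needed rather than a direct absorbing argument). By the temperedness of $z$, condition \eqref{gcond1} and the $L^p(\R^n)$-bound of Lemma \ref{lemrde2}, the function $b$ is integrable over $(\tau-1,\tau)$; and $\int_{\tau-1}^\tau\|\nabla v(s)\|^2\,ds$ is bounded by Lemma \ref{lemrde2}. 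The uniform Gronwall lemma applied on $(\tau-1,\tau)$ then yields
$$
\|\nabla v(\tau,\tau-t,\theta_{2,-\tau}\omega,v_{\tau-t})\|^2 \le \left(\int_{\tau-1}^\tau\|\nabla v(s)\|^2\,ds + \int_{\tau-1}^\tau b(s)\,ds\right)e^{c_1},
$$
valid for all $t \ge T$, with $T = T(\tau,\omega,D) \ge 1$ inherited from Lemma \ref{lemrde2}. Finally I would convert the integrals over $(\tau-1,\tau)$ into the weighted integrals appearing in the statement, using $1 \le e^{\lambda}e^{-\lambda\tau}e^{\lambda s}$ for $s \ge \tau-1$ and then inserting the explicit bounds of Lemma \ref{lemrde2}; this produces exactly the claimed inequality with a new constant $M$ depending only on $\lambda$.

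The main obstacle is the nonlinear term $\ii f(x,u)\Delta v\,dx$. Because \eqref{f3} supplies only a one-sided bound on $\partial_s f$, one cannot estimate this term by pairing $f$ or $\partial_s f$ directly against $\nabla v$; the decomposition $\Delta v = \Delta u - \zto\Delta h$ is essential, since it converts the dangerous factor into $|\nabla u|^2$ tested against $\partial_s f \le \alpha_3$, and the leftover term $\zto\ii f(x,u)\Delta h\,dx$ is exactly what forces the use of the $L^p$-in-space, $L^1$-in-time bound on $u$ furnished by Lemma \ref{lemrde2} — which is why that lemma was proved before this one. The rest is routine Young/Hölder bookkeeping together with the uniform Gronwall inequality.
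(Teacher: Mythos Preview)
Your proposal is correct and follows essentially the same route as the paper: test \eqref{v1} with $-\Delta v$, handle the nonlinearity via the decomposition $\Delta v = \Delta u - \zto\,\Delta h$ together with \eqref{f2}--\eqref{f4}, arrive at the differential inequality ${\frac d{dt}}\|\nabla v\|^2 \le c(\|\nabla v\|^2 + \|u\|_p^p) + \|g\|^2 + c(1+|\zto|^p)$, and then close on $(\tau-1,\tau)$ using Lemma~\ref{lemrde2}. The only cosmetic difference is that you invoke the uniform Gronwall lemma by name, whereas the paper performs the equivalent ``integrate on $(s,\tau)$, then average in $s$ over $(\tau-1,\tau)$'' step by hand.
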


\begin{proof}
Multiplying    \eqref{v1} by 
    $\Delta v$ and then integrating the equation we get  
\be
\label{p43_1}
{\frac 12} {\frac d{dt}} \| \nabla v \|^2
+ \lambda \| \nabla  v \|^2
+ \| \Delta v \|^2
=-\ii f(x, v + h  \zto ) \Delta v dx
-(g + \zto \Delta h , \Delta v ).
\ee
By \eqref{f2}-\eqref{f4} we have the  following estimates  for  the  nonlinear 
term: 
$$
-\ii f(x, v + h  \zto ) \;  \Delta v dx
= -\ii f(x,  u) \;  \Delta  u dx
 +  \ii f(x,  u) \;  \zto \Delta  h dx
$$
$$
=\ii {\frac {\partial f}{\partial x}} (x, u) \; \nabla u dx
+ \ii {\frac {\partial f}{\partial u}} (x,u) \;  | \nabla  u |^2 dx
+ \ii f(x, u)\;  \zto \Delta h dx
$$
$$
\le \| \psi_3\| \| \nabla u \|
+ \beta \| \nabla u \|^2
+  \alpha_2 \ii |u|^{p-1}  \; |\zto\Delta h  | dx
+
\ii | \psi_2 (x) |  \; |\zto\Delta h  | dx
$$
$$
\le c_1 \left (
 \| \nabla v  + \zto \nabla h \|^2 + \| v+\zto h \|^p_p \right )
 + c_1 \left ( 1+    | \zto   |^p \right ).
$$
\be
\label{p43_2}
\le c_2 \left (
 \| \nabla v  \|^2 + \| v\|^p_p \right )
 + c_2 \left ( 1+    | \zto   |^p \right ).
\ee
Note that  the last term on the right-hand side
of \eqref{p43_1} is bounded by
\be
\label{p43_3}
|(g, \Delta v)| + |  ( \zto\Delta h  , \Delta v )|
\le {\frac 12} \| \Delta v \|^2
+ \| g \|^2 + \| \zto \Delta h \|^2.
\ee
It follows from  \eqref{p43_1}-\eqref{p43_3} that
$$
  {\frac d{dt}} \| \nabla v \|^2
+ 2 \lambda \| \nabla  v \|^2
+ \| \Delta  v \|^2 
 \le c_2 \left (
 \| \nabla v \|^2 + \| v\|^p_p \right )
 + \| g \|^2
 + c_3 \left (1  + 
  |   \zto   |^p \right ),
$$
which implies that
\be
\label{p43_7}
{\frac d{dt}} \| \nabla v \|^2
 \le c_2 \left (
 \| \nabla v \|^2 + \| v\|^p_p \right )
 + \| g \|^2
 + c_3 \left (1  + 
  |   \zto   |^p \right ).
  \ee
 Given $t \ge 0$, $\tau \in \R$,
 $\omega \in \Omega$ and
  $s \in (\tau-1, \tau)$, integrating  \eqref{p43_7}  on 
 $(s, \tau)$     we  get
 $$
 \| \nabla v(\tau, \tau -t,   \omega, v_{\tau -t} ) \|^2
 \le
 \| \nabla v(s, \tau -t,  \omega, v_{\tau -t} ) \|^2 
 + c_3 \int_s^{\tau}  (1 + |z(\theta_{2, \xi} \omega )|^p ) d \xi
 $$
 $$
 +
 c_2 \int_s^{\tau}
 \left (
 \| \nabla v(\xi, \tau -t,  \omega, v_{\tau -t} ) \|^2  
 +
 \|  v(\xi, \tau -t,  \omega, v_{\tau -t} ) \|^p_p 
 \right ) d \xi 
 + \int_s^\tau  \|g(\cdot, \xi )\|^2 d\xi.
 $$
 Integrating the above with respect to $s$ on  $(\tau-1, \tau)$, we  obtain that
 $$
 \| \nabla v(\tau, \tau -t,  \omega, v_{\tau -t} ) \|^2
 \le
 \int_{\tau -1}^\tau \| \nabla v(s, \tau -t,  \omega, v_{\tau -t} ) \|^2  ds
 + c_3 \int_{\tau -1}^{\tau}  (1 + |z(\theta_{2, \xi} \omega )|^p ) d \xi
 $$
 $$
 +
 c_2 \int_{\tau -1}^{\tau}
 \left (
 \| \nabla v(\xi, \tau -t, \omega, v_{\tau -t} ) \|^2  
 +
 \|  v(\xi, \tau -t,   \omega, v_{\tau -t} ) \|^p_p 
 \right ) d \xi 
 + \int_{\tau -1} ^\tau  \|g(\cdot, \xi )\|^2 d\xi.
 $$
 Now  replacing $\omega$ by
 $ \theta_{2, -\tau} \omega$, we get     that 
 $$
 \| \nabla v(\tau, \tau -t,  \theta_{2, -\tau} \omega, v_{\tau -t} ) \|^2
 \le
 \int_{\tau -1}^\tau \| \nabla v(s, \tau -t,  \theta_{2, -\tau} \omega, v_{\tau -t} ) \|^2  ds
 + c_3 \int_{\tau -1}^{\tau}  (1 + |z(\theta_{2, \xi-\tau} \omega )|^p ) d \xi
 $$
 $$
 +
 c_2 \int_{\tau -1}^{\tau}
 \left (
 \| \nabla v(\xi, \tau -t,  \theta_{2, -\tau} \omega, v_{\tau -t} ) \|^2  
 +
 \|  v(\xi, \tau -t,  \theta_{2, -\tau} \omega, v_{\tau -t} ) \|^p_p 
 \right ) d \xi 
 + \int_{\tau -1} ^\tau  \|g(\cdot, \xi )\|^2 d\xi.
 $$
  Let   $T=T( \tau, \omega, D)\ge 1$  be  the positive number  found in Lemma \ref{lemrde2}.
  Then it follows   from the above inequality 
   and  Lemma \ref{lemrde2} 
  that, for all $t \ge T$, 
 $$
 \| \nabla v(\tau, \tau -t,  \theta_{2, -\tau} \omega, v_{\tau -t} ) \|^2
 \le 
  c_4  +  c_4 e^{-\lambda \tau}
\int_{-\infty}^\tau
e^{\lambda s}  
     \| g(\cdot, s ) \|^2    d s
     $$
    \be
    \label{p43_10}
+c_4   \int_{-\infty}^0 e^{\lambda s} |z(\theta_{2, s} \omega) |^p ds  
 + c_4 \int_{\tau -1}^{\tau}   |z(\theta_{2, s-\tau} \omega )|^p d s
 + \int_{\tau -1} ^\tau  \|g(\cdot, s)\|^2 ds.
\ee
Note that   the last  two   terms
on the right-hand  side of \eqref{p43_10}
can be  controlled by the first three terms.
Indeed, we have
\be\label{p43_11}
  \int_{\tau -1}^{\tau}   |z(\theta_{2, s-\tau} \omega )|^p d s
=  \int_{-1}^0 |z(\theta_{2, s} \omega)|^p ds 
\le   e^\lambda
\int_{-\infty}^0 e^{\lambda s} |z(\theta_{2, s} \omega)|^p ds.
\ee
Similarly,  we have
\be\label{p43_12}
\int_{\tau -1} ^\tau  \|g(\cdot, s)\|^2 ds
\le
e^\lambda e^{-\lambda \tau}
\int_{\tau -1} ^\tau  e^{\lambda s}
 \|g(\cdot, s)\|^2 ds
 \le
e^\lambda e^{-\lambda \tau}
\int_{-\infty} ^\tau  e^{\lambda s}
 \|g(\cdot, s)\|^2 ds.
\ee
Thus, Lemma \ref{lemrde3}
   follows   from \eqref{p43_10}-\eqref{p43_12}. 
\end{proof}

We now  derive   uniform estimates on the tails
of solutions    for  large space and time variables.
   Such  estimates are  essential  for proving
the asymptotic compactness of  equations defined on unbounded domains.

  \begin{lem}
\label{lemrde4}
 Suppose  \eqref{f1}-\eqref{f4}  and \eqref{gcond1}
  hold.
Let  $\tau \in \R$, $\omega \in \Omega$   and $D=\{D(\tau, \omega)
: \tau \in \R,  \omega \in \Omega\}  \in \cald_\lambda$.
Then  for every $\eta>0$,   there exist
    $T=T(\tau, \omega,  D, \eta) \ge 1$
 and $K=K(\tau, \omega, \eta) \ge 1$ such that for all $t \ge T$, the solution
 $v$ of equation \eqref{v1}  with $\omega$ replaced by
 $\theta_{2, -\tau} \omega$  satisfies
 $$
 \int_{|x| \ge K}
 |v(\tau, \tau -t, \theta_{2, -\tau} \omega, v_{\tau -t} ) (x)|^2 dx \le \eta,
 $$
 where $v_{\tau -t}\in D(\tau -t, \theta_{2, -t} \omega)$.
\end{lem}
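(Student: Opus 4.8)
The plan is to carry out the standard cut-off (\emph{uniform tail}) estimate. Fix a smooth function $\rho:\R^+\to[0,1]$ with $\rho(s)=0$ for $0\le s\le1$, $\rho(s)=1$ for $s\ge2$, and $|\rho'(s)|\le c_0$ for all $s\ge0$. First I would multiply equation \eqref{v1} (with $\omega$ replaced by $\theta_{2,-\tau}\omega$) by $\rh\,v$ and integrate over $\R^n$; since $\nabla\!\left(\rh\right)=\rhp\,\frac{2x}{k^2}$ is supported in $\{\,k\le|x|\le\sqrt2\,k\,\}$ where it is bounded by $c/k$, this yields
\be
\label{propl4_1}
\frac12\frac{d}{dt}\ii\rh\,|v|^2\,dx+\lambda\ii\rh\,|v|^2\,dx\le\frac{c}{k}\big(\|v\|^2+\|\nabla v\|^2\big)+\ii\rh\,f(x,v+h\zto)\,v\,dx+\ii\rh\,(g+\zto\Delta h)\,v\,dx.
\ee
The nonlinear term I would split as $\ii\rh f(x,v+h\zto)(v+h\zto)\,dx-\zto\ii\rh f(x,v+h\zto)h\,dx$ and bound, using \eqref{f1}, \eqref{f2} and Young's inequality exactly as in the proof of Lemma \ref{lemrde1}, from above by $-\tfrac12\alpha_1\ii\rh|v+h\zto|^p\,dx+c\,(1+|\zto|^p)\,\varepsilon_k$, where
$$
\varepsilon_k=\int_{|x|\ge k}\big(\psi_1+|\psi_2|^2+|h|^2+|h|^p+|\Delta h|^2\big)dx\longrightarrow0\qquad(k\to\infty),
$$
the convergence following from $\psi_1\in L^1(\R^n)$, $\psi_2\in L^2(\R^n)$ and $h\in H^2(\R^n)\cap W^{2,p}(\R^n)$. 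The forcing term I would bound by $\tfrac14\lambda\ii\rh|v|^2\,dx+\tfrac{c}{\lambda}\int_{|x|\ge k}|g(x,t)|^2\,dx+c\,|\zto|^2\,\varepsilon_k$.

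Inserting these estimates into \eqref{propl4_1}, discarding the nonpositive term $-\tfrac12\alpha_1\ii\rh|v+h\zto|^p\,dx$, and absorbing $\tfrac14\lambda\ii\rh|v|^2\,dx$ into the left-hand side, I obtain a differential inequality
\be
\label{propl4_2}
\frac{d}{dt}\ii\rh\,|v|^2\,dx+\lambda\ii\rh\,|v|^2\,dx\le\frac{c}{k}\big(\|v\|^2+\|\nabla v\|^2\big)+c\int_{|x|\ge k}|g(x,t)|^2\,dx+c\,(1+|\zto|^p)\,\varepsilon_k.
\ee
Multiplying \eqref{propl4_2} by $e^{\lambda t}$, integrating over $(\tau-t,\tau)$ with $\omega$ replaced by $\theta_{2,-\tau}\omega$, dividing by $e^{\lambda\tau}$, and writing $\bar v(s)=v(s,\tau-t,\theta_{2,-\tau}\omega,v_{\tau-t})$, the computation in the proof of Lemma \ref{lemrde1} gives
\begin{align*}
\ii\rh\,|\bar v(\tau)|^2\,dx &\le e^{-\lambda t}\|v_{\tau-t}\|^2+\frac{c}{k}\int_{\tau-t}^\tau e^{\lambda(s-\tau)}\|\bar v(s)\|_{\hone}^2\,ds\\
&\quad+c\,e^{-\lambda\tau}\int_{-\infty}^\tau e^{\lambda s}\!\int_{|x|\ge k}|g(x,s)|^2\,dx\,ds+c\,\varepsilon_k\Big(\tfrac1\lambda+\int_{-\infty}^0 e^{\lambda s}|z(\theta_{2,s}\omega)|^p\,ds\Big).
\end{align*}
Now fix $\eta>0$. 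By Lemma \ref{lemrde1} there is $T_1=T_1(\tau,\omega,D)\ge1$ such that for every $t\ge T_1$ the second integral on the right-hand side is $\le R(\tau,\omega)$, a number independent of $t$ (finite by \eqref{gcond1} and the temperedness of $z$); by \eqref{gcond1}--\eqref{gcond2} together with dominated convergence, the third term tends to $0$ as $k\to\infty$; and the fourth term tends to $0$ as $k\to\infty$ since $\varepsilon_k\to0$. Hence I would \emph{first} choose $K=K(\tau,\omega,\eta)\ge1$ so large that, for all $t\ge T_1$, the sum of the last three terms above is $\le\tfrac12\eta$, and \emph{then}, invoking $v_{\tau-t}\in D(\tau-t,\theta_{2,-t}\omega)$ and $D\in\cald_\lambda$ (i.e.\ \eqref{attdom1} with $s=-t$), choose $T=T(\tau,\omega,D,\eta)\ge T_1$ so that $e^{-\lambda t}\|v_{\tau-t}\|^2\le\tfrac12\eta$ for all $t\ge T$. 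Since $\rh=1$ for $|x|\ge\sqrt2\,k$, this gives $\int_{|x|\ge\sqrt2\,K}|\bar v(\tau)|^2\,dx\le\eta$ for all $t\ge T$, which is the assertion (with $\sqrt2\,K$ renamed $K$).

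The step I expect to be the main obstacle is the handling of the non-autonomous forcing $g$: as $g$ lies only in $L^2_{loc}(\R,\ltwo)$, one has to pass from the hypothesis \eqref{gcond1} to its tail form \eqref{gcond2} and apply dominated convergence when letting $k\to\infty$, and one must order the two limiting processes correctly — first fixing $t\ge T_1$ so that Lemma \ref{lemrde1} furnishes a bound on $\int_{\tau-t}^\tau e^{\lambda(s-\tau)}\|\bar v(s)\|_{\hone}^2\,ds$ that is uniform in $t$, and only afterwards sending $k\to\infty$. The remaining pieces — cutting off the diffusion term, estimating the nonlinearity via \eqref{f1}--\eqref{f2}, and the tail smallness of $h$, $\psi_1$, $\psi_2$ — are routine repetitions of computations already made in Lemma \ref{lemrde1}.
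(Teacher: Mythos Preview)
Your proposal is correct and follows essentially the same route as the paper: the same cut-off $\rho(|x|^2/k^2)$, the same splitting and estimation of the nonlinear term via \eqref{f1}--\eqref{f2}, the same $c/k$ bound on the cross term from $\nabla\rho$, the same use of Lemma~\ref{lemrde1} to control $\int_{\tau-t}^\tau e^{\lambda(s-\tau)}\|\bar v(s)\|_{\hone}^2\,ds$ uniformly in $t$, and the same appeal to \eqref{gcond2} and to $D\in\cald_\lambda$ for the remaining terms. The only cosmetic difference is that the paper introduces an auxiliary parameter $\eta_0$ and chooses successive thresholds $K_1\le K_2\le K_3\le K_4$ for the separate tail integrals of $\psi_1$, $\psi_2$, $h$, and $g$, whereas you bundle the first three into a single quantity $\varepsilon_k\to0$; the logic and the ordering of limits are identical.
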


\begin{proof}
Let $\rho$ be a smooth function defined on $   \R^+$ such that
$0\le \rho(s) \le 1$ for all $s \in \R^+$, and
$$
\rho (s) = \left \{
\begin{array}{ll}
  0 & \quad \mbox{for} \ 0\le s \le 1; \\
 1 & \quad \mbox{for}  \  s \ge 2.
\end{array}
\right.
$$
Note that 
$   \rho^\prime    $  is bounded on $  \R^+$.
Multiplying  \eqref{v1} 
by   $\rho({\frac {|x|^2}{k^2}})v$
and then integrating the equation, we get
 $$
 {\frac 12} {\frac d{dt}} \ii \rh |v|^2 dx
 +\lambda \ii \rh |v|^2 dx
 - \ii  \rh v    \Delta v dx
 $$
 \be
 \label{p44_1}
=  \ii f(x, u) \rh v dx
 + \ii \left ( g + \zto\Delta  h  \right ) \rh v dx.
 \ee
 We first estimate  the term  involving $\Delta v$, for which we have
   $$
     \ii  \rh v  \Delta v  dx
    =  - \ii | \nabla v |^2 \rh dx
    - \ii v \rhp {\frac {2x}{k^2}} \cdot \nabla v dx
    $$
    \be
    \label{p44_3}
    \le  - \int_{k\le |x| \le \sqrt{2} k} v \rhp {\frac {2x}{k^2}} \cdot \nabla v dx
    \le {\frac {c_1}k} 
    \int_{k\le |x| \le \sqrt{2} k}
      |v| \; |\nabla v | dx
    \le {\frac {c_1}k} (\| v \|^2 + \| \nabla v \|^2 ).
    \ee
    Dealing with 
          the nonlinear term as in \eqref{p41_2}, 
          by \eqref{f1} and \eqref{f2}  we can verify that 
$$
  \ii f(x, u) \rh v dx
  \le
 -  {\frac 12} \alpha_1
\ii |u|^p \rh dx + \ii \psi_1 \rh dx
$$
\be
\label{p44_8}
+  \ii \psi_2^2 \rh dx
 + c_2 \ii  \left (  |\zto h|^p +  | \zto h|^2 \right ) \rh dx.
\ee
In addition, 
  the last term on the right-hand side of \eqref{p44_1} satisfies
\be
\label{p44_9}
| \ii (g + \zto \Delta h ) \rh v dx|
\le
{\frac 12} \lambda \ii \rh |v|^2 dx
+ {\frac 1\lambda} \ii (g^2 + |\zto\Delta h  |^2 ) \rh dx.
\ee
Thus, it follows   from 
  \eqref{p44_1}-\eqref{p44_9}  that 
$$
  {\frac d{dt}} \ii \rh |v|^2 dx
+   \lambda \ii \rh |v|^2 dx
 $$
$$
\le {\frac {c_3} k} (\| \nabla v \|^2 + \| v \|^2)
+
c_3 \ii
\left (   |\psi_1| +  |\psi_2|^2
+   g^2 \right )\rh
dx
$$
\be
\label{p44_11}
+c_3 \ii \left (
|\zto\Delta h  |^2  + |\zto h |^2 + | \zto h |^p \right )
\rh dx.
\ee
Since $\psi_1 \in L^1(\R^n)$, given $\eta_0>0$, there exists
$K_1 =K_1(\eta_0) \ge 1$ such that   for all  $k\ge K_1$,
\be\label{p44_12}
c_3 \ii|\psi_1| \rh dx
=c_3 \int_{|x| \ge k} |\psi_1| \rh dx
\le
c_3 \int_{|x| \ge k}   |\psi_1(x)| dx
\le \eta_0.
\ee
Similarly,  since $\psi_2  \in \ltwo$  and
$h\in H^2(\R^n) \bigcap W^{2,p}(\R^n)$, 
there exists $K_2 =K_2(\eta_0) \ge K_1$ such that   for all
$k \ge K_2$, 
$$
c_3 \ii \left ( |\psi_2|^2 +
|\zto\Delta h  |^2  + |\zto h |^2 + | \zto h |^p \right )
\rh dx
$$
\be\label{p44_13}
\le \eta_0 (1 + |\zto|^2 + |\zto|^p)
\le c_4 \eta_0 (1 +   |\zto|^p).
\ee
It  follows    from \eqref{p44_11}-\eqref{p44_13} that
there exists $K_3=K_3(\eta_0) \ge K_2$   such that   for all
$k \ge K_3$
$$
  {\frac d{dt}} \ii \rh |v|^2 dx
+   \lambda \ii \rh |v|^2 dx
 $$
$$
\le  \eta_0   (\| \nabla v \|^2 + \| v \|^2)
+ c_5  \eta_0 (1 +   |\zto|^p)
+ c_3 \int_{|x| \ge k} 
     g^2(x, t)  dx.
$$
Multiplying the above by $e^{\lambda t}$ and then integrating
the inequality on $(\tau-t, \tau)$   with $t \ge 0$, we get
that   for each $\omega \in \Omega$, 
$$
\ii \rh |v(\tau, \tau -t, \omega, v_{\tau -t})|^2 dx
- e^{-\lambda t} \ii \rh |v_{\tau -t} (x)|^2 dx
$$
$$
\le  \eta_0 e^{ -\lambda \tau}
\int_{\tau -t}^\tau e^{\lambda s}
\|v(s, \tau -t, \omega, v_{\tau -t})\|^2_{H^1(\R^n)} ds
+{\frac {c_5 \eta_0}\lambda}
$$
\be\label{p44_20}
+c_5 \eta_0  
  \int_{\tau -t}^\tau e^{\lambda (s-\tau)}
|z(\theta_{2,s} \omega )|^p ds  
+ c_3  \int_{\tau -t}^\tau
\int_{|x| \ge k}  e^{\lambda (s-\tau)} g^2(x,s) dx ds.
\ee
Replacing $\omega$ by $\theta_{2, -\tau} \omega$ in
\eqref{p44_20}   we find   that,   for $\tau \in \R$,
$t \ge  0$,  $\omega \in \Omega$
and $k \ge K_3$, 
$$
\ii \rh |v(\tau, \tau -t, \theta_{2, -\tau}\omega, v_{\tau -t})|^2 dx
- e^{-\lambda t} \ii \rh |v_{\tau -t} (x)|^2 dx
$$
$$
\le  \eta_0 e^{ -\lambda \tau}
\int_{\tau -t}^\tau e^{\lambda s}
\|v(s, \tau -t, \theta_{2, -\tau}\omega, v_{\tau -t})\|^2_{H^1(\R^n)} ds
+{\frac {c_5 \eta_0}\lambda}
$$
$$
+c_5 \eta_0  
  \int_{\tau -t}^\tau e^{\lambda (s-\tau)}
|z(\theta_{2,s-\tau} \omega )|^p ds  
+ c_3  \int_{\tau -t}^\tau
\int_{|x| \ge k}  e^{\lambda (s-\tau)} g^2(x,s) dx ds
$$
$$
\le  \eta_0 e^{ -\lambda \tau}
\int_{\tau -t}^\tau e^{\lambda s}
\|v(s, \tau -t, \theta_{2, -\tau}\omega, v_{\tau -t})\|^2_{H^1(\R^n)} ds
+{\frac {c_5 \eta_0}\lambda}
$$
\be\label{p44_21}
+c_5 \eta_0  
  \int_{-\infty}^0 e^{\lambda s}
|z(\theta_{2,s} \omega )|^p ds  
+ c_3  \int_{-\infty}^\tau
\int_{|x| \ge k}  e^{\lambda (s-\tau)} g^2(x,s) dx ds.
\ee
By \eqref{gcond2} we see    that there exists
$K_4 =K_4(\tau, \eta_0 ) \ge K_3$ such that   for all
$k \ge K_4$,
\be
\label{p44_22}
c_3  \int_{-\infty}^\tau
\int_{|x| \ge k}  e^{\lambda (s-\tau)} g^2(x,s) dx ds
\le \eta_0.
\ee
It follows   from  \eqref{p44_21}-\eqref{p44_22}
and Lemma \ref{lemrde1}  that  there 
exists  $T_1=T_1(\tau, \omega, D) >0$ such that
for   all $t \ge T_1$   and $k \ge K_4$,
$$
\ii \rh |v(\tau, \tau -t, \theta_{2, -\tau}\omega, v_{\tau -t})|^2 dx
- e^{-\lambda t} \ii \rh |v_{\tau -t} (x)|^2 dx
$$
\be\label{p44_40}
\le
c_6 \eta_0
+
c_6 \eta_0  
  \int_{-\infty}^0 e^{\lambda s}
|z(\theta_{2,s} \omega )|^p ds 
+
c_6 \eta_0  \int_{-\infty}^\tau e^{\lambda (s-\tau)}
\|g(\cdot, s)\|^2 ds.
\ee
Since $v_{\tau -t}  \in D(\tau -t, \theta_{2, -t} \omega)$
and $D \in \cald_\lambda$ we find   that
$$
\limsup_{t \to \infty}
e^{-\lambda t} \ii \rh |v_{\tau -t} (x)|^2 dx
\le
\limsup_{t \to \infty}
e^{-\lambda t}   \|D(\tau -t, \theta_{2, -t} \omega) \|^2   =0,
$$
which along   with \eqref{p44_40}
implies    that    there    exists 
$T_2=T_2(\tau, \omega, D, \eta_0) \ge T_1$   such   that
for  all $t \ge T_2$   and $k \ge K_4$, 
$$
\int_{|x| \ge \sqrt{2} k} 
  |v(\tau, \tau -t, \theta_{2, -\tau}\omega, v_{\tau -t})|^2 dx
\le
\ii \rh |v(\tau, \tau -t, \theta_{2, -\tau}\omega, v_{\tau -t})|^2 dx
$$
\be\label{p44_41}
\le
c_7 \eta_0 \left  (
1 +  
  \int_{-\infty}^0 e^{\lambda s}
|z(\theta_{2,s} \omega )|^p    ds
+
   \int_{-\infty}^\tau e^{\lambda (s-\tau)}
\|g(\cdot, s)\|^2  ds \right ).
\ee
Then Lemma \ref{lemrde4}   follows   from
\eqref{p44_41} by choosing 
$\eta_0$  appropriately    for  a given $\eta>0$.
\end{proof}

We are now   ready  to   derive  uniform estimates
on the  solutions  $u$ of the stochastic   equation
\eqref{41} based on those estimates  of 
the solutions  $v$ of equation \eqref{v1}.
By \eqref{rdphi}  we find  that, 
for each $\tau \in \R$, $t \ge 0$  and $\omega \in \Omega$,
\be
\label{uv1}
u(\tau, \tau -t,  \theta_{2, -\tau} \omega,  u_{\tau -t})
= v(\tau, \tau -t,  \theta_{2, -\tau} \omega,  v_{\tau -t})
+ h z(\omega),
\ee
where $v_{\tau -t} = u_{\tau -t}-hz(\theta_{2,  -t} \omega)$.  Let $D=\{D(\tau, \omega): \tau \in \R, \omega \in \Omega\} \in \cald_\lambda$. We then  define another
family  ${\tilde{D}}$   of subsets of $\ltwo$ from $D$.  Given $\tau \in \R$   and $\omega \in \Omega$,  set
\be\label{tilded1}
{\tilde{D}}(\tau, \omega)
 = \{ \varphi \in \ltwo:  \|\varphi \|^2
 \le  2\|D(\tau, \omega)\|^2 + 2 |z( \omega)|^2 \|h \|^2 \}.
 \ee
 Let ${\tilde{D}}$  be the family  consisting  of those sets given by \eqref{tilded1}, i.e.,
 \be\label{tilded2}
 {\tilde{D}} = 
\{ {\tilde{D}}(\tau, \omega) :   {\tilde{D}}(\tau, \omega) \mbox{ is    defined by }  \eqref{tilded1},  \tau \in \R,  \omega \in \Omega \}.
\ee
Since $|z(\omega)|$ is  tempered    and $D \in \cald_\lambda$,
it is   easy   to check    that
${\tilde{D}}$ given  by \eqref{tilded2}  also belongs
to $\cald_\lambda$.
Furthermore,  if $u_{\tau- t}
\in D(\tau-t, \theta_{2, -t} \omega )$, then
$v_{\tau -t} = u_{\tau -t}-hz(\theta_{2,  -t} \omega)$
belongs  to  $  {\tilde{D}} (\tau-t, \theta_{2, -t} \omega )$.  
Based on the above   analysis,   the uniform estimates
in $\hone$  of the solutions
of equation \eqref{41}  follows
immediately from  \eqref{uv1}, Lemma \ref{lemrde1}
and Lemma \ref{lemrde3}.

  \begin{lem}
\label{lemrde5}
 Suppose  \eqref{f1}-\eqref{f4}  and \eqref{gcond1} hold.
Then for every $\tau \in \R$, $\omega \in \Omega$   and $D=\{D(\tau, \omega)
: \tau \in \R,  \omega \in \Omega\}  \in \cald_\lambda$,
 there exists  $T=T(\tau, \omega,  D) \ge 1$ such that for all $t \ge T$, the solution
 $u$ of problem  \eqref{41}-\eqref{42}    satisfies
 \be\label{lemrde5_a1}
  \|   u(\tau, \tau -t,\theta_{2, -\tau} \omega, u_{\tau -t} ) \|^2 
\le \beta (1+ z^2(\omega))  +  \beta e^{-\lambda \tau}
\int_{-\infty}^\tau
e^{\lambda s}  
     \| g(\cdot, s ) \|^2    d s
+\beta   \int_{-\infty}^0 e^{\lambda s} |z(\theta_{2, s} \omega) |^p ds ,
\ee
 where $u_{\tau -t}\in D(\tau -t, \theta_{2, -t} \omega)$ and
  $\beta$ is a  positive constant depending   on      $\lambda$, but independent of $\tau$, $\omega$   and $D$.
\end{lem}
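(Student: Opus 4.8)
The plan is to derive the bound for the solution $u$ of the stochastic problem \eqref{41}--\eqref{42} directly from the corresponding bound for the solution $v$ of the transformed deterministic equation \eqref{v1}, via the substitution $u = v + hz$ together with Lemma \ref{lemrde1}. First I would recall identity \eqref{uv1}: for every $\tau \in \R$, $t \ge 0$ and $\omega \in \Omega$,
$$
u(\tau, \tau -t, \theta_{2, -\tau} \omega, u_{\tau -t})
= v(\tau, \tau -t, \theta_{2, -\tau} \omega, v_{\tau -t}) + h z(\omega),
\qquad v_{\tau -t} = u_{\tau -t} - h z(\theta_{2, -t}\omega),
$$
and then apply the elementary inequality $\|a+b\|^2 \le 2\|a\|^2 + 2\|b\|^2$ to get
$$
\| u(\tau, \tau -t, \theta_{2, -\tau} \omega, u_{\tau -t}) \|^2
\le 2 \| v(\tau, \tau -t, \theta_{2, -\tau} \omega, v_{\tau -t}) \|^2
+ 2 |z(\omega)|^2 \| h \|^2 .
$$

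Next I would use the auxiliary family $\tilde{D}$ associated with $D$ by \eqref{tilded1}--\eqref{tilded2}. The discussion preceding the lemma already supplies the two facts I need: $\tilde{D} \in \cald_\lambda$ (because $|z(\omega)|$ is tempered and $D \in \cald_\lambda$), and if $u_{\tau-t} \in D(\tau-t, \theta_{2,-t}\omega)$ then $v_{\tau-t} = u_{\tau-t} - h z(\theta_{2,-t}\omega)$ belongs to $\tilde{D}(\tau-t, \theta_{2,-t}\omega)$. Hence Lemma \ref{lemrde1}, applied with $D$ replaced by $\tilde{D}$, yields a number $T = T(\tau, \omega, D) \ge 1$ and a constant $M = M(\lambda)$ such that, for all $t \ge T$,
$$
\| v(\tau, \tau -t, \theta_{2, -\tau} \omega, v_{\tau -t}) \|^2 \le M + M e^{-\lambda\tau}\int_{-\infty}^\tau e^{\lambda s}\|g(\cdot,s)\|^2 \, ds + M\int_{-\infty}^0 e^{\lambda s}|z(\theta_{2,s}\omega)|^p \, ds .
$$

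Finally I would combine the two displays, absorbing the fixed constant $\|h\|^2$ (finite since $h \in H^2(\R^n)\cap W^{2,p}(\R^n)$) into a new generic constant $\beta = \beta(\lambda)$, independent of $\tau$, $\omega$ and $D$, to arrive at exactly \eqref{lemrde5_a1}. I do not anticipate a genuine obstacle: all the analytic work is carried by Lemma \ref{lemrde1}, and the only points requiring care --- that $\tilde{D}$ is itself in $\cald_\lambda$ and that the pulled-back datum $v_{\tau-t}$ lies in $\tilde{D}$ --- have already been checked in the text above, so the argument is essentially bookkeeping around the change of variables $u = v + hz$. (Should the $\hone$ estimate alluded to in the preceding paragraph also be wanted, the same scheme applies verbatim, invoking Lemma \ref{lemrde3} alongside Lemma \ref{lemrde1} and using $\|\nabla u\|^2 \le 2\|\nabla v\|^2 + 2|z(\omega)|^2\|\nabla h\|^2$.)
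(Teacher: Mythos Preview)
Your proposal is correct and follows exactly the approach the paper takes: the paper's proof is the sentence ``Based on the above analysis, the uniform estimates in $\hone$ of the solutions of equation \eqref{41} follow immediately from \eqref{uv1}, Lemma \ref{lemrde1} and Lemma \ref{lemrde3},'' and the ``above analysis'' is precisely the construction of $\tilde D$ in \eqref{tilded1}--\eqref{tilded2} together with the two facts you cite. Your write-up simply spells out those details; the only cosmetic point is that Lemma \ref{lemrde1} yields $T=T(\tau,\omega,\tilde D)>0$, which depends on $D$ through $\tilde D$ and can be replaced by $\max(T,1)$ to match the stated $T\ge 1$.
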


Similarly,  by \eqref{uv1} and Lemma \ref{lemrde4},  we have    the following
uniform estimates on the tails of solutions   of equation \eqref{41}. 
 
  \begin{lem}
\label{lemrde6}
 Suppose  \eqref{f1}-\eqref{f4}  and \eqref{gcond1}
  hold.
Let  $\tau \in \R$, $\omega \in \Omega$   and $D=\{D(\tau, \omega)
: \tau \in \R,  \omega \in \Omega\}  \in \cald_\lambda$.
Then  for every $\eta>0$,   there exist
   $T=T(\tau, \omega,  D, \eta) \ge 1$
 and $K=K(\tau, \omega, \eta) \ge 1$ such that for all $t \ge T$, the solution
 $u$ of  problem  \eqref{41}-\eqref{42}      satisfies
 $$
 \int_{|x| \ge K}
 |u(\tau, \tau -t, \theta_{2, -\tau} \omega, u_{\tau -t} ) (x)|^2 dx \le \eta,
 $$
 where $u_{\tau -t}\in D(\tau -t, \theta_{2, -t} \omega)$.
\end{lem}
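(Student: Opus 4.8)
The plan is to obtain this tail estimate for the stochastic solution $u$ as an immediate corollary of the corresponding estimate for $v$ in Lemma \ref{lemrde4}, by means of the splitting $u = v + h z(\omega)$ recorded in \eqref{uv1}. The first step is to set up the right auxiliary family: if $u_{\tau-t}\in D(\tau-t,\theta_{2,-t}\omega)$ with $D\in\cald_\lambda$, then the shifted initial datum $v_{\tau-t}=u_{\tau-t}-hz(\theta_{2,-t}\omega)$ belongs to $\tilde D(\tau-t,\theta_{2,-t}\omega)$, where $\tilde D$ is the family defined in \eqref{tilded1}--\eqref{tilded2}, and $\tilde D\in\cald_\lambda$ because $|z(\omega)|$ is tempered. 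Hence Lemma \ref{lemrde4} is applicable with $D$ replaced by $\tilde D$.

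Next, by \eqref{uv1} (an identity in $\ltwo$, hence valid pointwise a.e.) and the elementary inequality $|a+b|^2\le 2|a|^2+2|b|^2$, for every $K\ge 1$ one has
\[
\int_{|x|\ge K}\bigl|u(\tau,\tau-t,\theta_{2,-\tau}\omega,u_{\tau-t})(x)\bigr|^2\,dx \le 2\int_{|x|\ge K}\bigl|v(\tau,\tau-t,\theta_{2,-\tau}\omega,v_{\tau-t})(x)\bigr|^2\,dx + 2\,z^2(\omega)\int_{|x|\ge K}|h(x)|^2\,dx .
\]
Since $h\in H^2(\R^n)\subseteq\ltwo$, the last integral tends to $0$ as $K\to\infty$; so, given $\eta>0$ and a fixed $\omega$, one first chooses $K_1=K_1(\omega,\eta)\ge 1$ with $2\,z^2(\omega)\int_{|x|\ge K_1}|h|^2\,dx\le \eta/2$. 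Then one applies Lemma \ref{lemrde4} to $\tilde D$ with tolerance $\eta/4$, obtaining $T=T(\tau,\omega,D,\eta)\ge 1$ and $K_2=K_2(\tau,\omega,\eta)\ge 1$ such that $\int_{|x|\ge K_2}|v(\tau,\tau-t,\theta_{2,-\tau}\omega,v_{\tau-t})|^2\,dx\le\eta/4$ for all $t\ge T$. Taking $K=\max\{K_1,K_2\}$ and adding the two contributions yields the assertion.

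As for the main obstacle: there is essentially no new analytic difficulty, since the substantive work — the cut-off energy estimate, the use of \eqref{gcond2}, and the $\cald_\lambda$-temperedness arguments — was already carried out in Lemma \ref{lemrde4}. The only points requiring mild care are bookkeeping: checking that $\tilde D\in\cald_\lambda$ so that Lemma \ref{lemrde4} genuinely applies to it; tracking the factor $2$ from $|a+b|^2\le 2|a|^2+2|b|^2$ so that the $v$-part and the $h$-part each contribute at most $\eta/2$; and noting that $z^2(\omega)$ is finite for each fixed $\omega$ in the $\thtwot$-invariant full-measure set on which $z$ is defined, so that the $h$-tail can indeed be made arbitrarily small by enlarging $K$.
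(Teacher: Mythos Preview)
Your proposal is correct and follows exactly the approach the paper intends: the paper simply states that Lemma~\ref{lemrde6} follows from \eqref{uv1} and Lemma~\ref{lemrde4}, after having set up the auxiliary family $\tilde D\in\cald_\lambda$ in \eqref{tilded1}--\eqref{tilded2}, and you have spelled out those details faithfully.
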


\subsection{Existence of   Attractors for  Reaction-Diffusion Equations}
 
 In  this subsection, we establish  the existence of a $\cald_\lambda$-pullback attractor   for the cocycle $\Phi$  associated  with the stochastic problem \eqref{41}-\eqref{42}.  We first notice  that, by Lemma \ref{lemrde5},  $\Phi$  has a closed
 $\cald_\lambda$-pullback  absorbing   set  $K$  in  $\ltwo$.
 More precisely, given $\tau \in \R$    and $\omega \in \Omega$, let
$
 K(\tau, \omega) = \{ u\in \ltwo: \|u \|^2
 \le  L(\tau, \omega) \}$,  where
 $L(\tau, \omega)$ is the   constant 
 given by  the right-hand side 
 of  \eqref{lemrde5_a1}. 
 It is evident   that, for each $\tau \in \R$,
  $L(\tau, \cdot): \Omega \to \R$ is 
  $(\calf, \calb (\R))$-measurable.
  In addition,  by simple calculations,  one can  verify     that
  $$
  \lim_{r \to -\infty}
  e^{\lambda r} \| K(\tau +r, \theta_{2, r} \omega)\|^2
  = \lim_{r \to -\infty}
  e^{\lambda r}L (\tau +r, \theta_{2, r} \omega ) =0.
  $$
  In other  words, $K=\{K(\tau, \omega): \tau \in \R,  \omega \in \Omega \}$
  belongs   to $\cald_\lambda$. 
 For every $\tau \in \R$,
  $\omega \in \Omega$
  and $D \in \cald_\lambda$,  it follows  from Lemma \ref{lemrde5} that
  there  exists $T=T(\tau, \omega, D) \ge 1$  such that   for   all
  $t \ge T$,
  $$
  \Phi (t, \tau-t, \theta_{2, -t} \omega, D(\tau -t, \theta_{2, -t} \omega ))
  \subseteq K(\tau, \omega).
  $$
  Thus we find 
   that $K=\{K(\tau, \omega): \tau \in \R,  \omega \in \Omega \}$
  is a closed measurable $\cald_\lambda$-pullback  absorbing set
  in  $\cald_\lambda$   for $\Phi$. 
  We next  show   that $\Phi$ is $\cald_\lambda$-pullback asymptotically compact in $\ltwo$.

 \begin{lem}
\label{acrde}
 Suppose  \eqref{f1}-\eqref{f4}  and \eqref{gcond1}
  hold.
 Then $\Phi$ is $\cald_\lambda$-pullback
 asymptotically compact  in $\ltwo$,
that is, for  every $\tau \in \R$, $\omega \in \Omega$, 
 $D=\{D(\tau, \omega): \tau \in \R, \omega \in \Omega \}$
 $  \in \cald_\lambda$,
and $t_n \to \infty$,
 $u_{0,n}  \in D(\tau -t_n, \theta_{2, -t_n} \omega )$,  the sequence
 $\Phi(t_n, \tau -t_n,  \theta_{2, -t_n} \omega,   u_{0,n}  ) $   has a
   convergent
subsequence in $\ltwo $.
\end{lem}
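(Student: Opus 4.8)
The plan is to combine the uniform bound on solutions in $\hone$ with the uniform smallness of their tails, and then pass to a subsequence using the compactness of the Sobolev embedding on bounded balls. First I would fix $\tau\in\R$, $\omega\in\Omega$, $D=\{D(\tau,\omega):\tau\in\R,\ \omega\in\Omega\}\in\cald_\lambda$, a sequence $t_n\to\infty$, and $u_{0,n}\in D(\tau-t_n,\theta_{2,-t_n}\omega)$. Writing $u_n:=\Phi(t_n,\tau-t_n,\theta_{2,-t_n}\omega,u_{0,n})$, the cocycle formula \eqref{rdphi} and \eqref{uv1} give
$$
u_n=u(\tau,\tau-t_n,\theta_{2,-\tau}\omega,u_{0,n})=v(\tau,\tau-t_n,\theta_{2,-\tau}\omega,v_{0,n})+hz(\omega),
$$
where $v_{0,n}=u_{0,n}-hz(\theta_{2,-t_n}\omega)\in\tilde D(\tau-t_n,\theta_{2,-t_n}\omega)$ and $\tilde D\in\cald_\lambda$ is the family defined in \eqref{tilded1}--\eqref{tilded2}. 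Applying Lemma \ref{lemrde1} and Lemma \ref{lemrde3} to $v$ with $D$ replaced by $\tilde D$, and using $h\in H^2(\R^n)\subseteq\hone$, there is $N_1\in\N$ such that $\{u_n\}_{n\ge N_1}$ is bounded in $\hone$. Hence, along a subsequence (not relabelled), there exists $\xi\in\hone$ with $u_n\rightharpoonup\xi$ weakly in $\hone$ and weakly in $\ltwo$; it remains to upgrade this to strong convergence in $\ltwo$.

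Next I would fix $\eta>0$. By Lemma \ref{lemrde6} there exist $K_\eta\ge1$ and $N_2\ge N_1$ such that $\int_{|x|\ge K_\eta}|u_n(x)|^2\,dx\le\eta$ for all $n\ge N_2$; by weak lower semicontinuity of the $L^2$-norm on the set $\{|x|\ge K_\eta\}$ the same bound holds for $\xi$. On the bounded domain $B_{K_\eta}=\{x\in\R^n:|x|<K_\eta\}$ the embedding $H^1(B_{K_\eta})\hookrightarrow L^2(B_{K_\eta})$ is compact, so passing to a further subsequence $u_n\to\xi$ strongly in $L^2(B_{K_\eta})$, and therefore $\limsup_{n\to\infty}\|u_n-\xi\|^2\le 4\eta$. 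A standard diagonal argument over $\eta=1/m$, $m\in\N$, then produces a single subsequence along which $u_n\to\xi$ strongly in $\ltwo$, which proves the $\cald_\lambda$-pullback asymptotic compactness of $\Phi$.

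The main obstacle is exactly the failure of the embedding $H^1(\R^n)\hookrightarrow L^2(\R^n)$ to be compact: without it the weak $\hone$-limit need not be a strong $\ltwo$-limit, and the $\hone$-bound alone is not enough. This is precisely where the uniform tail estimate of Lemma \ref{lemrde6} is used, to localize the convergence problem to balls on which Rellich--Kondrachov applies; the $\hone$-bound itself is a routine consequence of the energy estimates in Lemmas \ref{lemrde1} and \ref{lemrde3}.
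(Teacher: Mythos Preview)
Your proof is correct and rests on the same two ingredients as the paper's: the uniform $H^1$ bound (which the paper records via Lemma \ref{lemrde5}, drawing on Lemmas \ref{lemrde1} and \ref{lemrde3} exactly as you do) and the uniform tail estimate of Lemma \ref{lemrde6}, combined with the compact embedding $H^1(B_K)\hookrightarrow L^2(B_K)$ on bounded balls. The paper packages the final step a bit differently: instead of extracting a weak $H^1$-limit and upgrading to strong $L^2$ convergence, it shows total boundedness directly, producing for each $\eta>0$ a finite $\eta$-net by covering in $L^2(B_K)$ via Rellich and controlling the $L^2(B_K^c)$ piece by $\eta/2$. Both routes are standard; the paper's is slightly shorter, while yours identifies the limit explicitly. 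One small simplification of your argument: the diagonal step is unnecessary, since once your subsequence converges weakly in $\hone$ it already converges weakly in each $H^1(B_K)$ and hence, by compactness of the embedding, strongly in every $L^2(B_K)$ without further extraction; thus $\limsup_n\|u_n-\xi\|^2\le 4\eta$ holds for every $\eta>0$ along the same subsequence, giving $u_n\to\xi$ in $\ltwo$ directly.
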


\begin{proof}
 We  need to show  that for every
 $\eta>0$,  the sequence
 $\Phi(t_n, \tau -t_n,  \theta_{2, -t_n} \omega,   u_{0,n}  ) $
  has a finite covering
 of balls of radii less than $\eta$.
 Given $K>0$,   denote by
$ {Q}_K = \{ x \in \R^n:  |x|     \le K \}$
and  ${Q}^c_K = \R^n\setminus Q_K$. 
It follows  from Lemma \ref{lemrde6}  that  
there exist $K= K(\tau, \omega, \eta) \ge 1$  and $N_1=N_1(\tau, \omega,  D, \eta) \ge 1$
such that for  all $n \ge N_1$,
\be\label{pacrde_1}
\|   \Phi  (t_n  ,  \tau -t_n  ,  \theta_{2, -t_n} \omega,   u_{0, n}  )  \| _{L^2 ({Q}^c_{K} )
   }
\le \frac{\eta}{2}.
\ee
By Lemma   \ref{lemrde5}     there
exists $N_2=N_2(\tau, \omega, D, \eta) \ge N_1$
such that for all $n \ge N_2 $,
$$
\|   \Phi  (t_n  ,  \tau -t_n  , \theta_{2, -t_n} \omega,  u_{0,n}    )  \| _{H^1( {Q}_{K } )
} \le  L(\tau, \omega),
$$
where $L(\tau, \omega)$ is the constant given by  the right-hand side
of \eqref{lemrde5_a1}.
By the compactness of embedding
$ H^1( {Q}_{K } )  \hookrightarrow  L^2 ( {Q}_{K } )$,
 the sequence $   \Phi  (t_n  ,  \tau -t_n  , \theta_{2, -t_n} \omega,  u_{0,n}    ) $ is precompact in
$L^2 ( {Q}_{K } )$, and  hence  it has 
  a finite covering in
$L^2 ( {Q}_{K } ) $ of
balls of radii less than ${\frac \eta{2}}$.
This  and    \eqref{pacrde_1}
imply  that $     \Phi  (t_n  ,  \tau -t_n  , \theta_{2, -t_n} \omega,  u_{0,n}    )    $ has a finite covering in
$ \ltwo   $ of balls of radii less than $\eta$, as desired.
  \end{proof}

So far,   we have proved   the $\cald_\lambda$-pullback 
asymptotic compactness and the existence of 
  a closed measurable $\cald_\lambda$-pullback  absorbing set
  of $\Phi$. Thus the existence of a
  $\cald_\lambda$-pullback attractor  of $\Phi$  follows  from
  Proposition \ref{app5} immediately.

 \begin{thm}
\label{thmrde1}
 Suppose  \eqref{f1}-\eqref{f4}  and \eqref{gcond1}
  hold.
 Then  the cocycle $\Phi$ associated with problem \eqref{41}-\eqref{42}
 has a unique $\cald_\lambda$-pullback attractor $\cala \in \cald_\lambda$
 in $\ltwo$  whose structure  is characterized  by
 \eqref{app5_a1}-\eqref{app5_a3}.
\end{thm}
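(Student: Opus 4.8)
The plan is to deduce the theorem directly from Proposition \ref{app5}, applied with $\Omega_1=\R$ (equipped with the shift group \eqref{shift1}) and $\Omega_2=\Omega$, so that the only work is to verify the three hypotheses of that proposition for the cocycle $\Phi$ defined by \eqref{rdphi} and the collection $\cald_\lambda$ given by \eqref{Dlambda}: namely that $\cald_\lambda$ is neighborhood closed, that $\Phi$ is $\cald_\lambda$-pullback asymptotically compact in $\ltwo$, and that $\Phi$ has a closed $\cald_\lambda$-pullback absorbing set, measurable with respect to the $P$-completion $\bar{\calf}$ of $\calf$, which itself lies in $\cald_\lambda$. (One also recalls that condition (i) of Definition \ref{ds1} for $\Phi$, i.e. the joint measurability in $(\omega, u_\tau)$, was already checked when $\Phi$ was constructed from the solution operator of \eqref{v1}.)

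First, the neighborhood closedness of $\cald_\lambda$ is immediate from the defining tempered-type condition \eqref{attdom1}: replacing each $D(\tau,\omega)$ by a fixed-radius neighborhood multiplies $e^{\lambda s}\|D(\tau+s,\theta_{2,s}\omega)\|^2$ by a bounded factor and adds a term proportional to $e^{\lambda s}$, both of which still tend to $0$ as $s\to-\infty$; this is exactly the assertion already stated just after \eqref{Dlambda}. Second, the $\cald_\lambda$-pullback asymptotic compactness of $\Phi$ in $\ltwo$ is precisely Lemma \ref{acrde}, which in turn rests on the uniform tail estimate of Lemma \ref{lemrde6} together with the $\hone$-bound of Lemma \ref{lemrde5} and the compact embedding $H^1(Q_K)\hookrightarrow L^2(Q_K)$ on bounded balls. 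Third, I would build the absorbing set as in the paragraph preceding Lemma \ref{acrde}: for $\tau\in\R$, $\omega\in\Omega$ set $K(\tau,\omega)=\{u\in\ltwo:\ \|u\|^2\le L(\tau,\omega)\}$, where $L(\tau,\omega)$ is the right-hand side of \eqref{lemrde5_a1}. Each $K(\tau,\omega)$ is a closed nonempty (bounded) subset of $\ltwo$; the map $\omega\mapsto L(\tau,\omega)$ is $(\calf,\calb(\R))$-measurable, since $z(\omega)$, $z(\theta_{2,s}\omega)$ and the $e^{\lambda s}$-weighted Lebesgue integrals depending on them are measurable, so the set-valued map $\omega\mapsto K(\tau,\omega)$ is measurable, a fortiori with respect to $\bar{\calf}$. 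Using that $|z(\omega)|$ is tempered and examining the explicit form of $L$, one checks $\lim_{r\to-\infty}e^{\lambda r}\|K(\tau+r,\theta_{2,r}\omega)\|^2=0$, so $K=\{K(\tau,\omega):\tau\in\R,\ \omega\in\Omega\}\in\cald_\lambda$; and Lemma \ref{lemrde5} provides, for every $D\in\cald_\lambda$, a time $T=T(\tau,\omega,D)\ge 1$ beyond which $\Phi(t,\tau-t,\theta_{2,-t}\omega,D(\tau-t,\theta_{2,-t}\omega))\subseteq K(\tau,\omega)$, i.e. $K$ is a $\cald_\lambda$-pullback absorbing set in $\cald_\lambda$.

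With these three facts established, Proposition \ref{app5} applies verbatim and delivers a unique $\cald_\lambda$-pullback attractor $\cala\in\cald_\lambda$ for $\Phi$ in $\ltwo$, together with the characterizations \eqref{app5_a1}, \eqref{app5_a2} and \eqref{app5_a3} (the $\Omega$-limit-set formula, the description via $\cald_\lambda$-complete orbits in the sense of Definition \ref{comporbit}, and the description via $\cald_\lambda$-complete quasi-solutions in the sense of \eqref{nonau22}), which is the statement of the theorem. I do not expect any genuine analytic obstacle at this stage: all the substantive estimates are carried out in Lemmas \ref{lemrde1}, \ref{lemrde3}, \ref{lemrde5}, \ref{lemrde6} and \ref{acrde}. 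The only points that need a little care are the two bookkeeping verifications for $K$ — that $K\in\cald_\lambda$ (which uses temperedness of $z$) and that $K$ is $\bar{\calf}$-measurable — and both are routine.
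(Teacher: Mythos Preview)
Your proposal is correct and follows essentially the same approach as the paper: the paper's proof is the single sentence that the $\cald_\lambda$-pullback asymptotic compactness (Lemma \ref{acrde}) and the existence of a closed measurable $\cald_\lambda$-pullback absorbing set (constructed in the paragraph preceding Lemma \ref{acrde}) have already been established, so the result follows from Proposition \ref{app5}. Your write-up simply spells out these verifications in slightly more detail.
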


We now  consider the case where the non-autonomous deterministic forcing $g: \R \to \ltwo$
is in $ L^2_{loc}(\R, \ltwo)$
and  is   periodic  
with period $T>0$.  In this case,
condition \eqref{gcond1} is fulfilled, and 
for every $t \ge 0$, $\tau \in \R$ and $\omega \in \Omega$,
   we  have from \eqref{rdphi}   that 
$$
\Phi (t, \tau +T, \omega, \cdot)
=v(t+\tau +T, \tau +T, \theta_{2, -\tau-T} \omega,\  \cdot -h z(\omega)) + hz(\thtwot \omega)$$
$$
=v(t+\tau , \tau , \theta_{2, -\tau} \omega,\  \cdot -h z(\omega)) + hz(\thtwot \omega)
= \Phi (t, \tau,  \omega, \cdot). $$
This  shows   that  $\Phi$ is  periodic with period  $T$ by Definition \ref{periodic-ds1}. 
Let $D \in \cald_\lambda$ and $D_T$  be the $T$-translation
of $D$.   Then by \eqref{attdom1} and \eqref{Dlambda} we have,  for every $r \in \R$   and $\omega \in \Omega$,
\be\label{tranrde1}
\lim_{s \to -\infty}
e^{\lambda s} \| D(r+s, \theta_{2, s} \omega )\|^2 =0.
\ee
In particular, for $r = \tau +T$   with $\tau \in \R$,  we get
from \eqref{tranrde1}  that 
\be\label{tranrde2}
\lim_{s \to -\infty}
e^{\lambda s} \| D_T(\tau +s, \theta_{2, s} \omega )\|^2  
=
\lim_{s \to -\infty}
e^{\lambda s} \| D(\tau +T+s, \theta_{2, s} \omega )\|^2 =0.
\ee
From \eqref{tranrde2}  we see   that
$D_T \in \cald_\lambda$,    and hence
$\cald_\lambda$  is $T$-translation closed.
By the same argument, we also see   that
$\cald_\lambda$  is $-T$-translation closed.
Thus,     from   Lemma \ref{Tlation2} we find 
that  $\cald_\lambda$  is $T$-translation invariant.
Applying Proposition \ref{app6} to problem \eqref{41}-\eqref{42},   we get the periodicity of the 
$\cald_\lambda$-pullback attractor of $\Phi$
as stated below.
Note that the periodicity of the  attractor can also be obtained
by Theorem  \ref{periodatt2}.

 \begin{thm}
\label{thmrde2}
Let $g: \R \to \ltwo$  be    periodic  
with period $T>0$ and 
belong   to  $ L^2 ((0,T), \ltwo)$.
 Suppose  \eqref{f1}-\eqref{f4}   hold.
 Then  the cocycle $\Phi$ associated with problem \eqref{41}-\eqref{42}
 has a unique  periodic $\cald_\lambda$-pullback attractor $\cala \in \cald_\lambda$
 in $\ltwo$  whose structure  is characterized  by
 \eqref{app5_a1}-\eqref{app5_a3}.
\end{thm}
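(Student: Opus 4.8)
The plan is to verify that all hypotheses of Proposition \ref{app6} hold for the cocycle $\Phi$ with $\cald=\cald_\lambda$, and then invoke it. The first point is that periodicity plus local square-integrability of $g$ already force condition \eqref{gcond1}: since $s\mapsto\|g(\cdot,s)\|^2$ is $T$-periodic and integrable over one period while $e^{\lambda s}$ decays geometrically as $s\to-\infty$, the contribution of each period to $\int_{-\infty}^\tau e^{\lambda s}\|g(\cdot,s)\|^2\,ds$ is bounded by a geometric term, so the integral is finite for every $\tau\in\R$. Hence \eqref{f1}--\eqref{f4} together with the hypotheses on $g$ imply all the assumptions of Theorem \ref{thmrde1}; in particular $\Phi$ already has a unique $\cald_\lambda$-pullback attractor $\cala\in\cald_\lambda$ in $\ltwo$ whose structure is given by \eqref{app5_a1}--\eqref{app5_a3}. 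It then only remains to establish periodicity of $\cala$.

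Next I would collect the three structural facts recorded in the discussion preceding the statement. First, $\Phi$ is a continuous $T$-periodic cocycle: this follows from the explicit formula \eqref{rdphi} for $\Phi$ together with $g(x,t+T)=g(x,t)$, since the $v$-flow in \eqref{v1} depends on the forcing only through its values on the pertinent time interval and the Ornstein--Uhlenbeck term $z$ is unaffected by the shift, so that $\Phi(t,\tau+T,\omega,\cdot)=\Phi(t,\tau,\omega,\cdot)$. Second, $\cald_\lambda$ is neighborhood closed, as noted right after \eqref{Dlambda}. Third, $\cald_\lambda$ is $T$-translation invariant: the tempering condition \eqref{attdom1} defining membership in $\cald_\lambda$ is preserved under translation by both $T$ and $-T$, so $\cald_\lambda$ is $T$-translation closed and $(-T)$-translation closed, and Lemma \ref{Tlation2} then gives invariance.

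Finally I would recall the two analytic inputs already proved in this section: Lemma \ref{lemrde5} provides a closed $\cald_\lambda$-pullback absorbing set $K$ for $\Phi$, whose radius function $L(\tau,\cdot)$ is $(\calf,\calb(\R))$-measurable and which belongs to $\cald_\lambda$ because $|z(\omega)|$ is tempered; and Lemma \ref{acrde} gives the $\cald_\lambda$-pullback asymptotic compactness of $\Phi$ in $\ltwo$ (this is where the tail estimates of Lemma \ref{lemrde6} and the compact embedding $H^1(Q_K)\hookrightarrow L^2(Q_K)$ enter). With the neighborhood closedness and $T$-translation invariance of $\cald_\lambda$, the $T$-periodicity of $\Phi$, the $\cald_\lambda$-pullback asymptotic compactness, and the closed measurable $\cald_\lambda$-pullback absorbing set all in hand, Proposition \ref{app6} applies directly and produces a unique periodic $\cald_\lambda$-pullback attractor of period $T$; its characterization is the one already supplied by Theorem \ref{thmrde1}. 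Alternatively, noting that $K$ is itself $T$-periodic because $L(\tau+T,\omega)=L(\tau,\omega)$ by periodicity of $g$, one may instead appeal to Theorem \ref{periodatt2} to reach the same conclusion, which is the remark made after the statement.

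I do not expect any serious obstacle: the theorem is essentially a repackaging of results already established. The only points needing genuine (though routine) checking are that $g\in L^2_{loc}(\R,\ltwo)$ periodic really does imply \eqref{gcond1}, so that Theorem \ref{thmrde1} and Lemmas \ref{lemrde5}, \ref{acrde} are legitimately applicable even though \eqref{gcond1} is not listed among the hypotheses of Theorem \ref{thmrde2}, and that the tempering condition \eqref{attdom1} is indeed stable under translation by $\pm T$, which is the fact feeding Lemma \ref{Tlation2}.
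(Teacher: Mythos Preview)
Your proposal is correct and follows essentially the same route as the paper: verify that periodicity of $g$ in $L^2_{loc}$ gives \eqref{gcond1}, check $T$-periodicity of $\Phi$ from \eqref{rdphi}, confirm $\cald_\lambda$ is neighborhood closed and $T$-translation invariant via Lemma \ref{Tlation2}, and then invoke Proposition \ref{app6} (with Theorem \ref{periodatt2} as an alternative). The paper's argument is exactly this, with the analytic inputs supplied by Lemma \ref{lemrde5} and Lemma \ref{acrde} as you describe.
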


As    discussed in Section \ref{sec31},  the pullback  attractors
of problem \eqref{41}-\eqref{42}  can also  be investigated by constructing a cocycle over the set $\Omega_g$ of all translations of  the function $g$.
Given $ t \ge  0$,  $\tau \in \R$, $\omega \in \Omega$ and
$u_0 \in \ltwo$,  let 
$u(t, 0, g^\tau, \omega, u_0)$  be the solution of
equation \eqref{41} 
 with $g$  being   replaced by
$g^\tau$.  Here, $u_0$ is the initial condition of $u$ at
$t =0$.   
Suppose $g: \R \to \ltwo$ is not a periodic  function.
Then define a map 
${\tilde{\Phi}}: \R^+ \times \Omega_g \times \Omega\times \ltwo \to \ltwo$ by
 ${\tilde{\Phi}} (t, g^\tau, \omega, u_0)
= u(t, 0, g^\tau, \omega, u_0)$
for  all  $ t \ge  0$,  $\tau \in \R$, $\omega \in \Omega$ and
$u_0 \in \ltwo$.
One can check  that ${\tilde{\Phi}}$ is a continuous cocycle
in $\ltwo$  
over $(\Omega_g,  \{\thonet\}_{t \in \R})$
and
$(\Omega, \calf, P,  \{\thtwot\}_{t \in \R})$,
where  $ \{\thonet\}_{t \in \R} $ is given by
\eqref{shift2}.
Actually,  ${\tilde{\Phi}} (t, g^\tau, \omega, \cdot)
=
{ {\Phi}} (t, \tau, \omega, \cdot)$  for all
$t \ge 0$, $\tau \in \R$ and $\omega \in \Omega$.
Let  ${\tilde{\cald}}_\lambda$  be    the collection of all
families   ${\tilde{D}} =\{ {\tilde{D}} (g^\tau, \omega):
g^\tau \in \Omega_g, \omega \in \Omega\}$ such that
for  every $  g^\tau \in \Omega_g$
and $\omega \in \Omega$,
$$
\lim_{s \to -\infty}
e^{\lambda s} \| {\tilde{D}} (g^{\tau + s}, \theta_{2,s} \omega )\|^2 =0. 
$$
Then, by Proposition \ref{app7} we can show   that
${\tilde{\Phi}}$ has a unique ${\tilde{\cald}}_\lambda$-pullback attractor ${\tilde{\cala}}$
$=\{ {\tilde{\cala}} (g^\tau, \omega): g^\tau \in \Omega_g, \omega \in \Omega \}  \in {\tilde{\cald}}_\lambda$.
Actually, in this case, we have
${\tilde{\cala}} (g^\tau, \omega) =
{ {\cala}} (\tau, \omega)$  for all
$\tau \in \R$   and $\omega \in \Omega$, where
$\cala$ is the  $\cald_\lambda$-pullback attractor of $\Phi$. 

If $g$  is a periodic function with minimal period $T>0$, then define 
${\tilde{\cald}}_\lambda$  be    the collection of all
families   ${\tilde{D}} =\{ {\tilde{D}} (g^\tau, \omega):
g^\tau \in \Omega_g^T, \omega \in \Omega\}$ such that
for  each $  g^\tau \in \Omega_g^T$
and $\omega \in \Omega$,
$$
\lim_{s \to -\infty}
e^{\lambda s} \| {\tilde{D}} (\theta_{1,s} g^{\tau }, \theta_{2,s} \omega )\|^2 =0,
$$
where $\{\theta_{1,t}\}_{t \in \R}$ is given by
\eqref{shift3}. 
By Proposition \ref{app8}, one can prove that
${\tilde{\Phi}}$ has a unique ${\tilde{\cald}}_\lambda$-pullback attractor ${\tilde{\cala}}$
$=\{ {\tilde{\cala}} (g^\tau, \omega): g^\tau \in \Omega_g^T, \omega \in \Omega \}$. 
In this case, we also have
${\tilde{\cala}} (g^\tau, \omega) =
{ {\cala}} (\tau, \omega)$  for all
$\tau \in [0, T)$   and $\omega \in \Omega$, where
$\cala$ is the  periodic  $\cald_\lambda$-pullback attractor of $\Phi$.


\begin{thebibliography}{99}

\bibitem{arn1}
L. Arnold, {\em Random Dynamical Systems}, Springer-Verlag, 1998.


\bibitem{aubin1}
J.P. Aubin and H. Franskowska,
{\em Set-Valued Analysis}, Birkhauser, Bosten, Basel, Berlin, 1990.
   
  

\bibitem{bab1}
A.V. Babin and M.I. Vishik,
{\em Attractors of Evolution Equations}, North-Holland,
Amsterdam, 1992.

 


\bibitem{bal2}
J.M.   Ball,  Continuity properties and global
attractors of generalized semiflows and the Navier-Stokes
equations, {\em  J. Nonl. Sci.},  {\bf 7} (1997),
475-502.


\bibitem{bat1}
P.W. Bates,  H. Lisei and  K.  Lu,
 Attractors for stochastic lattice
 dynamical systems,
{\em Stoch. Dyn.},   {\bf 6}  (2006),      1-21.




\bibitem{bat2}
P.W. Bates,   K.  Lu   and B. Wang,
 Random attractors for  stochastic reaction-diffusion equations
on unbounded domains,  {\em J. Differential Equations},
 {\bf  246}   (2009),   845-869.

 
 


\bibitem{car1} T. Caraballo, H. Crauel,  J. A. Langa and J. C. Robinson,
 The effect of noise on the Chafee-Infante equation: a nonlinear case study,
 {\em Proceedings of AMS},  {\em 135} (2007),
 373-382.


\bibitem{car2}
T. Caraballo, J. Real, I.D. Chueshov,
 Pullback attractors for stochastic heat
 equations in materials with memory,
  {\em Discrete Continuous Dynamical Systems B},
  {\bf 9 }  (2008),   525-539.


 
\bibitem{car3}
T. Caraballo, G. Lukaszewicz and J. Real,
Pullback attractors for asymptotically compact
non-autonomous dynamical systems,
{\em Nonlinear Anal.},  {\bf 64} (2006), 484-498.


\bibitem{car4}
T. Caraballo, G. Lukaszewicz and J. Real,
Pullback attractors for
non-autonomous 2D-Navier-Stokes equations
in some unbounded domains,
{\em  C. R. Acad. Sci. Paris I},  {\bf 342} (2006), 263-268.



 \bibitem{carva1}
 A. N. Carvalho   and   J. A. Langa
 Non-autonomous perturbation of autonomous semilinear differential equations:
 continuity of  local stable and unstable manifolds,
 {\em J. Differential Equations}, {\bf 233} (2007),   622-653.

 \bibitem{carva2}
 A. N. Carvalho   and   J. A. Langa,
 An extension of the concept of gradient semigroups
which is stable under perturbation,
{\em J. Differential Equations}, {\bf 246} (2009),   2646-2668.



\bibitem{che1}
V.V. Chepyzhov and M.I. Vishik,  Attractors of non-autonomous dynamical
systems and their dimensions,
{\em J. Math. Pures. Appl.}, {\bf 73} (1994), 279-333.



\bibitem{chu2}
I. Chueshov and M. Scheutzow,
On the structure of attractors  and invariant measures for a class of
monotone  random systems,
{\em Dynamical Systems}, {\bf 19} (2004), 127-144.





    \bibitem{cra1}
    H. Crauel,  A. Debussche and
    F. Flandoli, Random attractors,
    {\em J. Dyn. Diff. Eqns.}, {\bf 9} (1997), 307-341.


    \bibitem{cra2}
    H. Crauel  and
    F. Flandoli,  Attractors for random dynamical systems,
    {\em Probab. Th. Re. Fields}, {\bf 100} (1994), 365-393.

\bibitem{dua1}
J. Duan and B. Schmalfuss,
 The 3D quasigeostrophic fluid dynamics
  under random forcing on boundary,
  {\em Comm. Math. Sci.},
  {\bf 1} (2003),  133-151.


     \bibitem{fla1}
    F. Flandoli and B. Schmalfu$\beta$,
Random attractors for
the 3D stochastic Navier-Stokes equation with multiplicative
noise,
    {\em Stoch. Stoch. Rep.}, {\bf 59} (1996),  21-45.


\bibitem{gamma1}
L. Gammaitoni, P. Hanggi, P. Jung and F. Marchesoni,
Stochastic resonance,
{\em Reviews of Modern Physics}, {\bf 70} (1998), 223-287.




 \bibitem{hal1}
  J.K.  Hale,   Asymptotic Behavior of
Dissipative Systems,
American Mathematical Society,
  Providence, RI, 1988.

     

\bibitem{huang1}
J. Huang and W. Shen,
Pullback attractors for nonautonomous and random parabolic equations on non-smooth domains,
{\em Discrete and Continuous Dynamical Systems},
{\bf 24} (2009),   855-882.





\bibitem{kloe1}
P.E. Kloeden and J.A. Langa,
 Flattening, squeezing and the existence of
random attractors,
{\em Proc. Royal Soc. London Serie A.}, {\bf 463}  (2007), 163-181.



\bibitem{moss1}
F. Moss,  D.  Pierson  and D.  O'Gorman,
 Stochastic resonance:  tutorial and update,
  {\em Internat. J. Bifur. Chaos Appl. Sci. Engrg.}
  {\bf  4 } (1994),  1383-139



\bibitem{schm1}
B.  Schmalfu$\beta$, Backward cocycles  and attractors  of stochastic differential equations,
{\em International Seminar on Applied Mathematics-Nonlinear Dynamics: Attractor Approximation and Global Behavior},  1992,  185-192.


   
  \bibitem{sel1}
 R. Sell and  Y. You,
 Dynamics of Evolutionary Equations,
 Springer-Verlag,
New York, 2002.
  

\bibitem{tem1}
 R.  Temam,    Infinite-Dimensional Dynamical
Systems in Mechanics and Physics,
  Springer-Verlag,
  New York, 1997.




\bibitem{tuckwell1}
H.C. Tuckwell and J. Jost,
The effects of various spatial distributions of weak noise
on rhythmic spiking,
{\em J. Comput. Neurosci.}, {\bf 30} (2011), 361-371.



\bibitem{tuckwell2}
H.C. Tuckwell,
{\em Introduction to Theoretical Neurobiology: Volume 2, Nonlinear
and Stochastic Theories}, Cambridge University Press, Cambridge, 1998.



 \bibitem{wan1}
 B. Wang,  Attractors for reaction-diffusion equations in
   unbounded domains,
 {\em Physica D}, {\bf 128}  (1999), 41-52.


 \bibitem{wan2}
 B.  Wang,
Asymptotic behavior of stochastic wave equations with critical
exponents on $\R^3$,  
{\em  Transactions of  American Mathematical Society}, 
{\bf 363} (2011), 3639-3663.
 
 
 \bibitem{wan3}
 B. Wang,
 Random Attractors for the Stochastic Benjamin-Bona-Mahony Equation on Unbounded Domains,
{ \em J. Differential Equations},  {\bf 246} (2009), 2506-2537.

\bibitem{wan31}
B. Wang, 
Random attractors for the stochastic FitzHugh-Nagumo
 system on unbounded domains, 
 {\em Nonlinear Analysis TMA}, 
 {\bf 71 } (2009),  2811-2828.
 
 \bibitem{wan32}
 B.  Wang,
  Periodic random attractors for stochastic 
  Navier-Stokes equations on unbounded 
  domains, 
  {\em Electronic J.  Differential Equations},
  {\bf  2012}  (2012),  No.  59, 1-18. 
  
  \bibitem{wan33}
  B. Wang,
  Existence, stability and bifurcation of random 
  periodic solutions
  of stochastic parabolic equations,
  submitted.

\bibitem{wan4}
      B. Wang,
      Pullback attractors for   non-autonomous
      reaction-diffusion equations on  ${\mathbb R}^n$,
      {\em Frontiers of Mathematics in China},  {\bf 4} (2009),
      563-583.


\bibitem{wiesen1}
K.  Wiesenfeld,  D.  Pierson,
 E. Pantazelou, C. Dames and F.  Moss,
 Stochastic resonance on a circle,
 {\em  Phys. Rev. Lett.} {\bf  72} (1994),  2125-2129.



\bibitem{wiesen2}
K. Wiesenfeld and F. Moss,
Stochastic resonance and the benefits of noise: from ice ages to
crayfish and SQUIDs,
{\em Nature}, {\bf 373} (1995), 33-35.
  

 \end{thebibliography}
\end{document}